\numberwithin{equation}{section}
\newtheorem{theorem}{Theorem}[section]
\newtheorem{lemma}[theorem]{Lemma}
\newtheorem{assumption}{Assumption}[section]
\theoremstyle{definition}
\newtheorem{definition}{Definition}[section]
\newtheorem{example}{Example}[section]
\theoremstyle{remark}
\newtheorem{remark}{Remark}[section]
\title{Individual molecules dynamics in reaction network models}
\author{Daniele Cappelletti\footnotemark[1] \and Grzegorz A.\ Rempala\footnotemark[2]}
\date{}
\newcommand{\RR}{\mathbb{R}}
\newcommand{\ZZ}{\mathbb{Z}}
\newcommand{\Sp}{\mathcal{X}}
\newcommand{\C}{\mathcal{C}}
\newcommand{\Rc}{\mathcal{R}}
\newcommand{\G}{\mathcal{G}}
\newcommand{\A}{\mathcal{A}}
\newcommand{\track}{\mathcal{T}}
\newcommand{\tracksub}{\overline{\Sp}}
\newcommand{\trackS}{\tau}
\newcommand{\trackRc}{\widetilde{\Rc}}
\newcommand{\tr}[1]{\widetilde{#1}}
\newcommand{\cen}[1]{\overline{#1}}
\DeclarePairedDelimiter\floor{\lfloor}{\rfloor}
\DeclareMathOperator{\supp}{supp}
\DeclareMathOperator{\proj}{\pi}
\newcommand{\edit}[1]{{#1}}
\begin{document}

\footnotetext[1]{Politecnico di Torino, Torino, Italy (\texttt{daniele.cappelletti@polito.it}).}
\footnotetext[2]{The Ohio State University, Columbus, Ohio, USA (\texttt{rempala.3@osu.edu}).}

\maketitle

% REQUIRED
\begin{abstract} In a stochastic reaction network setting we consider the problem of tracking the fate of individual molecules. 
We show that   using   the  classical large volume limit results, we may approximate the dynamics of a single tracked molecule in a simple and computationally  efficient  way. We give examples on how this approach may be used  to obtain  various  characteristics of  single-molecule dynamics (for instance, the distribution of the number of infections in a single individual in the course of an epidemic or  the activity time of a single enzyme molecule). Moreover, we show how to approximate the overall dynamics of species of interest in the full system with a collection of independent single-molecule trajectories, and give explicit bounds for the approximation error in terms of the reaction rates. This approximation, which is well defined for all times, leads to an efficient and fully parallelizable simulation technique for which we provide some numerical examples. 
\end{abstract}

% REQUIRED
% \begin{keywords}
%   Single-molecule dynamics,  mathematical epidemiology, law of large numbers, Poisson process representation, 
%    stochastic approximation, dynamical survival analysis,  Skorokhod topology
% \end{keywords}
% 
% % REQUIRED
% \begin{AMS}
%   	60J28, 92C40, 92C42,   60F05
% \end{AMS}

\section{Introduction}
Recent advances in modeling molecular systems, especially our improved ability to  track  individual proteins, and  the deluge  of data from the  observations of both molecular and macro system (think,  for instance, of the ongoing  COVID-19 pandemic),  have created new scientific challenges  of considering models of  very high resolution where the dynamics of a specific  bio-molecule or a particular individual   are of interest.  In general,  such 'agent-based' models   are known to be   computationally very costly, due to complex stochastic dynamics and highly noisy behavior of individual agents.  However,  it appears that, at least in some cases, simple  yet satisfactory approximation of individual molecular trajectory  may  be directly inferred with the  help  of a  classical approach of  stochastic chemical kinetics that assumes that all molecules or individuals are indistinguishable and consequently focuses only on  their aggregated counts.  As an example of  one such idea, originally proposed in \cite{caleb20} and latter expanded in \cite{wasiur20}, consider the   stochastic 'susceptible-infected'  ($SI$) chemical reaction network  where  a collection of $m+n$ molecules (or individuals) is  partitioned into  two types:  susceptible ($S$) and infected ($I$) with initially $n$ being of type   $S$ and remaining $m$ of type $I$.  The stochastic network evolves in time according to a Markov jump process that  counts the 'infection events', that is, the  interactions of  one molecule of $I$-type with  one molecule of $S$-type. Each such interaction  creates  a new molecule of  $I$-type  and  removes one of  $S$-type (equivalently, a molecule changes its type from $S$ to $I$). Accordingly, in the reaction network notation described below in Section~\ref{ssec:rnt} this model   may be represented as 
\begin{equation}\label{eq:SIint}
    S+I\ce{->}  2I.
\end{equation}
If the rate constant of the above reaction is $\beta/n$ and we assume the usual  mass action kinetics \cite{AK:2015}, it is well know that the  above stochastic reaction network satisfies the law of large numbers, in the sense that as $m,n\to \infty$ and $m/n\to \rho>0$ the surviving  proportion  $s_t$  of the $S$-type molecules  follows  the  logistic  equation that may be written in the form 
\begin{equation}\label{eq:s_t}
-\dot{s}_t/s_t=\beta (1+\rho-s_t)\qquad s_t(0)=1.
\end{equation} 
Consequently,  for $t\ge 0$ we have 
\begin{equation}\label{eq:surv}s_t= \frac{1+\rho}{1+\rho\exp(\beta(1+\rho)t)}.
\end{equation}  Thus,  from the viewpoint of  a single, randomly selected  $S$-type molecule,  the quantity $s_t$ defines a {\em survival function} describing the    limiting  {\em probability}  of  surviving beyond time $t>0$. The formula \eqref{eq:surv} led to the method of approximating the distribution of surviving molecules of $S$ dubbed  `dynamical survival analysis' (DSA)   described in  \cite{wasiur20} and applied recently to epidemic modeling \cite{mat21,prison21,di2022dynamic,hosp22,israel22}.  The  idea  is  further  illustrated in   Figure~\ref{fig:si} where the average of the Markov  process \eqref{eq:SIint} is compared to  the average of independent realizations of  single molecule dynamics (which may be efficiently calculated using modern  parallel computing capabilities).   Note 
 \eqref{eq:s_t} may be also interpreted as the equation for the  {\em hazard function} associated with $s_t$. This fact  has some relevance  for statistical inference, and is further exploited, for instance,   in  \cite{wasiur20,di2022dynamic}.
 \begin{figure}
     \centering
     \includegraphics[width=\textwidth]{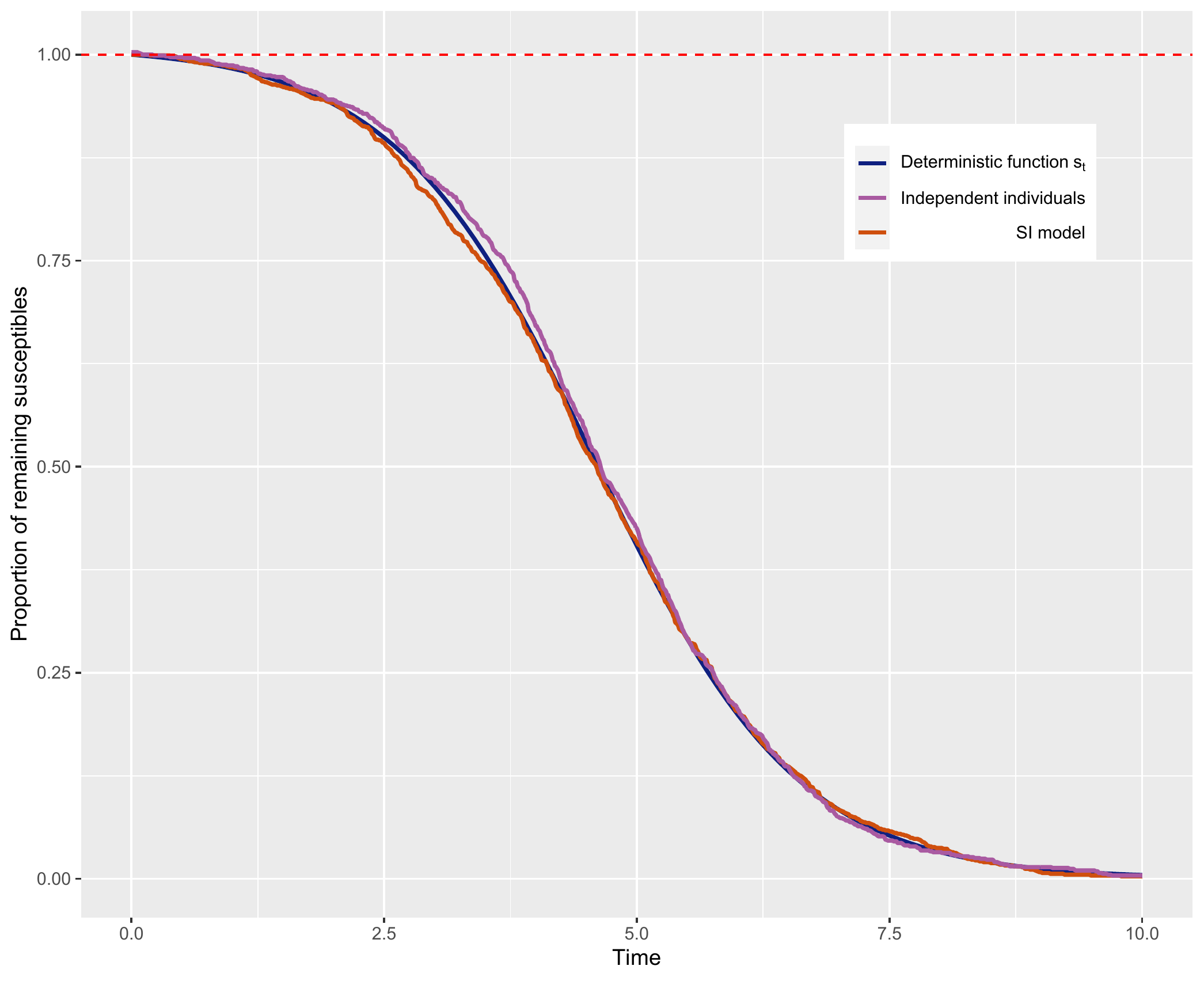}
     \caption{\textbf{Survival approximation  in the SI model.} The empirical trajectory of the proportion of the remaining $S$ molecules in the SI model described in \eqref{eq:SIint} as compared to the deterministic function $s_t$ defined in \eqref{eq:s_t} and the average of $1,000$ independent single trajectories of individuals who become infected according to $s_t$. For the simulation we considered $n=1,000$, $m=10$, $\beta=1$, and $\rho=0.01$.}
     \label{fig:si}
 \end{figure}
 
Beyond the simple $SI$ example, the DSA approach has been applied (mostly in the context of epidemics) only to a handful of reaction networks  representing the so-called one-directional transfer models \cite{caleb20}. In all such  networks  individual molecules can   only change their state in an ordered way, hence previously visited states are  no longer attainable (for instance in  the  $SI$ model a  molecule of $S$-type  can only change into $I$-type, but not vice-versa).   

In the  current paper we  formally expand  the survival function approach  for   tracking the fate of individual  molecules to  a  much broader class of networks,  including those where  molecules  can return to their previous stages.  A simple  example is  obtained by augmenting the $SI$  network with  the additional  reaction $I\to S$, leading  to the so-called $SIS$ model (which is of interest in epidemiology) discussed in more detail in Example~\ref{ex:SI} below.   To establish our results for such networks,   we explore a different   representation of the DSA approximation,  which  does not explicitly involve the  survival function.   Continuing with the $SI$ model example,    denote by $Y^i(t)$  the binary variable  that takes value 1 or 0    according to whether $i$-th  molecule is of type $S$ or $I$.  The limit dynamics of an  $i$-th individual molecule (initially of type $S$)  is then given by 
\begin{equation*}\label{eq:surv2} 
Y^i(t) =1- N^i\left(\beta\int_0^t Y^i(u)(1+\rho-s_u) du \right)
\end{equation*}
where  $N^i$  is the unit Poisson process tracking the transition of the $i$-th molecule from $S$-type  to $I$-type. Note that the argument of $N^i$  is the cumulative hazard corresponding to integral of the right-hand side of \eqref{eq:s_t} (see \cite{wasiur20}).  Such   Poisson process representation is of course completely equivalent to simply having 
the time of switching of the $i$-th molecule from $S$ to $I$ follow  the survival function  \eqref{eq:surv}, but it allows for a description of more complex scenarios than one-directional transfer models. For example, we will prove below that the limit dynamics of a single molecule in the $SIS$ model can be written as
\begin{equation*} 
Y^i(t) =1- N^i_1\left(\beta\int_0^t Y^i(u)(1+\rho-s_u) du \right)+ N^i_2\left(\kappa\int_0^t (1-Y^i(u)) du \right)
\end{equation*}
for independent and identically distributed unit-rate Poisson processes $N^i_1$ and $N^i_2$. Here, $\kappa$ is the rate constant of the reaction $I\to S$.

In this work we study the Poisson process representation of the DSA  approximation and give conditions under which it  describes a  single-molecule trajectory of the original network.  In particular, we explicitly derive error bounds of the DSA  approximation, in terms of the underlying reaction network rates. We illustrate via numerical examples how this novel technique could be useful to infer quantities pertaining to single-molecule dynamics (such as the distribution of the number of infections a single individual undergoes in a $SIS$ model, or the time a single enzyme spends in the bound state) in a computationally efficient way.

Further, we consider the  problem of comparing the dynamics of an original full reaction network with that of a collection of independent approximations of single-molecule trajectories and provide  explicit bounds on the error. Having the dynamics of the whole system approximated by a number of independent trajectories allows for computationally efficient simulation techniques, that are fully parallelizable. Moreover, since the DSA approximation is defined for all times, it does not suffer from the problem of exiting the state space as it is known to happen in other methods such as diffusion approximations or tau leaping \cite{diffusion, diffusion2, tau, tau2}. Finally, the independence of the single-molecule trajectories also allows for much simplified statistical inferential procedures. Such applications were already considered in the context of SIR networks in recent papers on the COVID-19 pandemic \cite{prison21,mat21,di2022dynamic,hosp22,israel22}. A thorough investigation of these techniques in general reaction networks is currently being conducted and will appear in a future work.

The paper is organized as follows: in Section~\ref{sec:background}  we provide  the necessary concepts pertaining to 
reaction network theory followed by  the result on the approximation in classical scaling in  Section~\ref{sec:cs}. In Section~\ref{sec:track}  we give a formal definition of what we refer to as \edit{ `status' of the molecules of interest}. In  Section~\ref{sec:results} we state our main results. In particular, in Section~\ref{sec:cssm} we give the theorem on the Poisson process representation of the DSA approximation for a single-molecule trajectory, and give examples of its applications in Section~\ref{sec:applications_single}.  Finally, in Section~\ref{sec:aggregate}  we  state the  result on the approximation of the original full network via  independent single-molecule trajectories, and give numerical examples. Proofs and explicit error bounds are given in the Appendix~\ref{sec:proof}.

\section{Background definitions}\label{sec:background}

\subsection{Notation}

 We denote by $\RR$, $\RR_{>0}$, and $\RR_{\geq0}$ the real, positive real, and non-negative real numbers, respectively. Similarly, we denote by $\ZZ$, $\ZZ_{\geq1}$, and $\ZZ_{\geq0}$ the real, positive real, and non-negative real numbers, respectively. Given a number $r\in\RR$, we denote by $|r|$ its absolute value, and by $\floor{r}$ the largest $m\in\ZZ$ such that $m\leq r$.
 
 Given a vectors $v\in\RR^n$, we denote its $i$th component by $v_i$, for all $1\leq i\leq n$. We further denote
 \begin{equation*}
     \|v\|_\infty=\max_{1\leq i\leq n} |v_i|\quad\text{and}\quad\floor{v}=(\floor{v_1}, \dots, \floor{v_n}).
 \end{equation*}
 Given two vectors $u,v\in\RR_{\geq0}^n$, we write
 \begin{equation*}
     u^v=\prod_{i=1}^m u_i^{v_i},
 \end{equation*}
 with the convention that $0^0=1$. We also write $u\geq v$ if the inequality holds component-wise. Furthermore, for any vector $v\in\ZZ_{\geq0}^n$, we write
 \begin{equation*}
     v!=\prod_{i=1}^m v_i!\,.
 \end{equation*}
 Given a set $A$, we denote its cardinality  by $\#A$ or, if it leads to no ambiguity, by $\vert A\vert$. 
We assume the reader is familiar with basic notions from stochastic process theory, such as the definition of continuous-time Markov chains and Poisson processes \cite{N:1998}.
 
 Consider a sequence of random variables $\{X_n\}_{n\in\ZZ_{\geq0}}$ and a random variable $X$, all defined on the same probability space and with values in a normed space $(E, \|\cdot\|)$. We say that $X_n$   converges in probability to $X$ if for all $\eta\in\RR_{>0}$
 \begin{equation*}
     \lim_{n\to\infty} P\left(\|X_n-X\|>\eta\right)=0.
 \end{equation*}
 
 Given a topological space $E$ we will denote by $D_E[0,T]$ the set of right-continuous left-bounded functions defined from $[0,T]$ to $E$, endowed with the Skorokhod $J_1$ topology. In particular, we say that the sequence of processes $\{X_n\}$ with sample paths in $D_E[0,T]$ converges in probability to the process $X$ (or simply that $X_n$ converges in probability to $X$) if the Skorokhod distance between $X_n$ and $X$ converges to 0 in probability (for more details, see for example \cite[Chapter 3]{EK:1986}).
 
\subsection{Stochastic reaction networks} \label{ssec:rnt}

%\subsection{Reaction networks}

 A \emph{reaction network} is a triple $\G=\{\Sp, \C, \Rc\}$, where (a) $\Sp$ is an ordered finite sequence of $d$ symbols, called \emph{species}; (b) $\C$ is a finite set of linear combinations of species over $\ZZ_{\geq0}$, called \emph{complexes}; (c) $\Rc$ is a finite set of elements of $\C\times\C$, called \emph{reactions}.
 We assume that no element of the form $(y,y)$ is in $\Rc$, for any complex $y$, even though our results do not depend on this assumption. Following the usual notation of reaction network Theory, we further denote a reaction $(y, y')\in\Rc$ by $y\to y'$. We finally assume that each complex appears in at least one reaction, and that each species has a positive coefficient in at least one complex. Under this assumption and up to ordering of the set of species, a reaction network is uniquely determined by the set $\Rc$, or equivalently by the directed graph $(\C, \Rc)$, called \emph{reaction graph}. As an example, consider the reaction graph
 \begin{equation}\label{example}
 A+B\ce{<=>}2B,\quad B\ce{->} C.
 \end{equation}
 In this case, the associated species are $A$, $B$, and $C$, $\C=\{A+B, 2B, B, C\}$, and $\Rc=\{A+B\to 2B, 2B\to A+B, B\to C\}$.
 
 In this paper we will implicitly identify $\RR^{\vert \Sp\vert}$ with $\RR^d$, and therefore each $S\in\Sp$ with a canonical basis vector of $\RR^d$. With this in mind, the complexes are linear combination of species and can be therefore considered as vectors in $\ZZ_{\geq0}^d$. As an example, if we order the species of \eqref{example} alphabetically, then the complex $A+B$ can be associated with the vector $(1,1,0)$, the complex $2B$ can be associated with $(0,2,0)$, the complex $C$ with $(0,0,1)$, and so on.
 We will tacitly use the identification of complexes with integer vectors throughout the paper. Moreover, for each vector $v\in\RR^d$ and for each species $S\in\Sp$ we denote by $v_S$ the entry of $v$ related to the canonical vector associated with $S$. We further define the \emph{support} of $v$ as $\supp(v)=\{S\in\Sp\,:\,v_{S}>0\}$. As an example, with the species of \eqref{example} alphabetically ordered, the support of $(1,1,0)$ is $\{A,B\}$, the support of $(0,2,0)$ is $\{B\}$, and so on.
 
 %{\color{red}For each $S\in\Sp$, let $\iota(S)$ be the order of the species $S$. It is possible identify each species $S$ with the $\iota(S)-$th vector in the canonical basis of $\RR^d$, whose $\iota(S)-$th component is 1 and whose other components are zero. With this in mind, the complexes can be naturally considered as vectors in $\ZZ_{\geq0}^d$. As an example, if we order the species of \eqref{example} alphabetically, then $\iota(A)=1$, $\iota(B)=2$, and $\iota(C)=3$. The complex $A+B$ can be associated with the vector $(1,1,0)$, the complex $2B$ can be associated with $(0,2,0)$, the complex $C$ with $(0,0,1)$, and so on.
 %We will tacitly use the identification of complexes with integer vectors throughout the paper, and for each vector $v\in\ZZ_{\geq0}^d$ we define its \emph{support} as $\supp(v)=\{S\in\Sp\,:\,v_{\iota(S)}>0\}$. As an example, with the species of \eqref{example} alphabetically ordered, the support of $(1,1,0)$ is $\{A,B\}$, the support of $(0,2,0)$ is $\{B\}$, and so on. Moreover, to ease the notation, for each species $S\in\Sp$ and each vector $v\in\RR^d$ we will simply denote $v_S=v_{\iota(S)}$.}
 
 Deterministic and stochastic dynamical systems can be associated with a reaction network. The stochastic model is usually utilized when few individuals are present, so the stochastic component of the dynamic behaviour should not be ignored. In this case, the time evolution of the number of individuals of the different species is considered, for certain given propensities of the reactions to occur, and modeled via a continuous time Markov chain. More precisely, a \emph{stochastic kinetics} for a reaction network $\G$ is a correspondence between a reaction $y\to y'$ and a \emph{rate function} $\lambda_{y\to y'}\colon \ZZ_{\geq0}^d\to \RR_{\geq0}$, such that $\lambda_{y\to y'}(x)>0$ only if $x\geq y$. A \emph{stochastic reaction system} is a continuous time Markov chain $\{X(t)\,:\,t\geq0\}$ with state space $\ZZ_{\geq0}^d$ and transition rates from a state $x$ to a state $x'$ defined by
 \begin{equation*}
  q(x,x')=\sum_{\substack{y\to y'\in\Rc\\ y'-y=x'-x}}\lambda_{y\to y'}(x).   
 \end{equation*}
 The associated generator is defined by
 \begin{equation*}
     Af(x)=\sum_{y\to y'\in\Rc} \lambda_{y\to y'}(x)\Big(f(x+y'-y)-f(x)\Big)
 \end{equation*}
 for any function $f\colon \ZZ_{\geq0}^d\to \RR$ and any $x\in\ZZ_{\geq0}^d$. Equivalently, the process $X$ can be described by
 \begin{equation*}
     X(t)=X(0)+\sum_{y\to y'\in\Rc} (y'-y)N_{y\to y'}\left(\int_0^\infty \lambda_{y\to y'} (X(s))ds\right),
 \end{equation*}
 where the processes $\{N_{y\to y'}\}_{y\to y'\in\Rc}$ are independent unit-rate Poisson processes. For more details on this representation, we refer to \cite{AK:2015} or \cite[Chapter 6]{EK:1986}.
 
 In the deterministic setting, the concentration of the different species are assumed to evolve according to an ordinary differential equation (ODE). Specifically,
 a \emph{deterministic kinetics} for a reaction network $\G$ is a correspondence between the reactions $y\to y'$ and the \emph{rate function} $\lambda_{y\to y'}\colon \RR_{\geq0}^d\to \RR_{\geq0}$, such that $\lambda_{y\to y'}(x)>0$ only if $x_i> 0$ whenever $y_i>0$. A \emph{deterministic reaction system} is the solution to the ordinary differential equation
 \begin{equation}\label{eq:drn}
  \frac{d}{dt} Z(t)=\sum_{y\to y'\in\Rc}(y'-y)\lambda_{y\to y'}(x).
 \end{equation}

 While our results hold in a more general scenario, all the simulations we show assume \emph{mass-action kinetics}, a popular choice of kinetics derived by the assumption that all the \edit{species molecules} are well-mixed in the available volume \cite{AK:2015}. Specifically, a stochastic reaction system is a \emph{stochastic mass-action system} if for every reaction $y\to y'\in\Rc$ we have
 \begin{equation*}
     \lambda_{y\to y'}(x)=\kappa_{y\to y'}\frac{x!}{(x-y)!}\mathbbm{1}_{\{x\geq y\}},
 \end{equation*}
 for some positive constant $\kappa_{y\to y'}$ called \emph{rate constant}. Similarly, a deterministic reaction system is a \emph{deterministic mass-action system} if for every reaction $y\to y'\in\Rc$ we have
 \begin{equation*}
     \lambda_{y\to y'}(x)=\kappa_{y\to y'}x^y,
 \end{equation*}
 for some positive constant $\kappa_{y\to y'}$ also called \emph{rate constant}.
 
\section{Classical scaling}\label{sec:cs}

Consider a reaction network $\G=\{\Sp, \C, \Rc\}$, and a family of stochastic kinetics $\{\lambda^V_{y\to y'}\,:\, y\to y'\in\Rc\}$ indexed by $V$. Let $X^V$ denote the associated continuous time Markov chain. $V$ should be thought to as a parameter expressing the volume, or the magnitude of the number of the present individuals. Under the following technical but reasonable assumption the classical scaling of \cite{kurtz1972, EK:1986} holds:
\begin{assumption}\label{ass:classical_limit}
 We assume that for any reaction $y\to y'\in \Rc$ there exists a locally Lipschitz function $\lambda_{y\to y'}\colon\RR^d_{\geq0}\to\RR^d_{\geq0}$ such that for any compact set $K\subset\RR^d_{\geq0}$ we have
 \begin{equation*}
  \lim_{V\to \infty}\sup_{z\in K}\left|\frac{\lambda^V_{y\to y'}(\floor{Vz})}{V}-\lambda_{y\to y'}(z)\right|=0.   
 \end{equation*}
 \end{assumption}
% \begin{remark}
%  Any continuously differentiable function is locally Lipschitz (in particular determinisitic mass-action rate functions are). The constant rates for mass-action kinetics are naturally rescaled by increasing the volume $V$, and a limit as that presented in the assumption holds with $\lambda_{y\to y'}$ being the rate functions of deterministic mass-action kinetics \red{expand and explain better}. 
% \end{remark}
% \begin{assumption}\label{ass:classical_bound}
%  We assume that the processes $(X^V)_V$ are uniformly stochastically bounded on compact intervals of time, meaning that for any $\nu>0$ and any $T>0$, there exists a constant $\Gamma_{\nu,T}>0$ such that
%  \begin{assumption}\label{ass:classical_bound}
%  We assume that the processes $(X^V)_V$ are uniformly stochastically bounded on compact intervals of time, meaning that for any $\nu>0$ and any $T>0$, there exists a constant $\Gamma_{\nu,T}>0$ such that
%  $$\sup_{V}P\left(\sup_{t\in [0,T]}\|X^V(t)\|_\infty\geq \Gamma_{\nu,T}\right)<\nu.$$
% \end{assumption}
% \begin{remark}
%  The uniform stochastic bound is trickier to show in general. However, it holds if a conserved quantity is present, or if $\|y\|_1\leq2$ for any complex $y\in\C$ [cite something or prove].
% \end{remark}
% $$\sup_{V}P\left(\sup_{t\in [0,T]}\|X^V(t)\|_\infty\geq \Gamma_{\nu,T}\right)<\nu.$$
% \end{assumption}
% \begin{remark}
%  The uniform stochastic bound is trickier to show in general. However, it holds if a conserved quantity is present, or if $\|y\|_1\leq2$ for any complex $y\in\C$ [cite something or prove].
% \end{remark}

\begin{theorem}\label{thm:classical}
 Assume that   Assumption \ref{ass:classical_limit} holds. Furthermore, assume that the random variables $X^V(0)/V$ converge in probability to a constant $z^*$ as $V$ goes to infinity. Finally, let $\{Z(t)\,:\,t\geq0\}$ be the unique solution to \eqref{eq:drn} with $Z(0)=z^*$. Then, for any $\varepsilon>0$ and any $T>0$
 \begin{equation*}
     \lim_{V\to\infty}P\left(\sup_{t\in[0,T]}\left\|\frac{X^V(t)}{V}-Z(t)\right\|_\infty>\varepsilon\right)=0.
 \end{equation*}
\end{theorem}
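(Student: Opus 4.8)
The plan is to work with the random time-change (Poisson) representation of $X^V$ recalled in Section~\ref{ssec:rnt} and to compare it, reaction by reaction, with the integral form of the ODE~\eqref{eq:drn}. Writing $\cen{X}^V=X^V/V$ for the volume-scaled process and recalling that $Z(t)=z^*+\sum_{y\to y'\in\Rc}(y'-y)\int_0^t\lambda_{y\to y'}(Z(s))\,ds$, the first step is to divide the Poisson representation by $V$ and replace each Poisson process by its centered version $\tr{N}_{y\to y'}(u)=N_{y\to y'}(u)-u$. This yields, for every reaction,
\begin{equation*}
\frac{1}{V}N_{y\to y'}\!\left(\int_0^t\lambda^V_{y\to y'}(X^V(s))\,ds\right)=\int_0^t\frac{\lambda^V_{y\to y'}(X^V(s))}{V}\,ds+\frac{1}{V}\tr{N}_{y\to y'}\!\left(\int_0^t\lambda^V_{y\to y'}(X^V(s))\,ds\right),
\end{equation*}
so that $\cen{X}^V(t)-Z(t)$ splits into an initial-condition term $\cen{X}^V(0)-z^*$, a sum of centered-Poisson fluctuation terms, and a sum of drift differences $\int_0^t\big[\lambda^V_{y\to y'}(X^V(s))/V-\lambda_{y\to y'}(Z(s))\big]\,ds$.

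Second, I would control the fluctuation terms. On the event that the scaled process stays in a fixed compact set, the arguments of the Poisson processes grow at most linearly in $V$ on $[0,T]$, so by the functional law of large numbers for the unit-rate Poisson process --- equivalently, by Doob's maximal inequality applied to the martingale $\tr{N}$ --- the quantity $\sup_{u\le cVT}|\tr{N}_{y\to y'}(u)|/V$ converges to $0$ in probability as $V\to\infty$. Third, I would split each drift difference as $\big[\lambda^V_{y\to y'}(X^V(s))/V-\lambda_{y\to y'}(\cen{X}^V(s))\big]+\big[\lambda_{y\to y'}(\cen{X}^V(s))-\lambda_{y\to y'}(Z(s))\big]$; the first bracket is handled by Assumption~\ref{ass:classical_limit}, whose uniform convergence on compacts makes it uniformly small, while the second bracket is bounded by $L\|\cen{X}^V(s)-Z(s)\|_\infty$ using the local Lipschitz constant $L$ of $\lambda_{y\to y'}$ on the relevant compact set.

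Putting these pieces together gives a bound of the form $\sup_{s\le t}\|\cen{X}^V(s)-Z(s)\|_\infty\le R_V(t)+C\int_0^t\sup_{r\le s}\|\cen{X}^V(r)-Z(r)\|_\infty\,ds$, where $R_V(t)$ collects the initial-condition, fluctuation, and rate-approximation errors and tends to $0$ in probability, and $C$ aggregates the Lipschitz constants; an application of Gronwall's inequality then yields the claim. The main obstacle is that the Lipschitz and uniform-convergence estimates are only available on compact sets, whereas a priori $\cen{X}^V$ could leave any fixed compact set; I would resolve this by a localization/stopping-time argument, stopping $\cen{X}^V$ when it first exits a compact neighborhood $K$ of the (continuous, hence bounded) trajectory $\{Z(t):t\in[0,T]\}$, proving the Gronwall estimate up to that stopping time, and finally showing that $R_V(t)\to 0$ forces the exit time to exceed $T$ with probability tending to $1$, so that the localization is harmless in the limit.
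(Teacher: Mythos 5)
Your argument is correct and is, in substance, the proof the paper relies on: Theorem~\ref{thm:classical} itself is stated with a citation to Kurtz, but its quantitative counterpart, Theorem~\ref{thm:estimate_p} in the appendix, is proved by exactly your decomposition --- initial-condition error, centered-Poisson fluctuations, the rate-approximation error controlled by Assumption~\ref{ass:classical_limit}, and a local-Lipschitz term --- followed by Gronwall's inequality. Two implementation details differ. First, where you localize with a stopping time at the exit from a compact neighbourhood of the trajectory of $Z$, the paper instead clips the process pathwise into the tube of radius $2\varepsilon$ around $VZ$ via \eqref{eq:minmax}, runs the same Poisson clocks on the clipped rates to build $\hat{X}^{V,2\varepsilon}$, and observes (inequality \eqref{eq:bound}) that the clipped, true, and auxiliary processes agree up to the first exit and that the auxiliary process already witnesses the exceedance there; both devices resolve the same obstacle you identify, namely that the Lipschitz and uniform-convergence estimates are only available on compacts, and your stopping-time version is the more classical one. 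Second, you bound the fluctuation terms $\sup_{u\leq cVT}|\cen{N}(u)|/V$ by Doob's maximal inequality or the functional law of large numbers, which is enough for the qualitative limit claimed in the statement, whereas the paper uses Etemadi's inequality together with a Chernoff bound (Lemma~\ref{lem:cpp_classical}) because it is after the explicit $e^{-C\sqrt{V}}$ convergence rate. Your plan therefore proves the theorem; it simply does not yield the sharper quantitative bound that the appendix extracts from the same skeleton.
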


Note that the distribution of the fate of a single molecule is not given, since the classical scaling concerns average dynamics. The goal of this paper is to address this issue, by providing a technique to simulate an approximation of the time evolution of a single observable species, as described in the next section.

\section{Molecular status}\label{sec:track}

% In this section, we show how the dynamics of a single molecule can be successfully approximated, based on the classical scaling approximation. 
\edit{We consider the problem of tracking the fate of an individual molecule through its  transformations into different species in  a certain stochastic  reaction network. For instance, we could be interested in the change in status of a single tracked individual of type $S$ in the SI model, discussed in the Introduction. To introduce a more general scenario where it is desirable to track the time evolution of different \emph{parts} of a species molecule, we give the following example.
\begin{example}\label{ex:MM1}
 Consider the following reaction network, depicting a Michaelis-Menten mechanism where the product protein and the enzyme can spontaneously transform into each other:
  \begin{equation}
  E+S\ce{<=>}C\ce{->}E+P,\quad P\ce{<=>} E.
 \end{equation}
 In particular, the complex $C$ represents a molecule of substrate $S$ and enzyme $E$ bound together. When the bond is broken, the molecule of enzyme is released while the molecule of substrate is either released or transformed into the product $P$. Suppose we want to keep track of the history of a molecule of substrate $S$. If we were dealing with a classic Michaelis-Menten kinetics, i.e. without the reactions $P\rightleftharpoons E$, then we could simply consider $S$, $C$, and $P$ as status for the tracked molecule, corresponding to unbound substrate, bound substrate, and product, respectively. Since the reactions $P\rightleftharpoons E$ are present, if we want to keep track of the fate of a molecule of substrate we need to take into account the fact that it can ultimately (via complex, then protein) be transformed into an enzyme, so $E$ becomes a possible status of the molecule. We now need to differentiate between the parts of a complex molecule of $C$ that a molecule of $E$ and a molecule of $S$ get transformed into by the reaction $E+S\to C$. The part of a (complex) molecule of $C$ that a molecule of $E$ gets transformed into will become a free enzyme again via the reaction $C\to E+P$, while the part a molecule of $C$ that a molecule of $S$ gets transformed into will become a molecule of product $P$ via $C\to E+P$. Here and below by ``part of a molecule" we  mean a part of a molecular complex rather then  one of  atoms comprising the specific molecule. To formally describe such dynamics  we consider $\{E,S,P,C_E,C_S\}$ as the set of molecular status, where $C_E$ denotes we are tracking a molecule of $E$ bound in the complex $C$, and $C_S$ denotes we are tracking a molecule of $S$ bound in $C$. Note that some status correspond to species, some other status do not. In order to avoid any notational confusion between the potentially different sets of chemical species and molecule status,  we adopt the convention of using tildes for status. In the present example, we will denote the set of tracked molecule status by $\{\tr{E},\tr{S},\tr{P}, \tr{C}_E, \tr{C}_S\}$.
\end{example}

Based on the above example, we see that the molecules whose dynamics we want to follow may or may not correspond to a subset of the chemical species $\Sp$. To deal with this general setting, we formally represent \emph{status} by a set $\track$ of symbols endowed with a function $\sigma\colon \track\to \Sp\cup\{0\}$ which links every status with its corresponding species in $\Sp$. For instance, in Example~\ref{ex:MM1} above we will choose $\sigma(\tr{S})=S$ and $\sigma(\tr{C}_E)=C$. Note that the number of status defined in this way can be less than, equal to, or larger than the number of species. A molecule that changes its status with time will be referred to as a \emph{tracked molecule}.}

% Note that the set of trackable species is not, in general, a subset of $\Sp$. This slight complication is needed in order to deal with models as the one presented in  
The set $\track$ needs to include the special state $\Delta$ to denote the potential degradation of the tracked molecule, and we set $\sigma(\Delta)=0$. To simplify the notation, for all $x,y\in \ZZ_{\geq0}^d$ and $\trackS\in\track\setminus\{\Delta\}$ we denote by $\theta_{y}(\trackS,x)$ the probability that a certain molecule of species $\sigma(\trackS)$ is chosen if $y_{\sigma(\trackS)}$ molecules are uniformly drawn out of $x_{\sigma(\trackS)}$ molecules of $\sigma(\trackS)$ available. Specifically,
\begin{equation*}
\theta_{y}(\trackS,x)=\begin{cases}
               \frac{\binom{x_{\sigma(\trackS)}\;-1}{y_{\sigma(\trackS)}-1}}{\binom{x_{\sigma(\trackS)}}{y_{\sigma(\trackS)}}}=\frac{y_{\sigma(\trackS)}}{x_{\sigma(\trackS)}}&\text{if }x_{\sigma(\trackS)}\geq y_{\sigma(\trackS)}\geq1\\
               0&\text{otherwise}
              \end{cases}. 
\end{equation*}
For completeness, we define $\theta_y(\Delta,x)=0$. Finally, note that in reactions such as $2A\to B+C$ we can imagine a molecule of $A$ is transformed into a molecule of $B$, while the other molecule of $A$ turns into a molecule of $C$. If we are tracking the fate of $A$ molecules and the reaction $2A\to B+C$ occurs, it is reasonable to assume the molecule we are tracking has a 50\% change of turning into a molecule of $B$, and a 50\% change of becoming a molecule of $C$. We denote these probabilities with $p_{2A\to B+C}(A,B)$ and $p_{2A\to B+C}(A,C)$, respectively, and in general allow for different value choices, as along as $p_{2A\to B+C}(A,B)+p_{2A\to B+C}(A,C)=1$. The definition of \emph{tracking stochastic reaction system} in the most general setting is below.
\begin{definition}[Tracking stochastic reaction system]\label{def:srswts}
 Let $\G=\{\Sp, \C, \Rc\}$ be a reaction network. Consider a family of stochastic kinetics $\{\lambda^V_{y\to y'}\,:\, y\to y'\in\Rc\}$ indexed by $V$, and let $X^V$ denote the associated continuous time Markov chains. Let $\track$ be a set of status. We define the \emph{tracking stochastic reaction system} as the continuous-time Markov chain $(Y^V, X^V)$ with state space $\track\times\ZZ_{\geq0}^d$ and transition rates
 \begin{align*}
  q\Big((\Delta,x), (\trackS',x')\Big)&=\mathbbm{1}_{\{\trackS'\}}(\Delta)\sum_{\mathclap{\substack{y\to y'\in\Rc\\ y'-y=x'-x}}}\lambda^V_{y\to y'}(x)\\
  \intertext{and for all $\trackS\neq \Delta$}
  q\Big((\trackS,x), (\trackS',x')\Big)&=\sum_{\mathclap{\substack{y\to y'\in\Rc\\ y'-y=x'-x}}}\Big((1-\theta_{y}(\trackS,x))\mathbbm{1}_{\{\trackS'\}}(\trackS)+\theta_{y}(\trackS,x)p_{y\to y'}(\trackS,\trackS')\Big)\lambda^V_{y\to y'}(x),
 \end{align*}
 where for all reactions $y\to y'\in\Rc$ the following holds:
 \begin{itemize}
  \item for any $\trackS\in\track,\trackS'\in\track\cup\{\Delta\}$ we have $0\leq p_{y\to y'}(\trackS,\trackS')\leq 1$;
  \item $p_{y\to y'}(\trackS,\trackS')=0$ whenever $\sigma(\trackS)\notin\supp(y)$ or $\sigma(\trackS')\notin\supp(y')\cup\Delta$;
  \item if $\sigma(\trackS)\in\supp(y)$ then
  \begin{equation*}
      \sum_{\trackS'\in\track\,:\,\sigma(\trackS')\in\supp(y')\cup\Delta}p_{y\to y'}(\trackS,\trackS')=1.
  \end{equation*}
 \end{itemize}
\end{definition}

In the above definition, the usual stochastic reaction system is coupled with the fate of a single tracked molecule: a molecule in \edit{status} $\trackS$ can transform whenever a reaction $y\to y'$ occurs, with a probability given by $\theta_{y}(\trackS,\trackS')$. By definition, the quantity $\theta_{y}(\trackS,\trackS')$ denotes precisely the probability that the tracked molecule takes part in the reaction $y\to y'$, assuming that the reacting molecules are uniformly chosen among those present. If that happens, the new state of the tracked molecule is drawn according to the probability distribution $\{p_{y\to y'}(\trackS,\trackS')\}_{\trackS'\in\supp(y')\cup\Delta}$ (see Example~\ref{ex:dimerization} for a case where this distribution is non-trivial). If the tracked molecule is irreversibly degraded, its \edit{status} becomes $\Delta$ and cannot be \edit{further} changed. \edit{In what follows, we will sometimes identify the state space of $Y^V$, given by $\track$, with the canonical basis of $\RR^{\vert\track\vert}$, similarly to how complexes are implicitly identified with vectors in $\RR^d$.}

The only technical requirement to have a \edit{tracking stochastic reaction system is establishing a rule on the status changes of the tracked molecules involved in a reaction. Mathematically, this can always be done. For instance, choose $\track=\Sp$ and let $\sigma$ be the identity. Consider a reaction $y\to y'$. If $\|y\|_1\leq \|y'\|_1$, then an injective map from the molecules consumed to the molecules created can be defined, giving a rule for molecular status change. If instead $\|y\|_1> \|y'\|_1$, then any molecule consumed can be either injectively mapped to a molecule created, or mapped to the cemetery status $\Delta$. Hence, formally the requirements of Definition~\ref{def:srswts} can always be satisfied for some choices of $\track$ and $\sigma$. However, care needs to be exercised if we want status changes to reflect physical properties of the system (see Example~\ref{ex:MM1}).}

% Moreover, if we consider the physical system modeled by the reaction network, it is always true that reactions either transform molecules or degrade them. Hence, even when considering the physical meaning of the model, status can always be defined to track the fate of every molecule of particular interest. In this case however some care should be taken to reflect the real physical changes caused by the reactions, and the set $\track$ may need to be different from $\Sp$, as in Example~\ref{ex:MM1}.

\begin{remark}
 The generator of a tracking stochastic reaction system, as defined in Definition~\ref{def:srswts}, is given by
 \begin{equation*}
     \mathcal{A}f(\Delta,x)=\sum_{y\to y'\in\Rc}\lambda^V_{y\to y'}(x)\Big(f(\Delta,x+y'-y)-f(\Delta,x)\Big)
 \end{equation*}
 and for $\trackS\neq\Delta$
 \begin{multline*}
  \mathcal{A}f(\trackS,x)=\sum_{y\to y'\in\Rc}(1-\theta_{y}(\trackS,x))\lambda^V_{y\to y'}(x)\Big(f(\trackS,x+y'-y)-f(\trackS,x)\Big)\\
  +\sum_{y\to y'\in\Rc}\sum_{\trackS'\in\supp(y')\cup\Delta}\theta_{y}(\trackS,x)p_{y\to y'}(\trackS,\trackS')\lambda^V_{y\to y'}(x)\Big(f(\trackS',x+y'-y)-f(\trackS,x)\Big),
 \end{multline*}
 for all functions $f\colon(\track)\times\ZZ_{\geq0}^d\to\RR$.
\end{remark}

\begin{example}\label{ex:SI}
Consider the SI reaction network described in \eqref{eq:SIint}, which we repeat here for convenience:
 \begin{equation}\label{eq:SI}
  S+I\ce{->}2I. %,\quad I\ce{->} S.
 \end{equation}
 In this case, we are interested in describing the history of  susceptible individuals who become infected. The set of status is therefore $\track=\{\tr{S},\tr{I}\}$ with $\sigma(\tr{S})=S$ and $\sigma(\tr{I})=I$. Furthermore, we choose the probabilities $p_{S+I\to2I}(\tr{S},\tr{I})=1$ and $p_{S+I\to2I}(\tr{I},\tr{I})=1$. Alternatively, one can simply consider $\track=\{\tr{S}\}$, with the understanding that whenever a susceptible individual gets infected we consider it as irreversibly degraded, and its state becomes $\Delta$. In this case, $p_{S+I\to2I}(\tr{S},\Delta)=1$.
 
 The state of single individuals can be tracked also in the more complex model
  \begin{equation}\label{eq:SIS}
  S+I\ce{->}2I,\quad I\ce{->} S.
 \end{equation}
 Here, the set of status is $\{\tr{S},\tr{I}\}$, with $\sigma(\tr{S})=S$ and $\sigma(\tr{I})=I$, and the transformation probabilities are $p_{S+I\to2I}(\tr{S},\tr{I})=1$, $p_{S+I\to2I}(\tr{I},\tr{I})=1$, $p_{I\to S}(\tr{I},\tr{S})=1$. Here, relevant questions on the fate of a single individual could concern, for example, the number of infections it undergoes in a given time, or after how long the $n$th infection occurs. We can even extend the model to include migrations, and obtain
 \begin{equation}\label{eq:SIS_migration}
  S+I\ce{->}2I,\quad I\ce{->} S,\quad 0\ce{<=>}S,\quad 0\ce{<=>}I.
 \end{equation}
 In this case, it is natural to assume $p_{S\to0}(\tr{S},\Delta)=1$ and $p_{I\to0}(\tr{I},\Delta)=1$. Relevant questions could involve, for example, the average number of infection a susceptible individual undergoes before migrating.
\end{example}

\begin{example}\label{ex:dimerization}
 Consider the following reaction network, where a protein $P$ promotes its own phosphorylation:
 \begin{equation}
  2P\ce{->}P+P^*,\quad P^*\ce{->} P, P\ce{->}0.
 \end{equation}
 Here, we may assume we are interested in observing the dynamics of a molecule of protein $P$. Hence, the set of status is $\{\tr{P},\tr{P}^*\}$ with $\sigma(\tr{P})=P$ and $\sigma(\tr{P}^*)=P^*$. It is natural to assume that the two molecules of $P$ involved in the reaction $2P\to P+P^*$ have the same probability of being phosphorylated or serving as the reaction catalyst. Hence, $p_{2P\to P+P^*}(\tr{P},\tr{P})=p_{2P\to P+P^*}(\tr{P},\tr{P}^*)=1/2$. The other transformation probabilities are given by $p_{P^*\to P}(\tr{P}^*,\tr{P})=1$ and $p_{P\to0}(\tr{P},\Delta)=1$.
\end{example}

\begin{example}\label{ex:MM}
 \edit{Consider the reaction network of Example~\ref{ex:MM1}:
  \begin{equation}
  E+S\ce{<=>}C\ce{->}E+P,\quad P\ce{<=>} E.
 \end{equation}
 We consider the set of status $\{\tr{E},\tr{S},\tr{P}, \tr{C}_E, \tr{C}_S\}$, as described above. In this case the function $\sigma$ associates every status of the molecules with the chemical species they are part of}: $\sigma(\tr{E})=E$, $\sigma(\tr{S})=S$, $\sigma(\tr{P})=P$, $\sigma(\tr{C}_E)=C$, and $\sigma(\tr{C}_S)=C$. The transformation probabilities are given by
 \begin{center}
 \begin{tabular}{lclcl}
 $p_{E+S\to C}(\tr{E},\tr{C}_E)=1$ && $p_{C\to E+S}(\tr{C}_E,\tr{E})=1$ && $p_{C\to E+P}(\tr{C}_E,\tr{E})=1$ \\
 $p_{E+S\to C}(\tr{S},\tr{C}_S)=1$ && $p_{C\to E+S}(\tr{C}_S,\tr{S})=1$ && $p_{C\to E+P}(\tr{C}_S,\tr{P})=1$ \\
 $p_{P\to E}(\tr{P},\tr{E})=1$ && $p_{E\to P}(\tr{E},\tr{P})=1$
 \end{tabular}
 \end{center}

\end{example}

\begin{remark}
 The interpretation of a tracking stochastic reaction system is that of a regular stochastic reaction system with the subsequent tranformations of a given particle being tracked. If the initial state $Y^V(0)$ of the tracked molecule is not present in the initial $X^V(0)$, that is if $X^V_{\sigma(Y^V(0))}(0)=0$, then the initial condition of $(Y^V, X^V)$ is not consistent with the interpretation of the process. The process $(Y^V, X^V)$ is still well-defined and its evolution can be studied, but its interpretation is no longer valid. In order to obtain meaningful results, we therefore tacitly assume that $X^V_{\sigma(Y^V(0))}(0)>0$, even if we do not require it formally.% to not restrict the state space of $(Y^V, X^V)$ in an unnecessarily complicated manner.
\end{remark}

\subsection{Representation as a regular stochastic reaction network}\label{sec:cast} %reaction system}

In this section we show how a  tracking stochastic reaction system $(Y^V, X^V)$ can be realized as a regular stochastic reaction system with species set given by $\track\sqcup\Sp$, where $\sqcup$ denotes a disjoint union. In particular, the state space is $\ZZ^{|\track|}_{\geq0}\times\ZZ^d_{\geq0}$, where for convenience we consider the first coordinates to refer to $\track$, and the rest to the species of the original process $\Sp$. We denote by $(\tr{x},x)$ a generic state in $\ZZ^{|\track|}_{\geq0}\times\ZZ^d_{\geq0}$. Consider the set of reactions $\Rc\cup\trackRc$ where
\begin{equation*}
    \trackRc=\{\trackS+y\to \trackS'+y'\,:\,y\to y'\in\Rc, \trackS,\trackS'\in\track\text{ and }p_{y\to y'}(\trackS, \trackS')>0\}
\end{equation*}
and endow them with the following reaction rates:
\begin{align*}
 \lambda^V_{y\to y'}(\tr{x},x)&=\sum_{\trackS\in\track}\tr{x}_{\trackS}(1-\theta_{y}(\trackS,x))\lambda^V_{y\to y'}(x)\\
 \lambda^V_{\trackS+y\to \trackS'+y'}(\tr{x},x)&=\tr{x}_{\trackS}\theta_{y}(\trackS,x)p_{y\to y'}(\trackS,\trackS')\lambda^V_{y\to y'}(x).
\end{align*}
Note that the second component of the process has the same transitions as $X^V$, with exactly the same rates. Hence, we can safely denote the process associated with the above stochastic reaction network by $(\tr{Y}^V, X^V)$.
Note that the quantity $\sum_{\trackS\in\track}\tr{x}_{\trackS}$ is conserved by all possible transitions. Hence, if we consider an initial condition $(\tr{Y}(0), X(0))$ with $\sum_{\trackS\in\track}\tr{Y}_{\trackS}(0)=1$, then at any time point $t$ exactly one entry of the vector $\tr{Y}(t)$ is 1, and the other entries are zero. It follows that there is a bijection between the possible values of $\tr{Y}$ and $\track$, given by the function $\supp(\tr{Y}(t))$. In this case, by identifying status with vectors of the canonical basis of $\RR^{|\track|}$ as already done in the paper for the species in $\Sp$, the transition rates can be equivalently written as
\begin{align*}
 \lambda^V_{y\to y'}(\tr{x},x)&=\sum_{\trackS\in\track}\mathbbm{1}_{\{\trackS\}}(\tr{x})(1-\theta_{y}(\trackS,x))\lambda^V_{y\to y'}(x)\\
 \lambda^V_{\trackS+y\to \trackS'+y'}(\tr{x},x)&=\mathbbm{1}_{\{\trackS\}}(\tr{x})\theta_{y}(\trackS,x)p_{y\to y'}(\trackS,\trackS')\lambda^V_{y\to y'}(x),
\end{align*}
Hence, if $\sum_{\trackS\in\track}\tr{Y}_{\trackS}(0)=1$ then the transitions and the rates of $(Y^V, X^V)$ and $(\tr{Y}^V, X^V)$ coincide, and $(Y^V, X^V)$ can be therefore realized as a stochastic reaction network with an appropriate initial condition. In particular, we can write
\begin{align}\label{eq:X_kurtznotation}
 X^V(t)&=X^V(0)+\sum_{y\to y'\in\Rc}(y'-y)N_{y\to y'}\left(\int_0^t\lambda^V_{y\to y'}(X^V(s))ds\right)\\
 \label{eq:Y_kurtznotation}
 Y^V(t)&=Y^V(0)+\sum_{y+\trackS\to y'+\trackS'\in\trackRc}(\trackS'-\trackS)N_{y+\trackS\to y'+\trackS'}\left(\int_0^t \lambda^V_{\trackS+y\to \trackS'+y'}(Y^V(s),X^V(s))ds\right)
\end{align}
where $N_r$ for $r\in\Rc\cup\trackRc$ are independent unit-rate Poisson processes. Note that with the above writing, all the processes in the set $\{(Y^V,X^V)\}_{V\in\ZZ_{\geq1}}$ can be defined on the same probability space.

\section{Results}\label{sec:results}

In this section we state our  main results  and illustrate   their applications.

\subsection{Classical scaling for the fate of a single molecule}\label{sec:cssm}

In this section we state a law of large number for the process $Y^V$. In order to do this, we consider a family of tracking stochastic reaction systems $(Y^V, X^V)$, with $V$ varying in the integer numbers greater than one. We then assume that Assumption \ref{ass:classical_limit} is satisfied for some locally Lipschitz functions $\lambda_{y\to y'}$, and denote by $Z$ the solution to \eqref{eq:drn}. Hence, we know by Theorem~\ref{thm:classical} that $V^{-1}X^V$ will converge to $Z$ path-wise with the uniform convergence topology over compact intervals of time, for $V$ going to infinity. 

In this section we express $(Y^V, X^V)$ by means of independent unit-rate Poisson processes, as in \eqref{eq:X_kurtznotation} and \eqref{eq:Y_kurtznotation}. With the notation introduced in the previous section in mind, we have the following first technical result:

\begin{lemma}\label{lem:convergence_hatlambda}
 Assume that Assumption \ref{ass:classical_limit} holds. Then, for any $\trackS+y\to \trackS'+y'\in\trackRc$, any $w\in\track$, and any compact set $K\subset\RR^d_{>0}$ we have
 \begin{equation}\label{skdfhdfkghrieuvh}
  \lim_{V\to \infty}\sup_{z\in K}\left|\lambda^V_{\trackS+y\to \trackS'+y'\in\trackRc}(w,\floor{Vz})-\lambda_{y\to y'}(w,z)\right|=0,
 \end{equation}
 where the function $\lambda_{\trackS+y\to \trackS'+y'}\colon \track\times\RR_{\geq0}^d$ is defined as
 \begin{equation*}
     \lambda_{\trackS+y\to \trackS'+y'}(w,z)=\mathbbm{1}_{\{w\}}(\trackS)p_{y\to y'}(\trackS,\trackS')y_{\sigma(\trackS)}\frac{\lambda_{y\to y'}(z)}{z_{\sigma(\trackS)}}
 \end{equation*}
 if both $z_{\sigma(\trackS)}$ and $y_{\sigma(\trackS)}$ are positive, and zero otherwise. Moreover, the function $\lambda_{\trackS+y\to \trackS'+y'}$ is locally Lipschitz if restricted to $\track\times\RR_{>0}^d$.
\end{lemma}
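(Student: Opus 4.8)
The plan is to expand the prelimit rate evaluated at the scaled argument, factor out the volume $V$, and recognize the result as a product of two quantities that each converge uniformly on $K$; uniform convergence of the product will then follow from uniform boundedness of the factors. The local Lipschitz assertion reduces to a statement in the continuous variable alone, because $\track$ is finite and discrete.

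First I would substitute the definitions from Section~\ref{sec:cast}. Using the canonical-basis identification, $\tr{x}_{\trackS}$ becomes $\mathbbm{1}_{\{\trackS\}}(w)$, and with the explicit form $\theta_{y}(\trackS,x)=y_{\sigma(\trackS)}/x_{\sigma(\trackS)}$ I would write, since $\floor{Vz}_{\sigma(\trackS)}=\floor{Vz_{\sigma(\trackS)}}$,
\begin{equation*}
\lambda^V_{\trackS+y\to\trackS'+y'}(w,\floor{Vz})=\mathbbm{1}_{\{\trackS\}}(w)\,p_{y\to y'}(\trackS,\trackS')\cdot\frac{y_{\sigma(\trackS)}V}{\floor{Vz_{\sigma(\trackS)}}}\cdot\frac{\lambda^V_{y\to y'}(\floor{Vz})}{V},
\end{equation*}
valid whenever the condition $\floor{Vz_{\sigma(\trackS)}}\geq y_{\sigma(\trackS)}\geq1$ defining $\theta$ holds. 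The constant prefactor $\mathbbm{1}_{\{\trackS\}}(w)p_{y\to y'}(\trackS,\trackS')$ already matches that of the claimed limit, so it can be set aside.

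The crux is the middle factor. Since reactions in $\trackRc$ satisfy $p_{y\to y'}(\trackS,\trackS')>0$, Definition~\ref{def:srswts} forces $\sigma(\trackS)\in\supp(y)$, hence $y_{\sigma(\trackS)}\geq1$ (the complementary case $y_{\sigma(\trackS)}=0$ makes both sides identically zero and is trivial). Here is where compactness in the \emph{open} orthant is essential: setting $\delta=\min_{z\in K}z_{\sigma(\trackS)}>0$, I get $\floor{Vz_{\sigma(\trackS)}}\geq V\delta-1$, so for $V$ large enough, uniformly over $K$, the defining condition of $\theta$ holds and $\floor{Vz_{\sigma(\trackS)}}\to\infty$. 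The elementary bound $|Vz_{\sigma(\trackS)}-\floor{Vz_{\sigma(\trackS)}}|\leq1$ then gives $|\floor{Vz_{\sigma(\trackS)}}/V-z_{\sigma(\trackS)}|\leq1/V$ uniformly, whence $y_{\sigma(\trackS)}V/\floor{Vz_{\sigma(\trackS)}}\to y_{\sigma(\trackS)}/z_{\sigma(\trackS)}$ uniformly on $K$, the lower bound $\delta$ keeping the denominator away from zero. For the last factor I invoke Assumption~\ref{ass:classical_limit} directly, giving $\lambda^V_{y\to y'}(\floor{Vz})/V\to\lambda_{y\to y'}(z)$ uniformly on $K$. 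To pass from convergence of the two factors to convergence of their product I would use the splitting $|a_Vb_V-ab|\leq|a_V|\,|b_V-b|+|b|\,|a_V-a|$, after noting that $a_V(z)=y_{\sigma(\trackS)}V/\floor{Vz_{\sigma(\trackS)}}$ is uniformly bounded (its denominator exceeds $V\delta-1$) and that $b(z)=\lambda_{y\to y'}(z)$ is bounded, being continuous on the compact set $K$. This yields \eqref{skdfhdfkghrieuvh}.

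The main obstacle, really the only non-routine point, is the uniform handling of the floored denominator $\floor{Vz_{\sigma(\trackS)}}$: one must simultaneously ensure the hypergeometric condition in $\theta$ eventually holds across all of $K$ and that the ratio $V/\floor{Vz_{\sigma(\trackS)}}$ converges without blowing up, both of which hinge on the uniform lower bound $\delta>0$ available precisely because $K\subset\RR^d_{>0}$ is compact in the open orthant. Finally, for the local Lipschitz assertion, since $\track$ is finite and discrete it suffices to fix $w$ and show that $z\mapsto\lambda_{y\to y'}(z)/z_{\sigma(\trackS)}$ is locally Lipschitz on $\RR^d_{>0}$; this follows because $\lambda_{y\to y'}$ is locally Lipschitz by hypothesis, $z\mapsto1/z_{\sigma(\trackS)}$ is locally Lipschitz on $\RR^d_{>0}$ (its argument is bounded away from zero on compacts), and products of locally bounded, locally Lipschitz functions are locally Lipschitz.
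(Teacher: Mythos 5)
Your proposal is correct and follows essentially the same route as the paper's proof: reduce to the nontrivial case $y_{\sigma(\trackS)}\geq1$, use compactness of $K$ in the open orthant to bound $z_{\sigma(\trackS)}$ below by a positive constant so that for large $V$ the condition defining $\theta$ holds uniformly and $|\floor{Vz_{\sigma(\trackS)}}/V-z_{\sigma(\trackS)}|\leq 1/V$, then invoke Assumption~\ref{ass:classical_limit}, and obtain local Lipschitzness as a product of the locally Lipschitz functions $z\mapsto 1/z_{\sigma(\trackS)}$ and $\lambda_{y\to y'}$. The only difference is that you spell out the product-convergence step (via $|a_Vb_V-ab|\leq|a_V|\,|b_V-b|+|b|\,|a_V-a|$ with uniform boundedness of the factors), which the paper leaves implicit.
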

\begin{proof}
 If $y_{\sigma(S)}=0$, then both $\lambda^V_{\trackS+y\to \trackS'+y'\in\trackRc}$ and $\lambda_{y\to y'}$ are constantly zero, hence \eqref{skdfhdfkghrieuvh} holds. If $y_{\sigma(S)}$ is positive, then for all $z\in K$ we have
 \begin{equation*}
  \left|\lambda^V_{\trackS+y\to \trackS'+y'\in\trackRc}(w,\floor{Vz})-\lambda_{y\to y'}(w,z)\right|=
  \mathbbm{1}_{\{w\}}(\trackS)p_{y\to y'}(\trackS,\trackS')
  \left|
  \theta_{y}(\trackS,\floor{Vz})\lambda^V_{y\to y'}(\floor{Vz})-
  y_{\sigma(S)}\frac{\lambda_{y\to y'}(z)}{z_{\sigma(S)}}
  \right|
 \end{equation*}
  Let $m=\min_{z\in K}z_{\sigma{\trackS}}$, which is positive because $K$ is a compact set contained in $\RR^d_{>0}$. If $V$ is large enough such that $Vm>y_{\sigma{\trackS}}$ then
 \begin{equation*}
  \left|\lambda^V_{\trackS+y\to \trackS'+y'\in\trackRc}(w,\floor{Vz})-\lambda_{y\to y'}(w,z)\right|=
  \mathbbm{1}_{\{w\}}(\trackS)p_{y\to y'}(\trackS,\trackS')y_{\sigma(S)}
  \left|
  \frac{\lambda^V_{y\to y'}(\floor{Vz})}{V\cdot(\floor{Vz_{\sigma(\trackS)}}/V)}-
  \frac{\lambda_{y\to y'}(z)}{z_{\sigma(S)}}
  \right|
 \end{equation*}
 Hence, \eqref{skdfhdfkghrieuvh} follows from Assumption~\ref{ass:classical_limit} and
 \begin{equation*}
     \max_{z\in K}\left|\frac{\floor{Vz_{\sigma(\trackS)}}}{V}-z_{\sigma(\trackS)}\right|\leq \frac{1}{V}.
 \end{equation*}
 To conclude the proof, we only need to show that $\lambda_{\trackS+y\to \trackS'+y'}$ restricted to $\track\times\RR_{>0}^d$ is locally Lipschitz. However, this follows from it being the product (up to multiplication by a constant) of the two locally Lipschitz functions $z\mapsto1/z_{\sigma(\trackS)}$ and $\lambda_{y\to y'}$.
\end{proof}

 The main goal of this section is to prove a classical scaling limit for a single-molecule trajectory. To this aim, define the process $Y$ by
\begin{equation}
 \label{eq:tildeY_kurtznotation}
 Y(t)=Y(0)+\sum_{\trackS+y\to \trackS'+y'\in\trackRc}(\trackS'-\trackS)N_{\trackS+y\to \trackS'+y'}\left(\int_0^t \lambda_{\trackS+y\to \trackS'+y'}(Y(s),Z(s))ds\right).
\end{equation}
Then, the following result holds, where we implicitly identify the states of $Y^V$ and $Y$ with the canonical basis of $\RR^{|\track|}$. Note that the assumption that all the components of the solution $Z$ are strictly positive in the time interval $[0,T]$ is made, but this is only a mild restriction to avoid unnecessary technicality, and is always verified under mass-action kinetics as long as $Z(0)\in\RR^d_{>0}$ (see Remark~\ref{rem:positive}). The proof of the result is postponed to Appendix~\ref{sec:proof}, where more precise bounds are given.

\begin{theorem}\label{cor:fddconv}
 Assume that Assumption \ref{ass:classical_limit} holds. Furthermore, assume that the random variables $X^V(0)/V$ converge in probability to some $z^*\in\RR^d_{>0}$ as $V$ goes to infinity, and let $Z(0)=z^*$. Assume that the solution $Z$ to \eqref{eq:drn} with $Z(0)=z^*$ exists over the interval $[0,T]$ and that
 \begin{equation*}
     m=\min_{\substack{i=1,2,\dots,d\\u\in[0,T]}}Z_i(u)>0.
 \end{equation*}
 Finally, assume that $Y^V(0)=Y(0)$ for all positive integers $V$. Then
 \begin{equation}\label{eq:exp_going_0}
 \lim_{V\to\infty}\sup_{t\in[0,T]}P\left(Y^V(t)\neq Y(t)\right)=\lim_{V\to\infty}\sup_{t\in[0,T]}E\left[\|Y^V(t)-Y(t)\|_\infty\right]=0. 
 \end{equation}
\end{theorem}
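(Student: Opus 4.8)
The plan is to exploit the fact that $Y^V$ and $Y$ are built from the \emph{same} unit-rate Poisson processes $\{N_r\}_{r\in\trackRc}$ in \eqref{eq:Y_kurtznotation} and \eqref{eq:tildeY_kurtznotation}, so that a direct pathwise coupling is available. First note that, by the conservation relation of Section~\ref{sec:cast} (which gives $\sum_{\trackS}\tr{Y}_{\trackS}\equiv1$ for both processes), $Y^V(t)$ and $Y(t)$ are elements of the canonical basis of $\RR^{|\track|}$; hence $\|Y^V(t)-Y(t)\|_\infty=\mathbbm{1}\{Y^V(t)\neq Y(t)\}$ and the two quantities in \eqref{eq:exp_going_0} coincide and equal $P(Y^V(t)\neq Y(t))$. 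Abbreviating a reaction $r=\trackS+y\to\trackS'+y'\in\trackRc$ and the cumulative hazards $T_r(t)=\int_0^t\lambda_{r}(Y(s),Z(s))\,ds$ and $T_r^V(t)=\int_0^t\lambda^V_{r}(Y^V(s),X^V(s))\,ds$, the representations \eqref{eq:Y_kurtznotation}--\eqref{eq:tildeY_kurtznotation} express $Y^V(t)$ and $Y(t)$ as the \emph{same} deterministic affine function of the counts $\{N_r(T_r^V(t))\}_r$ and $\{N_r(T_r(t))\}_r$, respectively. Consequently, if $N_r(T_r^V(t))=N_r(T_r(t))$ for every $r$ then $Y^V(t)=Y(t)$, so that
\begin{equation*}
 P\big(Y^V(t)\neq Y(t)\big)\le\sum_{r\in\trackRc}P\Big(N_r\text{ has a point in }\big(T_r^V(t)\wedge T_r(t),\,T_r^V(t)\vee T_r(t)\big]\Big).
\end{equation*}

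Next I would localize and make the two hazards close. Fix $\varepsilon\in(0,m/2)$, set $K=\{z\in\RR^d:\ m/2\le z_i\le \max_{u\le T}\|Z(u)\|_\infty+1\ \forall i\}$, a compact subset of $\RR^d_{>0}$ containing $Z([0,T])$, and let $G_V=\{\sup_{t\le T}\|V^{-1}X^V(t)-Z(t)\|_\infty\le\varepsilon\}$, so that $V^{-1}X^V\in K$ on $G_V$ and $P(G_V)\to1$ by Theorem~\ref{thm:classical}. Combining the uniform convergence of Lemma~\ref{lem:convergence_hatlambda} (call the sup over $K$ of the error $\eta_V\to0$) with the local Lipschitz continuity of $\lambda_{r}(w,\cdot)$ on $K$ (constant $L_K$), one gets $\rho_V:=\eta_V+L_K\varepsilon$ such that $|\lambda^V_{r}(Y^V(s),X^V(s))-\lambda_{r}(Y(s),Z(s))|\le\rho_V$ whenever $Y^V(s)=Y(s)$ and $V^{-1}X^V(s)\in K$, while in general this difference is at most $2\bar\lambda$, where $\bar\lambda$ bounds all rates on $\track\times K$. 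Integrating, on $G_V$ and for $t\le T$,
\begin{equation*}
 |T_r(t)-T_r^V(t)|\le \rho_V T+2\bar\lambda\int_0^t\mathbbm{1}\{Y^V(s)\neq Y(s)\}\,ds .
\end{equation*}

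For the third step I would bound the probability that $N_r$ lands in the random gap by the expected gap length. Writing $U_r(t)=T_r^V(t)\wedge T_r(t)$ and $W_r(t)=T_r^V(t)\vee T_r(t)$, each is a stopping time for the time-changed filtration of $N_r$ (both hazards depend on $N_r$ only through its values below the current level, as in the random time change representation of \cite[Ch.~6]{EK:1986}), so optional sampling applied to the martingale $u\mapsto N_r(u)-u$ gives $P(N_r(W_r(t))-N_r(U_r(t))\ge1)\le E[\,N_r(W_r(t))-N_r(U_r(t))\,]=E\,|T_r(t)-T_r^V(t)|$. Feeding in the hazard estimate of the previous step (and stopping $X^V$ at the first exit of $V^{-1}X^V$ from $K$, which keeps all rates bounded so that the contribution off $G_V$ is absorbed into $P(G_V^c)$), the function $g(t)=P(Y^V(t)\neq Y(t))$ satisfies
\begin{equation*}
 g(t)\le P(G_V^c)+|\trackRc|\,\rho_V T+2|\trackRc|\,\bar\lambda\int_0^t g(s)\,ds,\qquad t\in[0,T].
\end{equation*}
Gronwall's lemma yields $\sup_{t\le T}g(t)\le\big(P(G_V^c)+|\trackRc|\rho_V T\big)e^{2|\trackRc|\bar\lambda T}$; since $P(G_V^c)\to0$ and $\eta_V\to0$, the $\limsup_V$ is at most $|\trackRc|L_K\varepsilon T\,e^{2|\trackRc|\bar\lambda T}$, and letting $\varepsilon\downarrow0$ gives \eqref{eq:exp_going_0}.

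The main obstacle is the self-referential character of the estimate: the disagreement event $\{Y^V(s)\neq Y(s)\}$ appears inside the very hazard difference that governs the probability of disagreement, so no single union bound can close the argument — this is exactly what forces the Gronwall step on $g$, and it is why the error is controlled by the total tracking rate $|\trackRc|\bar\lambda$ in the exponent. The accompanying technical difficulty is justifying ``probability of a point of $N_r$ in a data-dependent interval $\le$ expected length'': because the endpoints $U_r,W_r$ depend on $N_r$ itself, the interval cannot be treated as independent of $N_r$, and the inequality must be extracted from the stopping-time/compensator structure of the random time change, together with the localization to $K$ needed to tame $\lambda^V_r$ where the underlying $\lambda^V_{y\to y'}$ is unbounded. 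Once these two points are secured, the remaining estimates are routine applications of Lemma~\ref{lem:convergence_hatlambda}, Theorem~\ref{thm:classical}, and Gronwall's inequality as indicated.
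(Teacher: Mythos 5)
Your proposal is correct and follows essentially the same route as the paper's proof (via Theorem~\ref{thm:expectation_conv}): the identity $P(Y^V(t)\neq Y(t))=E[\|Y^V(t)-Y(t)\|_\infty]$, the coupling through the shared Poisson clocks, the reduction to the expected difference of cumulative hazards split into a $V$-approximation term (Lemma~\ref{lem:convergence_hatlambda}), a Lipschitz term in $X^{V}/V$ versus $Z$, and a self-referential disagreement term closed by Gronwall, followed by $V\to\infty$ and then $\varepsilon\downarrow0$. The only cosmetic differences are that the paper localizes by clipping $X^V$ to the $\varepsilon$-tube around $VZ$ (the process $X^{V,\varepsilon}$ and the auxiliary $\hat Y^V$) rather than stopping at the exit from your compact $K$, and it phrases your optional-sampling step as the distributional identity for increments of a unit-rate Poisson process.
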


\begin{remark}\label{rem:positive}
 If we consider mass-action kinetics, then the deterministic solutions never touch the boundaries, provided that the initial condition is strictly positive \cite{S:2001}. In this case, the existence of $m$ as assumed in Theorem~\ref{cor:fddconv} is then guaranteed by $z^*\in\RR^d_{>0}$.
\end{remark}

\begin{remark}\label{rem:fddconv}
 Theorem~\ref{cor:fddconv} implies finite dimensional distribution convergence of $Y^V$ to $Y$ in the following sense: for all $0\leq t_1< t_2<\dots<t_n\leq T$ we have
 \begin{equation*}
  P\left(\max_{1\leq i\leq n} \|Y^V(t_i)-Y(t_i)\|_\infty >0\right)\leq \sum_{i=1}^n P\left(\|Y^V(t_i)-Y(t_i)\|_\infty >0\right),
 \end{equation*}
 and the latter tends to 0 as $V$ tends to $\infty$, under the conditions of Theorem~\ref{cor:fddconv}.
\end{remark}

Some simulations of the process $Y$ are proposed in Figure~\ref{fig:singletraj} for the case of the SIS model \eqref{eq:SIS}.
 \begin{figure}
     \centering
     \includegraphics[width=\textwidth]{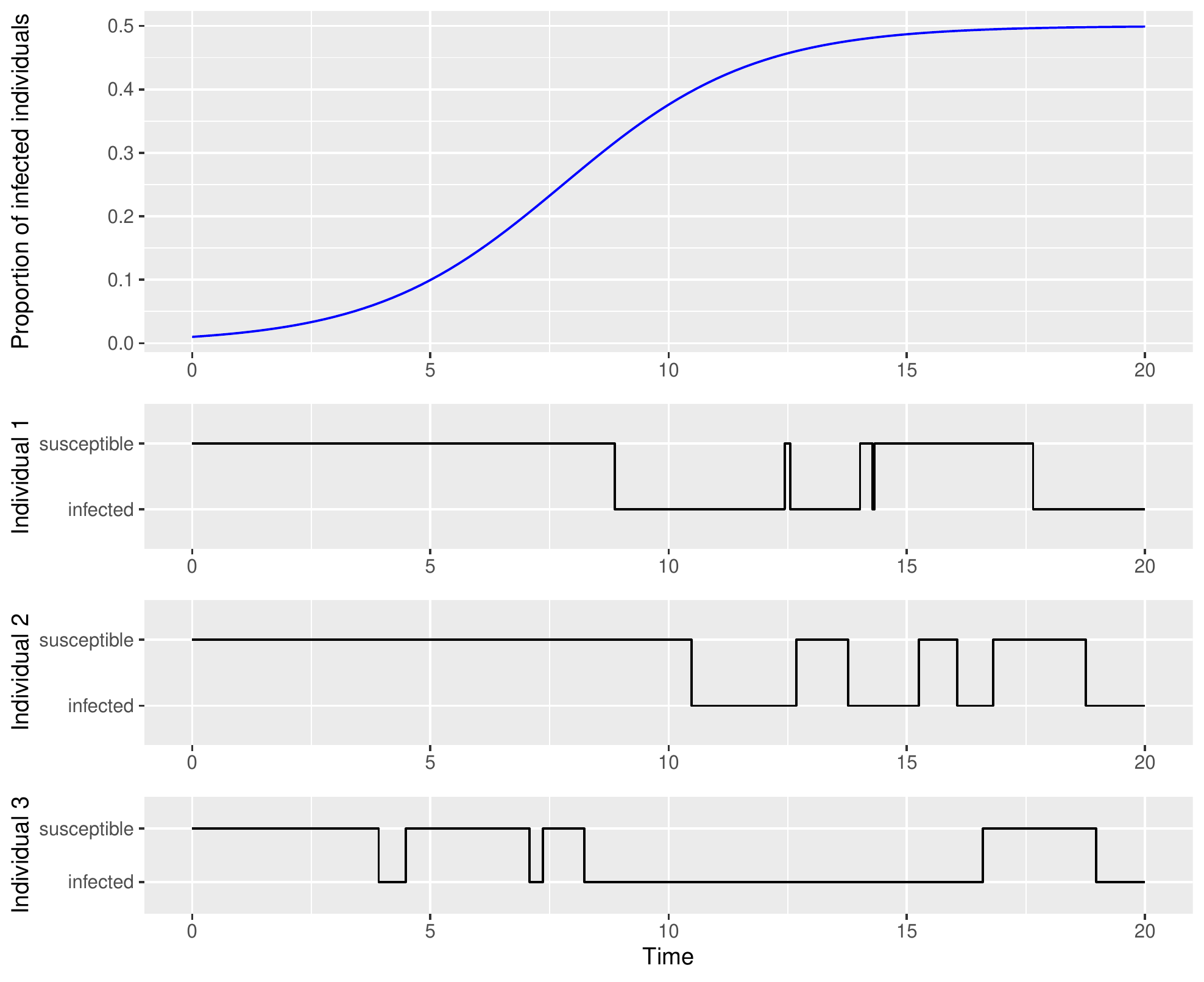}
     \caption{\textbf{The process $Y$ in SIS model.} Consider the model \eqref{eq:SIS}, and let $Y$ be as in \eqref{eq:tildeY_kurtznotation}. The first panel shows the concentration of infected individuals $Z_I$ according to the deterministic solution to \eqref{eq:drn} with $Z_S(0)=0.99$ and $Z_I(0)=0.01$. Mass-action kinetics is assumed, with the rate constants of $S+I\to2I$ and $I\to S$ being 1 and $0.5$, respectively. According to \eqref{eq:tildeY_kurtznotation}, $Z_I$ determines the rate at which the single-individual process $Y$ turns from 'susceptible' to 'infected'. The last three panels show independent realizations of $Y$. The times in the x-axes of the four panels are aligned.}
     \label{fig:singletraj}
 \end{figure}
We conclude this section with the following result, concerning the convergence of $Y^V$ to $Y$ as processes with sample paths in $D_{\track}[0,T]$. We note how this result is necessary for the convergence of continuous functionals of $D_{\track}[0,T]$, as highlighted in Section~\ref{sec:applications_single}.

\begin{theorem}\label{thm:weak_conv}
 Assume that Assumption \ref{ass:classical_limit} holds. Furthermore, assume that the random variables $X^V(0)/V$ converge weakly to a constant $z^*$ as $V$ goes to infinity, and let $Z(0)=z^*$. Assume that the solution $Z$ to \eqref{eq:drn} with $Z(0)=z^*$ exists over the interval $[0,T]$ and that
 \begin{equation*}
     m=\min_{\substack{S\in\Sp\\u\in[0,T]}}Z_S(u)>0.
 \end{equation*}
 Finally, assume that $Y^V(0)=Y(0)$ for all positive integers $V$. Then $Y^V$ converges in probability to $Y$ as processes with sample paths in $D_{\track}[0,T]$ (where we identify $\track$ with the elements of the canonical basis of $\RR^{|\track|}$ and embed it with the metric $\|\cdot\|_\infty$, or any equivalent one).
\end{theorem}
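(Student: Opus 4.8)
The plan is to work with the simple metric for the $J_1$ topology, namely $d(f,g)=\inf_{\lambda}\max\{\|\lambda-\mathrm{id}\|_\infty,\ \sup_{t}\|f(t)-g(\lambda(t))\|_\infty\}$, where $\lambda$ ranges over increasing homeomorphisms of $[0,T]$; since this metric and the usual complete metric induce the same topology, it suffices to show $d(Y,Y^V)\to0$ in probability. First I would record two preliminary facts. Weak convergence of $X^V(0)/V$ to the constant $z^*$ is equivalent to convergence in probability to $z^*$, so Theorem~\ref{cor:fddconv} applies and gives $\sup_{t\in[0,T]}P(Y^V(t)\neq Y(t))\to0$. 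Second, Theorem~\ref{thm:classical} together with $m>0$ yields a uniform jump-rate bound: on the high-probability event $E_V=\{\sup_{t\in[0,T]}\|X^V(t)/V-Z(t)\|_\infty<\rho\}$ with $\rho<m/2$, every coordinate $X^V_{\sigma(\trackS)}(t)$ exceeds $Vm/2$, so by the explicit form of $\lambda^V_{\trackS+y\to\trackS'+y'}$ and Lemma~\ref{lem:convergence_hatlambda} the total rate at which $Y^V$ jumps is bounded by a constant $C$ uniformly in large $V$; the same bound holds for $Y$ because $Z$ stays in a compact subset of $\RR^d_{>0}$.

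The core of the argument is a grid comparison. Fix $\varepsilon,\eta>0$ and a grid $0=u_0<\dots<u_M=T$ of mesh $h<\varepsilon$. Consider the event $G_V$ on which (a) $Y^V(u_i)=Y(u_i)$ for every $i$, and (b) neither $Y$ nor $Y^V$ makes more than one jump in any cell $[u_{i-1},u_i]$. On $G_V$, in each cell the two paths share the same endpoints, so either both are constant and equal, or both perform the identical single transition $a\to b$ at times lying in the same cell; matching these jump times by a piecewise-linear time change $\lambda$ that fixes the grid points produces $\|\lambda-\mathrm{id}\|_\infty\le h$ and $Y=Y^V\circ\lambda$, whence $d(Y,Y^V)\le h<\varepsilon$. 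Thus $P(d(Y,Y^V)>\varepsilon)\le P(G_V^c)$.

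It remains to bound $P(G_V^c)$ in the right order of limits. For (b) I would use the rate bound: on $E_V$ the number of jumps of $Y^V$ in a cell is stochastically dominated by a Poisson variable of mean $Ch$, so the probability that some cell contains two or more jumps is at most $M(Ch)^2\lesssim TC^2h$, uniformly in large $V$, and likewise for $Y$; hence $h$ can be chosen (depending only on $\varepsilon,\eta$) so small that this contribution, together with $P(E_V^c)$, is $<\eta/2$ for all large $V$. With $h$ and therefore $M$ now fixed, for (a) I would invoke the finite-dimensional estimate $P(\exists\, i:\ Y^V(u_i)\neq Y(u_i))\le (M+1)\sup_{t}P(Y^V(t)\neq Y(t))\to0$, which is $<\eta/2$ once $V$ is large. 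Combining gives $P(d(Y,Y^V)>\varepsilon)<\eta$ for all large $V$, as required.

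The main obstacle is step (b): finite-dimensional agreement by itself is far too weak, since a short excursion of $Y^V$ between grid points would leave the finite-dimensional distributions essentially unchanged yet keep $d(Y,Y^V)$ bounded below. Ruling out such spurious or clustered jumps is exactly a tightness statement, and it is here that the hypothesis $m>0$ is indispensable: the single-molecule rates carry the factor $y_{\sigma(\trackS)}/x_{\sigma(\trackS)}$ from $\theta_{y}$, which could blow up as a species count approaches zero, and the uniform rate bound $C$ (hence the control of multiple jumps per cell) is available precisely because $Z$, and therefore $X^V/V$ on $E_V$, stays bounded away from the boundary. A minor technical point to dispatch is the possibility of a jump landing exactly on a grid point; this has probability zero for $Y$ and can be absorbed into the small-probability events, or avoided by choosing the grid off the almost surely countable set of jump times.
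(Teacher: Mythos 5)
Your argument is correct, and it reaches the conclusion by a more hands-on route than the paper. The paper's proof has two abstract ingredients: it establishes relative compactness of $\{Y^V\}$ in $D_{\track}[0,T]$ via Corollary~7.4 of Chapter~3 of \cite{EK:1986} (compact containment is free since $\track$ is finite, and the modulus condition reduces to showing that the minimal gap between consecutive jumps does not collapse), and then invokes Lemma~A2.1 of \cite{DK:1996} to upgrade pointwise convergence in probability (from Theorem~\ref{cor:fddconv}) plus relative compactness to convergence in probability in $D_{\track}[0,T]$. You instead construct the Skorokhod time change explicitly on a grid: endpoint agreement at the grid points (again from the uniform-in-$t$ estimate of Theorem~\ref{cor:fddconv}) together with at most one jump per cell forces the two paths to perform identical transitions in each cell, and aligning the jump times gives $d(Y,Y^V)\le h$. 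The probabilistic content is the same in both proofs --- the hypothesis $m>0$ yields, on the high-probability event where $X^V/V$ tracks $Z$, a uniform bound on the single-molecule jump rates (this is exactly where the factor $y_{\sigma(\trackS)}/x_{\sigma(\trackS)}$ would otherwise be dangerous), and hence Poisson domination of the jump counts on short intervals; the paper uses this to bound the probability that two jumps occur within $\delta$ of each other, you use it to bound the probability of two jumps in one grid cell, and the resulting bounds are of the same form. What your version buys is self-containedness: it avoids citing the two external lemmas and makes the time change visible, at the cost of a slightly more delicate bookkeeping of the order of limits ($\rho$, then $h$, then $V$) and of the measure-zero issue of jumps landing on grid points, both of which you handle correctly. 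One point worth making fully precise in a write-up is the stochastic domination step ``on $E_V$ the number of jumps of $Y^V$ in a cell is dominated by a Poisson variable of mean $Ch$'': since $E_V$ is not adapted, the clean statement is obtained by comparing with the auxiliary process driven by the capped state $X^{V,\varepsilon}$ (as the paper does with $\hat X^{V,\varepsilon}$ and the rate $B_\nu$), which coincides with $Y^V$ on $E_V$ and whose jump counts are genuinely Poisson-dominated by the random time change representation.
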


The proof is given in Appendix~\ref{sec:proof}.

\subsection{Applications of Theorem~\ref{thm:weak_conv}}\label{sec:applications_single}

The convergence of Theorem~\ref{thm:weak_conv} allows us to state convergence in probability of $f(Y^V)$ to $f(Y)$, where $f\colon D_{\track}[0,T]\to \RR$ is a functional that is continuous with respect to the Skorokhod $J_1$ topology. Classical examples are $f(x)=\sup_{t\in[0,T]}\|x(t)\|_\infty$, $f(x)=\int_0^T \phi(x(s))ds$ for some continuous function $\phi$, or $f(x)=\sup_{t\in[0,T]} (x(t)-x(t-))$ where $x(t-)=\lim_{h\uparrow t}x(h)$ (see for example \cite[Chapter 3]{EK:1986}). More concretely, a functional we may want to consider is the number of times an individual gets infected in the interval $[0,T]$, assuming the model of equation \eqref{eq:SIS} is in place. We denote this functional by $\psi$. Note that the convergence of $X^V/V$ to its deterministic fluid limit, as stated in Theorem~\ref{thm:classical}, does not give any mean of inferring the distribution of $\psi(Y^V)$. However, knowing that $\psi(Y^V)$ converges in probability to $\psi(Y)$, if $V$ is large enough we can approximate the distribution of the former by the distribution of the latter. Obtaining an estimate of the distribution of $\psi(Y)$ only requires the simulation of enough independent copies of $Y$, whose jump rates are deterministic and therefore do not require a simulation of $X^V$ to be computed, as opposed to the much more expensive strategy of simulating multiple independent trajectories of $(Y^V,X^V)$ via the Gillespie algorithm (which is especially cumbersome for large values of $V$). The empirical distributions obtained with he two strategies are compared in Figure~\ref{fig:ninf}.
 \begin{figure}
     \centering
     \includegraphics[width=\textwidth]{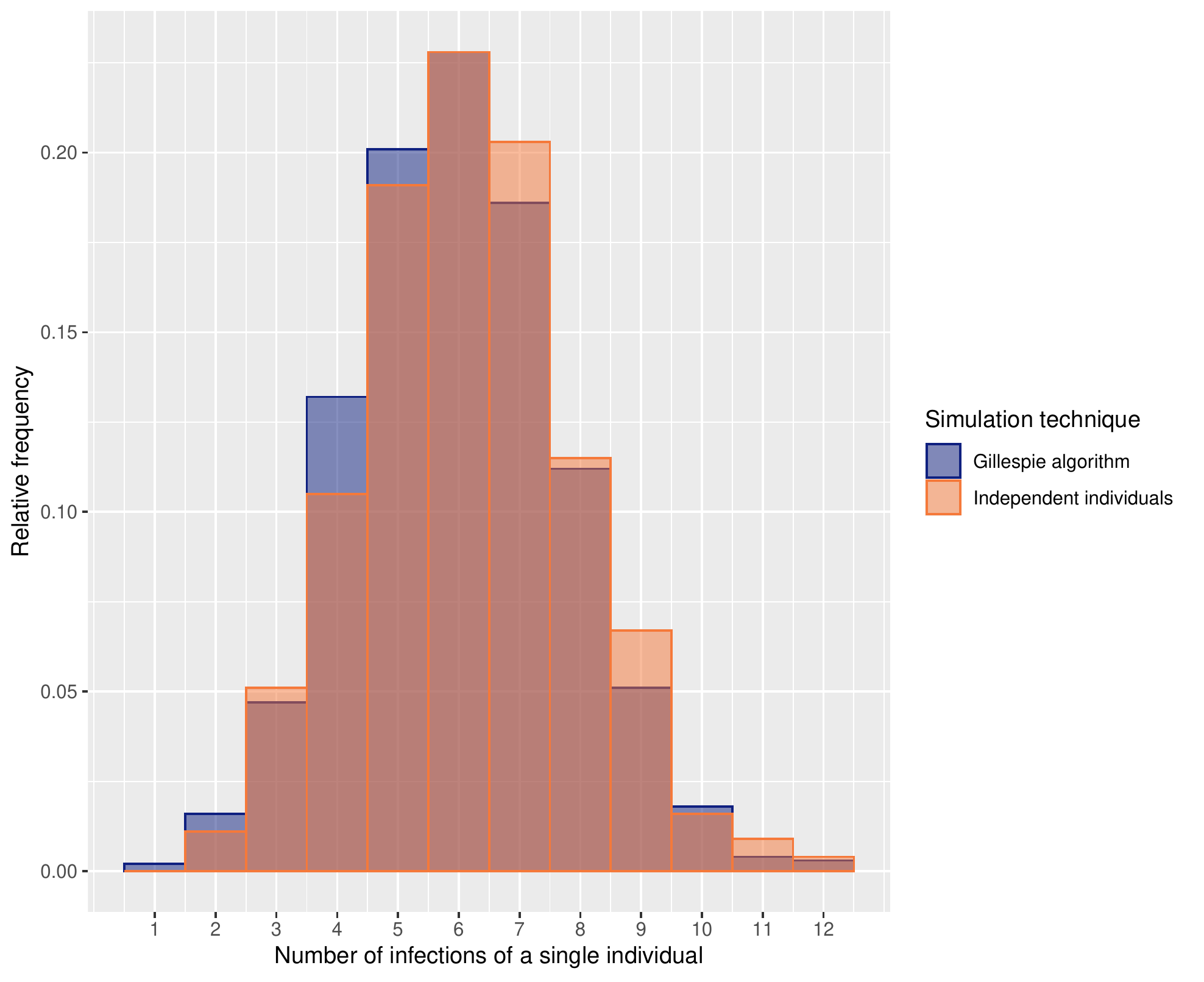}
     \caption{\textbf{Empirical distribution of number of infections in SIS model.} Consider the model \eqref{eq:SIS}, and let $\psi$ be the number of infections a randomly selected individual undergoes up to time $T$. The empirical distributions of $\psi(Y^V)$ and $\psi(Y)$ are compared, the former obtained by the simulation of 1,000 independent copies of $(Y^V,X^V)$ via the Gillespie algorithm (applied to the formulation in terms of usual stochastic reaction networks discussed in Section~\ref{sec:cast}), and the latter obtained via the simulation of 1,000 copies of $Y$. Here, $V=1,000$ and the initial portion of infected individuals is $1\%$ (so we are initially close to the boundary and we may expect some minor discrepancy between $X^V/V$ and its deterministic limit $Z$, see also Figure~\ref{fig:sis}). Mass-action kinetics is assumed, with the rate constants of $S+I\to2I$ and $I\to S$ being 1 and $0.5$, respectively.}
     \label{fig:ninf}
 \end{figure}
 Similarly, we can apply our results to a Michaelis-Menten mechanism. Consider the model
 \begin{equation}\label{eq:futileMM}
  E+S\ce{<=>}C\ce{->}E+P,\quad P\ce{->} S,
 \end{equation}
 where the enzyme activities counterbalances a spontaneous transformation of molecules of type $P$ into molecules of type $S$. To measure the activity level of the enzymes, we may want to study for how long a randomly chosen enzyme molecule is in bound state $C$ up to a given time $T$. Let us call this quantity $\upsilon(Y^V)$. The classical scaling of Theorem~\ref{thm:classical} does not allow for inference of the distribution of $\upsilon(Y^V)$, but Theorem~\ref{thm:weak_conv} ensures that it converges to the distribution of $\upsilon(Y)$ as $V$ tends to $\infty$. Figure~\ref{fig:mm} compares the empirical distributions of $\upsilon(Y^V)$ and $\upsilon(Y)$ obtained by the simulation of $1,000$ independent copies of $(Y^V,X^V)$ and $1,000$ independent copies of $Y$, respectively. For this comparison we chose $V=1,000$.
  \begin{figure}
     \centering
     \includegraphics[width=\textwidth]{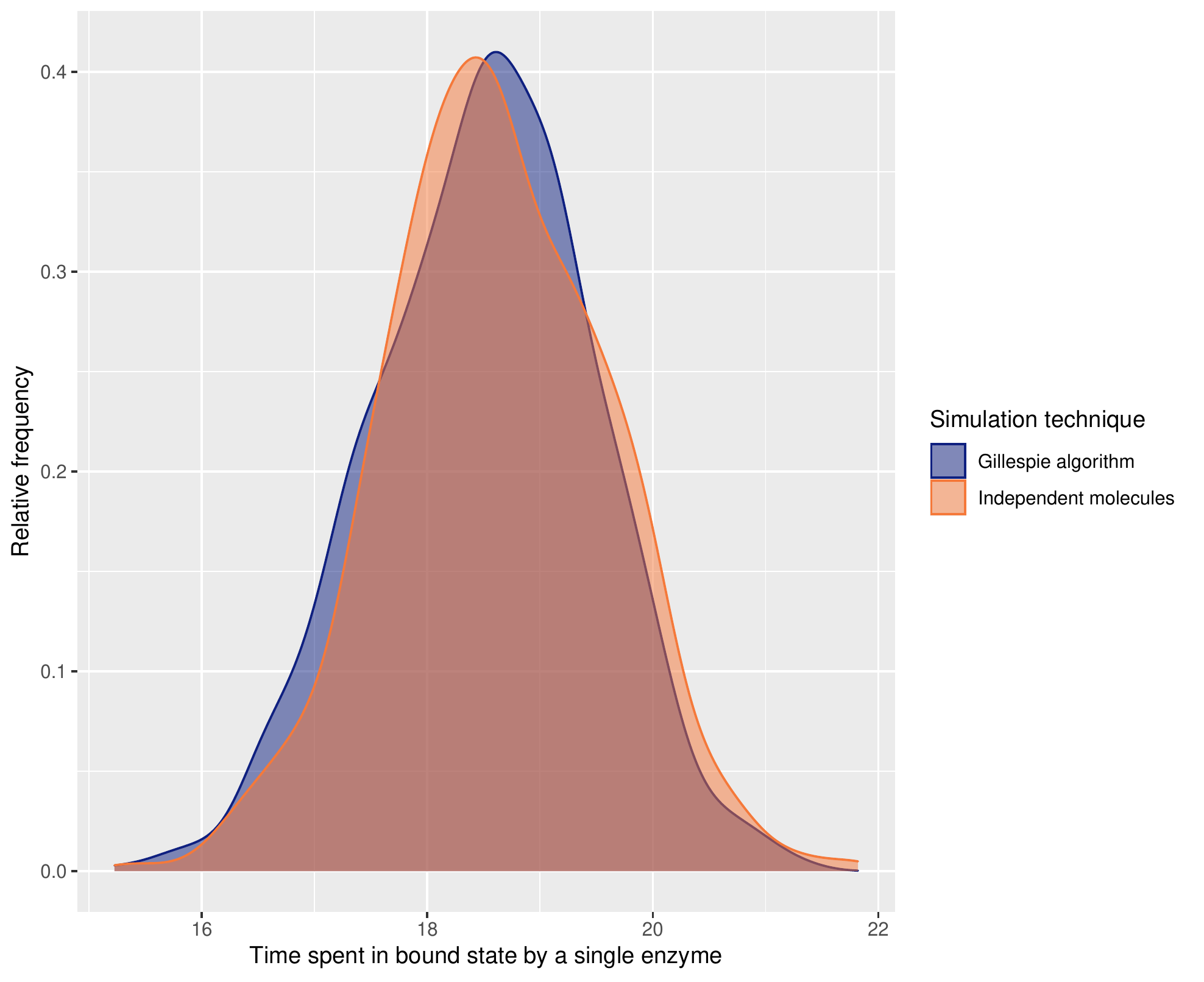}
     \caption{\textbf{Empirical density of time in bound state in Michaelis-Menten model.} Consider the model \eqref{eq:futileMM}, and let $\upsilon$ be the time a randomly selected molecule of enzyme is in bound state $C$ up to time $T$. The empirical distributions of $\upsilon(Y^V)$ and $\upsilon(Y)$ are compared, the former obtained by the simulation of 1,000 independent copies of $(Y^V,X^V)$ via the Gillespie algorithm (applied to the formulation in terms of usual stochastic reaction networks discussed in Section~\ref{sec:cast}), and the latter obtained via the simulation of 1,000 copies of $Y$. Here, $V=1,000$ and $Z(0)=X(0)/V=(0.5, 10, 0.5, 1)$,  where the species are ordered as in $E, S, C, P$. Mass-action kinetics is assumed, with the rate constants of $E+S\to C$, $C\to E+S$, $C\to E+P$, and $P\to S$ being 1, 5, 1, and $0.5$, respectively.}
     \label{fig:mm}
 \end{figure}
 
\subsection{Approximating the  system dynamics with single-molecule trajectories}\label{sec:aggregate}

Let $\tracksub\subseteq\Sp$ be the set of \edit{\emph{tracked species}, i.e.\ the set of chemical species whose molecules (or parts thereof) can be tracked:}
\begin{equation*}
 \tracksub=\{S\in\Sp\,:\,S=\sigma(\trackS)\text{ for some }\trackS\in\track\setminus\{\Delta\}\}.
\end{equation*}
Moreover, let $\proj\colon\RR^d\to\RR^{\vert \tracksub\vert}$ be the projection of the state space onto the coordinates relative to the species in $\tracksub$. The aim of this section is to approximate the dynamics of $\proj(X^V)$ by means of a sum of \emph{independent} processes distributed as in \eqref{eq:tildeY_kurtznotation} (potentially with rescaled dynamics, as shown in the statement of Theorem~\ref{thm:aggregate}). Note that the goal of such an approximation is not to provide a faster simulation method than those present in the literature: our goal is to break down the dynamics of several correlated particles into a set of independent single-molecule trajectories which could be simulated simultaneously by a highly parallelizable algorithm. We begin by identifying each status $\trackS\in\track\setminus\{\Delta\}$ with a different \edit{part of the molecules of} the species $\sigma(\trackS)$: $m$ molecules of species $S\in\Sp$ are available at time $t$ if and only if \edit{for all status $\trackS$ with $\sigma(\trackS)=S$ the quantity of the tracked molecules in status $\trackS$} is $m$ at time $t$. Under this assumption, clearly the process $X^V$ can be expressed in terms of \edit{the status changes of its tracked molecules}, which are typically not independent of each other. We further restrict ourselves to models that are \emph{sub-conservative} with respect to the \edit{tracked molecules}. This means that while \edit{a tracked molecule} can potentially be degraded \edit{(by changing its status to $\Delta$)}, their total mass never increases. Equivalently, we assume that each time a \edit{tracked molecule} is created it is by transformation of another \edit{molecule}. We assume sub-conservativeness \edit{for simplicity: we want to consider independent single-molecule fates, whose} agglomeration is still able to approximately describe the dynamics of the whole system. If we allowed for mass creation, we would need to introduce new molecules over time and track them. Defining the molecule creation times over a finite interval of time independently on each other is technically possible if the creation rate \edit{changes deterministically}: it is sufficient to first simulate a Poisson random variable counting the total number of new molecules in the finite time interval, then consider each creation time as independent of the others with probability density proportional to the deterministic creation rate. However, this procedure requires the introduction of further notation and for the sake of clarity we decided to only present the simpler case of sub-conservative models (with respect to the status).
\begin{assumption}\label{ass:subconservative}
 Let $(Y^V, X^V)$ be a family of tracking stochastic reaction systems. We assume that for each reaction $y\to y'\in\Rc$ and for each $\trackS'\in\track\setminus\{\Delta\}$
 \begin{equation*}
     \sum_{\trackS, \in\track\setminus\{\Delta\}}y_{\sigma(\trackS)}p_{y\to y'}(\trackS, \trackS')=y'_{\sigma(\trackS')}
 \end{equation*}
\end{assumption}
For all $S\in\tracksub, \trackS\in\track\setminus\{\Delta\}$ define
 \begin{equation*}
  \sigma^{-1}(S)=\{\trackS'\in\track\,:\,\sigma(\trackS')=S\}\quad\text{and}\quad\alpha(S)=\#\sigma^{-1}(S)
%   \beta(\trackS,\trackS', v)&=\mathbbm{1}_{\supp(v)}(\sigma(\trackS'))\frac{\alpha(\trackS)}{v_{\sigma(\trackS')}|\{\trackS''\in\track\,:\,v_{\sigma(\trackS'')}\neq 0\}|}.
 \end{equation*}
 The sub-conservation of the model with respect to the \edit{tracked molecules} is formally stated as follows.
 
\begin{lemma}\label{lem:subconservative}
 Let $(Y^V, X^V)$ be a family of tracking stochastic reaction systems satisfying Assumption~\ref{ass:subconservative}. Then, for all $V\in\ZZ_{\geq1}$ and for all $t\in\RR_{>0}$
 \begin{equation}\label{eq:subconservation}
  \|\pi(X^V(t))\|_1\leq \sum_{S\in\tracksub}\alpha(S)X^V_S(t)\leq \sum_{S\in\tracksub}\alpha(S)X^V_S(0).
 \end{equation}
\end{lemma}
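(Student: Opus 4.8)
The plan is to prove the two inequalities separately. The left one is essentially bookkeeping, while the right one amounts to showing that the linear functional $Q(x)=\sum_{S\in\tracksub}\alpha(S)x_S$ is non-increasing along every trajectory of $X^V$, which is where Assumption~\ref{ass:subconservative} enters.

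For the left-hand inequality I would first note that, since $X^V$ is $\ZZ_{\geq0}^d$-valued, $\|\pi(X^V(t))\|_1=\sum_{S\in\tracksub}X^V_S(t)$. By the very definition of $\tracksub$, every $S\in\tracksub$ is the image under $\sigma$ of at least one status in $\track\setminus\{\Delta\}$, so $\alpha(S)=\#\sigma^{-1}(S)\geq1$. A term-by-term comparison then yields $\sum_{S\in\tracksub}X^V_S(t)\leq\sum_{S\in\tracksub}\alpha(S)X^V_S(t)$.

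For the right-hand inequality I would apply the linear functional $Q$ to the Poisson representation \eqref{eq:X_kurtznotation}, obtaining
\[
 Q(X^V(t))=Q(X^V(0))+\sum_{y\to y'\in\Rc}Q(y'-y)\,N_{y\to y'}\!\left(\int_0^t\lambda^V_{y\to y'}(X^V(s))\,ds\right).
\]
Because each $N_{y\to y'}$ is non-negative, it suffices to check that $Q(y'-y)\leq0$, i.e.\ $Q(y')\leq Q(y)$, for every reaction $y\to y'\in\Rc$; summing this over the non-negative Poisson increments then gives $Q(X^V(t))\leq Q(X^V(0))$, which is exactly the right-hand inequality. To prove $Q(y')\leq Q(y)$ I would use the reindexing identity $\sum_{\trackS\in\track\setminus\{\Delta\}}y_{\sigma(\trackS)}=\sum_{S\in\tracksub}\alpha(S)y_S$ (grouping the status by the species onto which $\sigma$ maps them), and similarly for $y'$. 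Summing the equality of Assumption~\ref{ass:subconservative} over all $\trackS'\in\track\setminus\{\Delta\}$ turns its right-hand side into $\sum_{\trackS'}y'_{\sigma(\trackS')}=Q(y')$, while swapping the order of summation on its left-hand side produces $\sum_{\trackS}y_{\sigma(\trackS)}\bigl(\sum_{\trackS'}p_{y\to y'}(\trackS,\trackS')\bigr)$.

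The crux is then to bound the inner probability sum. For each $\trackS$ with $y_{\sigma(\trackS)}>0$, so that $\sigma(\trackS)\in\supp(y)$, the normalization in Definition~\ref{def:srswts} gives $\sum_{\trackS'\in\track\setminus\{\Delta\}}p_{y\to y'}(\trackS,\trackS')=1-p_{y\to y'}(\trackS,\Delta)\leq1$, the missing mass being carried by the degradation state $\Delta$; the remaining terms vanish because $y_{\sigma(\trackS)}=0$. Hence $Q(y')\leq\sum_{\trackS}y_{\sigma(\trackS)}=Q(y)$. I expect this last step to be the only delicate point: one must handle the degradation probability $p_{y\to y'}(\trackS,\Delta)$ carefully, since it is precisely this term that converts the \emph{conservation} identity of Assumption~\ref{ass:subconservative} into the desired \emph{sub}-conservation inequality. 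Everything else is routine reindexing and the non-negativity of the counting processes.
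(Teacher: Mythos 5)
Your proposal is correct and follows essentially the same route as the paper: the first inequality from $\alpha(S)\geq 1$, and the second by showing that the weighted count $\sum_{S\in\tracksub}\alpha(S)x_S$ cannot increase under any reaction, using the reindexing $\sum_{\trackS\in\track\setminus\{\Delta\}}y_{\sigma(\trackS)}=\sum_{S\in\tracksub}\alpha(S)y_S$, Assumption~\ref{ass:subconservative}, and the bound $\sum_{\trackS'\in\track\setminus\{\Delta\}}p_{y\to y'}(\trackS,\trackS')\leq 1$. The only cosmetic difference is that you package the monotonicity through the Poisson representation \eqref{eq:X_kurtznotation} while the paper argues jump by jump; the underlying computation is identical.
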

\begin{proof}
 The first inequality of \eqref{eq:subconservation} simply follows from the fact that the quantities $\alpha(S)$ are greater than or equal to 1. For the second inequality, simply note that if a reactions $y\to y'\in\Rc$ occurs at time $t$, then
 \begin{align*}
  \sum_{S\in\tracksub}\alpha(S)X^V_S(t)-\sum_{S\in\tracksub}\alpha(S)X^V_S(t-)&=\sum_{S\in\tracksub}\alpha(S)y'_S-\sum_{S\in\tracksub}\alpha(S)y_S\\
  &\hspace{-40pt}=\sum_{\trackS'\in\track\setminus\{\Delta\}}y'_{\sigma(\trackS')}-\sum_{\trackS\in\track\setminus\{\Delta\}}y_{\sigma(\trackS)}\\
  &\hspace{-40pt}=\sum_{\trackS'\in\track\setminus\{\Delta\}}\sum_{\trackS, \in\track\setminus\{\Delta\}}y_{\sigma(\trackS)}p_{y\to y'}(\trackS, \trackS')-\sum_{\trackS\in\track\setminus\{\Delta\}}y_{\sigma(\trackS)}\\
  &\hspace{-40pt}\leq \sum_{\trackS\in\track\setminus\{\Delta\}}y_{\sigma(\trackS)}-\sum_{\trackS\in\track\setminus\{\Delta\}}y_{\sigma(\trackS)}=0.
 \end{align*}
 Note that in the third equality we used Assumption~\ref{ass:subconservative}, and in the last equality we used
 $$\sum_{\trackS'\in\track\setminus\{\Delta\}}p_{y\to y'}(\trackS, \trackS')\leq 1.$$
 Since the quantity $\sum_{S\in\tracksub}\alpha(S)X^V_S$ is not increasing with the occurrence of a reaction, \eqref{eq:subconservation} is proven.
\end{proof}

The main result of this section is the following one, a more detailed version of which is proven in the Appendix. In particular, in Theorem~\ref{thm:aggrconv} a convergence rate of the order of $e^{-C\sqrt{V}}$ for a positive constant $C$ is proven, provided that the initial conditions of $X^V$ and $\tr{X}^V$ are close enough.

\begin{theorem}\label{thm:aggregate}
 Assume that Assumptions~\ref{ass:classical_limit} and \ref{ass:subconservative} are satisfied, and consider a family of tracking stochastic reaction systems $(Y^V, X^V)$. Assume that $V^{-1}X^V(0)$ converges in distribution to some $z^*\in\RR^d_{>0}$ as $V$ goes to infinity and $E[\pi(X^V(0))]<\infty$ for all $V\in\ZZ_{\geq1}$. \edit{Assume that the solution $Z$ to \eqref{eq:drn} with $Z(0)=z^*$ exists over the interval $[0,T]$.} Let $\tr{X}^V(0)=\floor{Vz^*}$ and define the process $\tr{X}^V$ by
 \begin{equation}\label{eq:definition_trX}
  \tr{X}^V(t)=\sum_{\trackS\in\track\setminus\{\Delta\}}\sum_{i=1}^{\tr{X}^V_{\sigma(\trackS)}(0)}\frac{\sigma(Y^{\trackS, i}(t))}{\alpha(\sigma(Y^{\trackS, i}(t)))},
 \end{equation}
 where the processes $(Y^{\trackS, i})_{\trackS\in\track\setminus\{\Delta\}, i\in\ZZ_{\geq1}}$ are independent and satisfy
 \begin{equation*}
  Y^{\trackS, i}(t)=\trackS+\sum_{\trackS'+y\to \trackS''+y'\in\trackRc}(\trackS''-\trackS')N^{\trackS, i}_{\trackS'+y\to \trackS''+y'}\left(\int_0^t\lambda_{\trackS'+y\to \trackS''+y'}(Y^{\trackS,i}(u),Z(u))du\right),
 \end{equation*}
 for a family of independent, identically distributed unit-rate Poisson processes $\{N^{\trackS, i}_r\}_{\trackS\in\track\setminus\{\Delta\}, i\in\ZZ_{\geq1}, r\in\trackRc}$. Then,
 \begin{equation*}
  \lim_{V\to\infty} E\left[\sup_{0\leq s\leq t}\left\|\frac{\proj(X^V(t))}{V}-\frac{\tr{X}^V(t)}{V}\right\|\right]=0.
 \end{equation*}
\end{theorem}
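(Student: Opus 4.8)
The plan is to realize both $\proj(X^V)$ and $\tr{X}^V$ on a single probability space as superpositions of \emph{individual} molecular-part trajectories driven by a common family of Poisson processes, and then to control the discrepancy part-by-part. The starting observation is that Assumption~\ref{ass:subconservative} (via Lemma~\ref{lem:subconservative}) guarantees that tracked molecules are never created from nothing: the pool of tracked parts is fixed at time $0$ up to degradation into $\Delta$. Consequently, identifying each status $\trackS$ with $X^V_{\sigma(\trackS)}(0)$ parts and following each part through its transitions, the casting of Section~\ref{sec:cast} yields the \emph{exact} pathwise decomposition $\proj(X^V(t))=\sum_{\trackS\in\track\setminus\{\Delta\}}\sum_{i=1}^{X^V_{\sigma(\trackS)}(0)}\sigma(Y^{V,\trackS,i}(t))/\alpha(\sigma(Y^{V,\trackS,i}(t)))$, where $Y^{V,\trackS,i}$ is the status of the $i$-th part initially in $\trackS$. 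Because the rate of each channel $\trackS'+y\to\trackS''+y'$ in Section~\ref{sec:cast} is linear in the count $\tr{x}_{\trackS'}$, the aggregate process is the superposition of independent per-part clocks, so each $Y^{V,\trackS,i}$ can be assigned its \emph{own} Poisson processes $N^{\trackS,i}_r$, $r\in\trackRc$, firing at the per-part rate $\lambda^V_{r}(Y^{V,\trackS,i}(s),X^V(s))$. I would then drive the approximating part $Y^{\trackS,i}$ of the statement with the \emph{same} $N^{\trackS,i}_r$, the only difference being that its rate uses the deterministic $Z(s)$ in place of $X^V(s)$.

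With this coupling in place, after matching the two index sets up to their initial discrepancy (the unmatched parts number $\|\proj(X^V(0))-\proj(\floor{Vz^*})\|$ and, once scaled by $V^{-1}$, vanish in $L^1$ by $V^{-1}X^V(0)\to z^*$ together with the moment hypothesis), the triangle inequality and the bound $\sup_{s\le t}\|\sigma(Y^{V,\trackS,i}(s))/\alpha(\cdot)-\sigma(Y^{\trackS,i}(s))/\alpha(\cdot)\|\le C\,\mathbbm{1}\{\exists s\le t:Y^{V,\trackS,i}(s)\neq Y^{\trackS,i}(s)\}$ give
\begin{equation*}
 E\Big[\sup_{s\le t}\Big\|\tfrac1V\proj(X^V(s))-\tfrac1V\tr{X}^V(s)\Big\|\Big]\le \frac{C}{V}\sum_{\trackS,i}P\big(\exists\,s\le t:\,Y^{V,\trackS,i}(s)\neq Y^{\trackS,i}(s)\big)+o(1).
\end{equation*}
The number of parts is $\sum_{S\in\tracksub}\alpha(S)\floor{Vz^*}_S=O(V)$, so it suffices to show that the \emph{per-part} decoupling probability tends to $0$ uniformly in $(\trackS,i)$ and $s\le T$.

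To bound the per-part decoupling probability I would introduce the good event $G^V=\{\sup_{s\le T}\|V^{-1}X^V(s)-Z(s)\|_\infty\le\delta\}$, with $\delta$ small enough that $V^{-1}X^V$ remains in a compact $K\subset\RR^d_{>0}$ around the curve $\{Z(s):s\le T\}$ (which stays in the open positive orthant, as guaranteed under mass-action by Remark~\ref{rem:positive}, so that the limiting rates $\lambda_r$ of Lemma~\ref{lem:convergence_hatlambda} are finite and Lipschitz there). By Theorem~\ref{thm:classical}, $P(G^V)\to1$. On $G^V$, two parts coupled through a common Poisson process decouple only when that process fires while their rates differ, and before the first decoupling time the two share a status $w$; by the standard Poisson coupling estimate the decoupling probability is controlled by $E\big[\int_0^t\sum_{r}|\lambda^V_{r}(w,X^V(s))-\lambda_{r}(w,Z(s))|\,ds\big]$. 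This integrand is uniformly small on $G^V$: Lemma~\ref{lem:convergence_hatlambda} controls $|\lambda^V_{r}(w,\floor{Vz})-\lambda_{r}(w,z)|$ uniformly on $K$, while the local Lipschitz property of $\lambda_r$ and $\|V^{-1}X^V-Z\|_\infty\le\delta$ control the remainder. Hence the per-part decoupling probability is at most some $\varepsilon_V\to0$ (this is exactly the single-molecule estimate underlying Theorem~\ref{cor:fddconv}, applied uniformly), so the right-hand side above is at most $\tfrac{C}{V}\cdot O(V)\cdot\varepsilon_V+o(1)\to0$. Off $G^V$ I would bound crudely: by Lemma~\ref{lem:subconservative}, $\sup_s\|\proj(X^V(s))\|_1\le\sum_S\alpha(S)X^V_S(0)$ and $\sup_s\|\tr{X}^V(s)\|_1\le\sum_S\alpha(S)\floor{Vz^*}_S$, so the integrand is dominated by a uniformly integrable multiple of $1+\|V^{-1}X^V(0)\|_1$, and $P((G^V)^c)\to0$ makes the off-event contribution vanish.

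The main obstacle is precisely the estimate on $G^V$: the feedback that each true part $Y^{V,\trackS,i}$ enters, through $X^V$, the very rates that drive all the other parts, so the parts are not independent and a single part cannot be analysed in isolation. The resolution is the separation of scales used above — the aggregate is pinned to $Z$ by Theorem~\ref{thm:classical}, an $O(1)$-in-probability statement that does not depend on any single part, and \emph{conditionally} on the aggregate being close to $VZ$ each part evolves like an autonomous single molecule; the circular dependence is therefore broken by first conditioning on $G^V$ and only then applying the single-part coupling. The remaining work is bookkeeping: the uniform-integrability input needed to pass from convergence in distribution of $V^{-1}X^V(0)$ to $L^1$ control of both the initial-discrepancy term and the off-event term, which is where the hypotheses $E[\proj(X^V(0))]<\infty$ and $V^{-1}X^V(0)\to z^*$ are used.
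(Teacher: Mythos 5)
Your argument rests on the claim that $\proj(X^V)$ admits an \emph{exact} pathwise decomposition $\proj(X^V(t))=\sum_{\trackS,i}\sigma(Y^{V,\trackS,i}(t))/\alpha(\sigma(Y^{V,\trackS,i}(t)))$ in which each part $Y^{V,\trackS,i}$ is driven by its \emph{own independent} Poisson clocks. These two properties are mutually incompatible except in special cases. When a reaction consumes more than one tracked part --- e.g.\ $E+S\to C$ in Example~\ref{ex:MM1}, where one $\tr{E}$-part becomes $\tr{C}_E$ and one $\tr{S}$-part becomes $\tr{C}_S$ at the \emph{same} reaction event, or $2P\to P+P^*$ in Example~\ref{ex:dimerization} --- the true labeled system changes the status of several parts simultaneously, so the per-part driving noises are necessarily correlated. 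If you instead insist on independent per-part clocks (which you need in order to transfer them to the approximating parts $Y^{\trackS,i}$, whose independence is required by the statement), the parts transition at distinct times and the sum $\sum_{\trackS,i}\sigma(Y^{V,\trackS,i}(t))/\alpha(\cdot)$ passes through states that are not of the form $\proj(x)$ for any $x\in\ZZ^d_{\geq0}$ (e.g.\ unequal numbers of $\tr{C}_E$- and $\tr{C}_S$-parts, giving a ``half molecule'' of $C$); the decomposition is then no longer exact and the rates, which depend on the full aggregate state, drift away from those of $X^V$. Your coupling works as stated only for networks such as SIS where each reaction changes the status of at most one tracked part. Repairing it in general would require either an exact decomposition with correlated clocks followed by a separate propagation-of-chaos step to replace the resulting correlated approximating family by an independent one, or an intermediate system; neither is supplied, and this is the substantive content of the theorem rather than bookkeeping. (The remaining steps --- the good event $G^V$, the per-part decoupling estimate \`a la Theorem~\ref{cor:fddconv}, and the uniform-integrability handling of the initial discrepancy and of $(G^V)^c$ via Lemma~\ref{lem:subconservative} --- are sound in themselves.)

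For contrast, the paper never decomposes the \emph{true} process into parts. It compares the two aggregates directly in their random-time-change representations: the reaction-level Poisson processes driving $X^V$ and the part-level ones driving $\tr{X}^V$ are each superposed into a single centered Poisson process controlled by the maximal inequality of Lemma~\ref{lem:cpp_classical}; Assumption~\ref{ass:subconservative} enters only through the algebraic identity
\begin{equation*}
\pi(y'-y)=\sum_{\trackS\in\track\setminus\{\Delta\}}\sum_{\trackS'\in\track}\left(\frac{\sigma(\trackS')}{\alpha(\sigma(\trackS'))}-\frac{\sigma(\trackS)}{\alpha(\sigma(\trackS))}\right)y_{\sigma(\trackS)}\,p_{y\to y'}(\trackS,\trackS'),
\end{equation*}
which matches the drift of $\proj(X^V)$ to that of $\tr{X}^V$ up to terms controlled by $\delta^{V,\varepsilon,t}_1$, $\omega^{\varepsilon,t}$ and $\|X^{V,\varepsilon}/V-\tr{X}^V/V\|_\infty$; a Gronwall argument then closes the estimate and yields the explicit $e^{-C\sqrt{V}}$ rate of Theorem~\ref{thm:aggrconv}. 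This aggregate-level route entirely sidesteps the simultaneous-transition obstruction on which your per-particle coupling founders.
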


Note that in the definition of $\tr{X}^V$ above we consider \edit{the number of} independent single-molecule trajectories \edit{to match the number of molecules (or parts thereof) of trackable species that} are in the system at time 0. A natural question is whether a good approximation of the original model $X^V$ can be obtained by considering the agglomeration of less independent single-molecule trajectories. However, a detailed study of the error in this case is out of the scope of the present paper.

% \begin{example}\label{ex:SI2}
% Consider again the SI reaction network presented already in the Introduction and in  Example~\ref{ex:SI}.  If we  focus on the   species $\tr{S}$, so  that  $\sigma(\tr{S}) =S$ and  $\proj(X^V(t))=X^V_S(t)$ and assume $\tr{X}^V_{S}(0)=V$, then 

% \begin{equation*}
%     \tr{X}^V(t)=\sum_{i=1}^V\sigma(Y^{\trackS, i}(t)),
% \end{equation*} where
% \begin{equation*}
%   Y^{\trackS, i}(t)=1-N^{\trackS, i}_{\trackS\to \Delta}\left(\int_0^t \beta Y^{\trackS,i}(u) z_u du\right),
% \end{equation*}
% and $z_t= 1+\rho -s_t$  with $s_t$ given by \eqref{eq:surv}.  From the above theorem and the independence of  $Y^{\trackS, i}$'s   it then follows that 
%  \begin{equation*}
%      X_S^V(t)/V \approx \tr{X}^V(t)/V \approx s_t
%  \end{equation*} 
%  %$$ \ell_{u}(N(t) ; 0 \leq t \leq u)=\sum_{i} \int_{0}^{u}\left(\log \left[\lambda_{i}(t-)\right] d N_{i}(t)-\lambda_{i}(t-) d t\right).$$
%   This approximation is illustrated in Figure~\ref{fig:si} 
%  \begin{figure}
%      \centering
%      \includegraphics[scale=.5]{SI_approx}
%      \caption{\textbf{Comparison in SI model.} The trajectory of  the $S$ species in the SI network (red) as compared to the deterministic function $s_t$ defined in \eqref{eq:surv} (grey) and the trajectory of  the average of $V$  independent  single species trajectories  $Y^{\trackS,i}$ (green)}
%      \label{fig:si}
%  \end{figure}
% \end{example}

\begin{example}\label{ex:SIS2}
  \begin{figure}[b!]
     \centering
     \includegraphics[width=\textwidth]{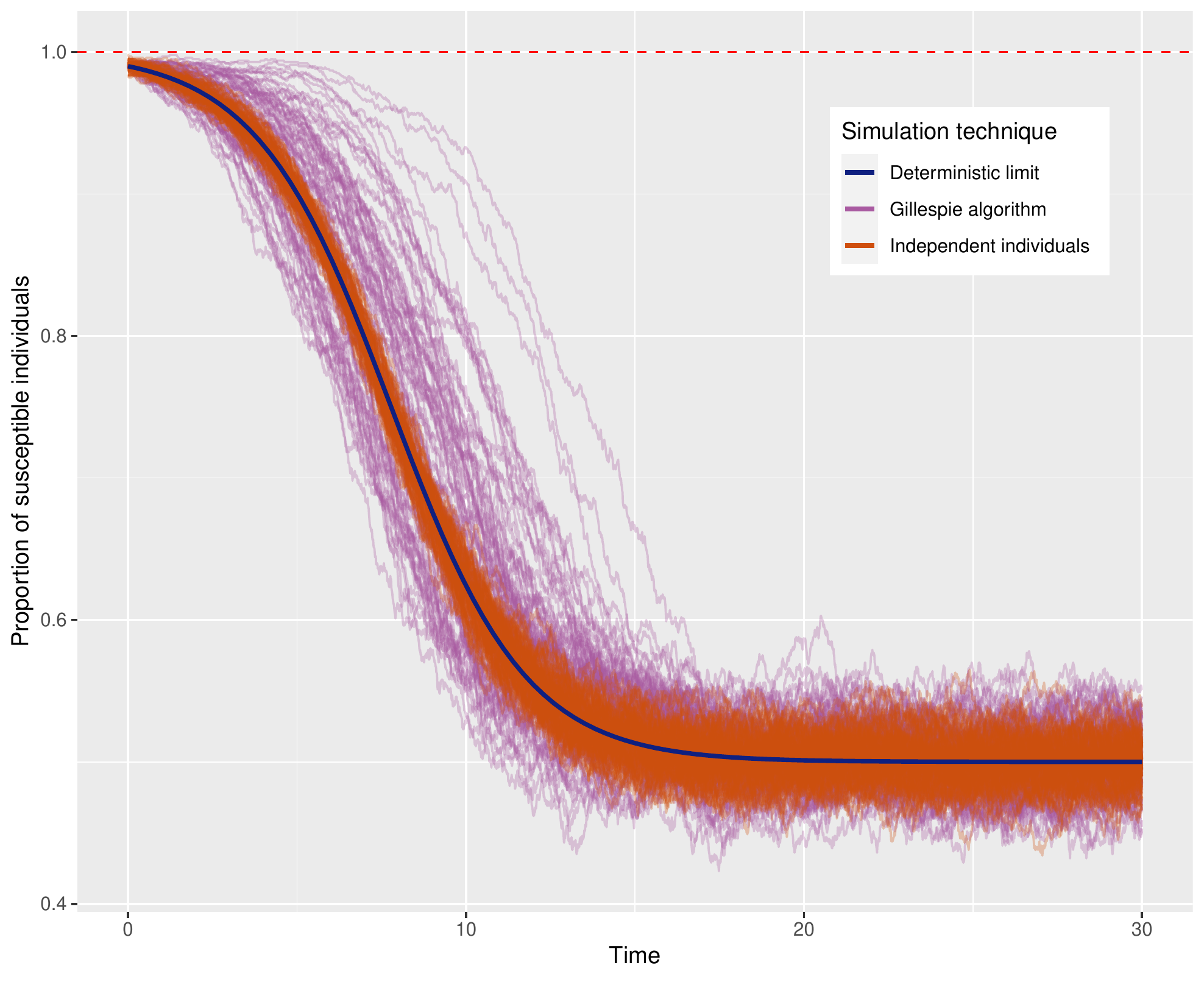}
     \caption{\textbf{Comparison in SIS model.} Comparison of 100 independent trajectories of $X_S^V/V$ and $\tr{X}_S^V/V$, considering the SIS model described in \eqref{eq:SIS}. Here, $X_S^V(0)=0.99 V$, $X_I^V(0)=0.01 V$, and $V=1,000$. Mass-action kinetics is assumed, with the rate constants of $S+I\to2I$ and $I\to S$ being 1 and $0.5$, respectively.}
     \label{fig:sis}
 \end{figure}
 Consider the SIS model of equation \eqref{eq:SIS}. We assume $X_S^V(0)=0.99 V$ and $X_I^V(0)=0.01 V$, and let $V=1,000$. We wish to approximate the number of susceptible individuals by
  \begin{equation*}
     \frac{X_S^V(t)}{V} \approx \frac{\tr{X}_S^V(t)}{V}.
 \end{equation*} 
 In order to test the performance of the above approximation, we simulate 100 independent copies of $X^V$ and $\tr{X}$, and plot them against each other in Figure~\ref{fig:sis}. It is perhaps not surprising to note a higher variance for the trajectories of $X^V$ with respect of those of $\tr{X}^V$: the former is the result of several single-molecule trajectories that are naturally correlated with each other, specifically the rate at which a single molecule changes state is stochastic and given by the current state of all the other molecules. In the approximation, the dynamics of the single tracked molecules are independent and \edit{their rates of transitions between states are completely determined by the deterministic solution $Z$}, which leads to fewer stochastic fluctuations. However, we do observe a discrepancy between the two models only at the beginning of the trajectories, when the number of infected individuals is rather low (only 10 individuals in the initial condition) and the deterministic approximation given by Theorem~\ref{thm:classical} is perhaps not yet accurate enough. As a matter of fact, Figure~\ref{fig:sis2} shows that the difference in variance is considerably reduced if the initial counts of infected individuals is increased to 100.
   \begin{figure}
     \centering
     \includegraphics[width=\textwidth]{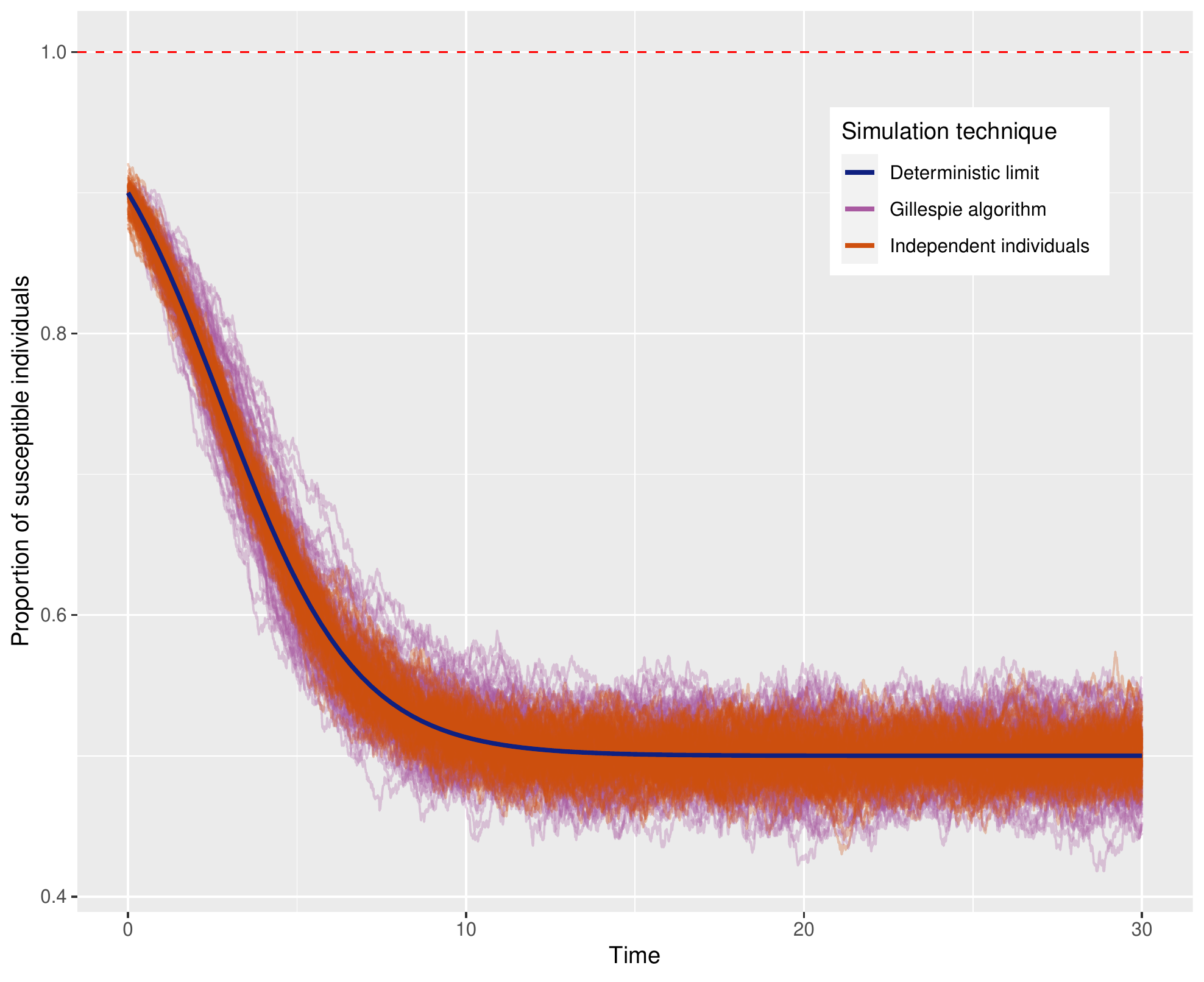}
     \caption{\textbf{Comparison in SIS model.} Comparison of 100 independent trajectories of $X_S^V/V$ and $\tr{X}_S^V/V$, considering the SIS model described in \eqref{eq:SIS}. Here, $X_S^V(0)=0.9 V$, $X_I^V(0)=0.1 V$, and $V=1,000$. Mass-action kinetics is assumed, with the rate constants of $S+I\to2I$ and $I\to S$ being 1 and $0.5$, respectively.}
     \label{fig:sis2}
 \end{figure}
 \end{example}
 We are interested in bounding
 \begin{equation}\label{eq:tobebounded}
     P\left(\sup_{0\leq t\leq T}\left|\frac{X_S^V(t)}{V}-\frac{\tr{X}_S^V(t)}{V}\right|>\varepsilon\right),
 \end{equation}
 for a fixed $\varepsilon\in\RR_{>0}$. Assume mass-action kinetics and let $\kappa_1$ and $\kappa_2$ be the rate constants of $S+I\to 2I$ and $I\to S$, respectively. Moreover, assume for simplicity that $X^V(0)=\tr{X}^V(0)=VZ(0)$ and $X_S^V(0)+X_I^V(0)=V$. Since the total number of individual is conserved, for all $0\leq t\leq T$ we have $X_S^V(t)+X_I^V(t)=V$. By superposition there exist two independent unit-rate Poisson processes $\tr{N}_{S+I\to 2I}$ and $\tr{N}_{I\to S}$ such that for all $0\leq t\leq T$ and for a fixed $V$ we have (with a simplified notation that does not take into account the initial values of the independent single individual trajectories)
 \begin{align*}
     \tr{N}_{S+I\to 2I}\left(\int_0^t \kappa_1 \tr{X}_S^V(u)Z_I(u)du\right)&=\sum_{i=1}^V N^i_{\tr{S}+S+I\to \tr{I}+2I}\left(\int_0^t\mathbbm{1}_{\{\tr{S}\}}(Y^i(u))Z_I(u)du\right)\\
     \tr{N}_{I\to S}\left(\int_0^t \kappa_2 \tr{X}_I^V(u)du\right)&=\sum_{i=1}^V N^i_{\tr{I}+I\to \tr{S}+S}\left(\int_0^t\mathbbm{1}_{\{\tr{I}\}}(Y^i(u))du\right).
 \end{align*}
 Then,
 \begin{multline*}
     \left|\frac{X_S^V(t)}{V}-\frac{\tr{X}_S^V(t)}{V}\right|\leq \Delta(t)+\frac{1}{V}\int_0^t \kappa_1 X_S^V(u)\left|\frac{X_I^V(u)}{V}-Z_I(u)\right|du\\
     +\int_0^t \kappa_1 \left|\frac{X_S^V(u)}{V}-\frac{\tr{X}_S^V(u)}{V}\right|Z_I(u)du+\int_0^t \kappa_2 \left|\frac{X_I^V(u)}{V}-\frac{\tr{X}_I^V(u)}{V}\right|du,
 \end{multline*}
 where
 \begin{align*}
     \Delta(t)=&\frac{1}{V}\left|N_{S+I\to 2I}\left(\int_0^t \frac{\kappa_1}{V} X_S^V(u)X_I^V(u)du\right)-\int_0^t \frac{\kappa_1}{V} X_S^V(u)X_I^V(u)du\right|\\
     &+\frac{1}{V}\left|N_{I\to S}\left(\int_0^t \kappa_2 X_I^V(u)du\right)-\int_0^t \kappa_2 X_I^V(u)du\right|\\
     &+\frac{1}{V}\left|\tr{N}_{S+I\to 2I}\left(\int_0^t \kappa_1 \tr{X}_S^V(u)Z_I(u)du\right)-\int_0^t \kappa_1 \tr{X}_S^V(u)Z_I(u)du\right|\\
     &+\frac{1}{V}\left|\tr{N}_{I\to S}\left(\int_0^t \kappa_2 \tr{X}^V_I(u)du\right)-\int_0^t \kappa_2 \tr{X}^V_I(u)du\right|.
 \end{align*}
 Using $X^V_I(t)=V-X^V_I(t)$ and $Z_I(t)\leq 1$ for all $0\leq t\leq T$ we obtain
 \begin{multline*}
     \left|\frac{X_S^V(t)}{V}-\frac{\tr{X}_S^V(t)}{V}\right|\leq \Delta(t)+\int_0^t\kappa_1\left|\frac{X_I^V(u)}{V}-Z_I(u)\right|du\\
     +\int_0^t(\kappa_1+\kappa_2)\left|\frac{X_S^V(u)}{V}-\frac{\tr{X}_S^V(u)}{V}\right|du.
 \end{multline*}
 By taking the supremum on $0\leq t\leq T$ on both sides and by applying the Gronwall inequality, we have
 \begin{equation*}
     \sup_{0\leq t\leq T}\left|\frac{X_S^V(t)}{V}-\frac{\tr{X}_S^V(t)}{V}\right|\leq \left(\sup_{0\leq t\leq T}\Delta(t)+\kappa_1T\sup_{0\leq t\leq T}\left|\frac{X_I^V(u)}{V}-Z_I(u)\right|\right)e^{(\kappa_1+\kappa_2)T}.
 \end{equation*}
 For notational convenience, let $\nu=\varepsilon e^{-(\kappa_1+\kappa_2)T}$. Hence, \eqref{eq:tobebounded} is smaller than
 \begin{equation}\label{eq:intermediate}
     P\left(\sup_{0\leq t\leq T}\Delta(t)>\frac{\nu}{2}\right)+P\left(\sup_{0\leq t\leq T}\left|\frac{X_I^V(u)}{V}-Z_I(u)\right|>\frac{\nu}{2\kappa_1 T}\right).
 \end{equation}
 By noting that $P(\sup_{0\leq t\leq T}\Delta(t)>\nu/2)$ is smaller than
 \begin{align*}
     &P\left(\sup_{0\leq t\leq T}\frac{1}{V}\left|N_{S+I\to 2I}\left(\int_0^t \frac{\kappa_1}{V} X_S^V(u)X_I^V(u)du\right)-\int_0^t \frac{\kappa_1}{V} X_S^V(u)X_I^V(u)du\right|>\frac{\nu}{8}\right)\\
     &\quad+P\left(\sup_{0\leq t\leq T}\frac{1}{V}\left|N_{I\to S}\left(\int_0^t \kappa_2 X_I^V(u)du\right)-\int_0^t \kappa_2 X_I^V(u)du\right|>\frac{\nu}{8}\right)\\
     &\quad+P\left(\sup_{0\leq t\leq T}\frac{1}{V}\left|\tr{N}_{S+I\to 2I}\left(\int_0^t \kappa_1 \tr{X}_S^V(u)Z_I(u)du\right)-\int_0^t \kappa_1 \tr{X}_S^V(u)Z_I(u)du\right|>\frac{\nu}{8}\right)\\
     &\quad+P\left(\sup_{0\leq t\leq T}\frac{1}{V}\left|\tr{N}_{I\to S}\left(\int_0^t \kappa_2 \tr{X}^V_I(u)du\right)-\int_0^t \kappa_2 \tr{X}^V_I(u)du\right|>\frac{\nu}{8}\right),
 \end{align*}
 we obtain that \eqref{eq:intermediate} is smaller than
 \begin{multline*}
     12\exp\left(\frac{\kappa_1 e T}{2}-\frac{\nu}{24}\sqrt{V}\right)+
     12\exp\left(\frac{\kappa_2 e T}{2}-\frac{\nu}{24}\sqrt{V}\right)\\
     +6\exp\left(\frac{\kappa_1 e T}{2}\left(1+\frac{\nu}{\kappa_1 T}\right)^2+\frac{\kappa_2 e T}{2}\left(1+\frac{\nu}{\kappa_1 T}\right)-\frac{\nu}{12\kappa_1 T}e^{-T( \kappa_1-\kappa_2)-\nu}\sqrt{V}\right)
 \end{multline*}
 by Lemma~\ref{lem:cpp_classical} and Theorem~\ref{thm:estimate_p} (for the special case of the SIS model, see Example~\ref{ex:SISestimates}). We note that $\exp(h)$ is defined as $e^h$ for all real numbers $h$. It follows that \eqref{eq:tobebounded} tends to 0 as $V$ tends to $\infty$ with the same rate as $e^{-C\sqrt{V}}$ for some positive constant $C$. This is always the case, and bounds for more general models are provided by Theorem~\ref{thm:aggrconv}.
 
\section*{Acknowledgements}
 
 DC was supported by the MIUR grant ‘Dipartimenti di Eccellenza 2018-2022’ (E11G18000350001). GAR was supported by the National Sciences Foundation grant (DMS-1853587).
 
\appendix

\section{Proofs and explicit bounds}\label{sec:proof}

In this section we give proofs for the results stated above, together with more precise bounds on the quantities of interest. To this aim, we first define the following quantities: for all $V\in\ZZ_{\geq1}$ and $\varepsilon\in\RR_{>0}$ let
\begin{equation*}
    \A_{V,\varepsilon,t}=\left\{\sup_{u\in[0,t]}\left\|\frac{X^{V}(u)}{V}-Z(u)\right\|_\infty\leq\varepsilon\right\}\quad\text{and}\quad p^{V,\varepsilon,t}=P(\A^c_{V,\varepsilon,t})=1-P(\A_{V,\varepsilon,t}),
\end{equation*}
where the superscript ``$c$'' denotes the complement. Note that, for any fixed $V$ and $\varepsilon$, the sequence of events $\A_{V,\varepsilon,t}$ is monotone in $t$, and $p^{V,\varepsilon,t}$ is a non-decreasing function of $t$ attaining its maximum for the value $t=T$.

Define the $\ZZ^d_{\geq0}$-valued process $X^{V,\varepsilon}$ on $[0,T]$ in the following way: for any $S\in\Sp$ and any $t\in[0,T]$, let
\begin{equation}\label{eq:minmax}
 X_S^{V,\varepsilon}(t)=\min\{\max\{X_S^V(t), VZ_S(t)-V\varepsilon\}, VZ_S(t)+V\varepsilon\}.
\end{equation}
Hence, by definition for all $t\in\RR_{>0}$
 \begin{equation*}
     \left\|\frac{X^{V,\varepsilon}(t)}{V}-Z(t)\right\|_\infty\leq \varepsilon.
 \end{equation*}
Moreover, define the process $\hat{X}^{V, \varepsilon}$ by
 \begin{equation*}
     \hat{X}^{V,\varepsilon}(t)=X^V(0)+\sum_{y\to y'\in\Rc}(y'-y)N_{y\to y'}\left(\int_0^t \lambda_{y\to y'}^V(X^{V,\varepsilon}(u))du\right)
 \end{equation*}
 for all $t\in[0,T]$, where the processes $N_{y\to y'}$ are the same as in \eqref{eq:X_kurtznotation}. Note that for any $u\in[0,t]$ we have $\mathbbm{1}_{\A_{V,\varepsilon,t}}X^{V,\varepsilon}(u)=\mathbbm{1}_{\A_{V,\varepsilon,t}}X^{V}(u)=\mathbbm{1}_{\A_{V,\varepsilon,t}}\hat{X}^{V,\varepsilon}(u)$. In particular, it follows that
 \begin{align}\label{eq:bound}
     \sup_{0\leq u\leq t}\left\|\frac{X^{V,\varepsilon}(u)}{V}-Z(u)\right\|_\infty&\leq
     \mathbbm{1}_{\A_{V,\varepsilon,t}}\sup_{0\leq u\leq t}\left\|\frac{\hat{X}^{V,\varepsilon}(u)}{V}-Z(u)\right\|_\infty+
     \mathbbm{1}_{\A^c_{V,\varepsilon,t}}\varepsilon\notag\\
     &\leq \sup_{0\leq u\leq t}\left\|\frac{\hat{X}^{V,\varepsilon}(u)}{V}-Z(u)\right\|_\infty.
 \end{align}
 \edit{The last inequality follows from noting that if $\A^c_{V,\varepsilon,t}$ occurs and if 
 \begin{equation*}
  u^*=\inf\left\{u\in[0,t]\,:\, \left\|\frac{X^{V}(u)}{V}-Z(u)\right\|_\infty\geq\varepsilon\right\},
 \end{equation*}
 then $X^{V,\varepsilon}(u)=X^{V}(u)=\hat{X}^{V,\varepsilon}(u)$ for all $u\in[0,u^*)$ and $\hat{X}^{V,\varepsilon}(u^*)=X^V(u^*)$. Moreover, by the right continuity of $X^{V}$ and $Z$ $u^*$ is in fact a minimum, which implies 
 \begin{equation*}
  \left\|\frac{X^{V}(u^*)}{V}-Z(u^*)\right\|_\infty\geq\varepsilon.
 \end{equation*}
 Hence
 \begin{align*}
  \mathbbm{1}_{\A^c_{V,\varepsilon,t}}\sup_{0\leq u\leq t}\left\|\frac{\hat{X}^{V,\varepsilon}(u)}{V}-Z(u)\right\|_\infty&\geq \mathbbm{1}_{\A^c_{V,\varepsilon,t}}\left\|\frac{\hat{X}^{V,\varepsilon}(u^*)}{V}-Z(u^*)\right\|_\infty\\
  &=\mathbbm{1}_{\A^c_{V,\varepsilon,t}}\left\|\frac{X^{V}(u^*)}{V}-Z(u^*)\right\|_\infty\geq \mathbbm{1}_{\A^c_{V,\varepsilon,t}}\varepsilon.
 \end{align*}
}
% if and only if 
% $$\left\|\frac{X^V(t)}{V}-Z(t)\right\|_\infty\leq \varepsilon,$$
% hence $P(X^{V,\varepsilon}(t)\neq X^{V}(t)\text{ for some }t\in[0,T])=p^{V,\varepsilon,T}$.

For any $t\in [0,T]$ and any $\varepsilon \in\RR_{>0}$ let
 \begin{equation*}
     \Omega_1^{\varepsilon,t}=\{Z(u)+h\,:\,u\in[0,t], h\in\RR^d, \|h\|_\infty\leq \varepsilon\}\cap\RR^d_{\geq0}
 \end{equation*}
be the (one-dimensional) neighbourhood of the solution $Z$ on the interval $[0,t]$ with amplitude $\varepsilon$, intersected with the non-negative orthant. Note that for all $t\in[0,T]$ we have $X^{V,\varepsilon}(t)/V\in \Omega^{\varepsilon, V}_1$. Similarly, let
 \begin{equation*}
     \Omega_2^{\varepsilon,t}=\{(Z(u)+h, Z(u)+h')\,:\,u\in[0,t], h,h'\in\RR^d, \|h\|_\infty\leq \varepsilon, \|h'\|_\infty\leq \varepsilon\}\cap\RR^{2d}_{\geq0}
 \end{equation*}
 be the two-dimensional neighbourhood of the $Z$ restricted to $[0,t]$ with amplitude $\varepsilon$, intersected with the non-negative orthant. 
 
 To conclude, it is convenient to introduce in this section a notation for centered Poisson processes: given a Poisson process $N$, we denote by $\cen{N}$ the process defined by $\cen{N}(t)=N(t)-t$
for all $t\in\RR_{\geq0}$. In order to bound $p^{V,\varepsilon,t}$ from above and prove Theorem~\ref{thm:aggregate} we need the following results concerning centered Poisson processes. For completeness, we provide a proof as we were not able to find it in the literature, even if small variations of Lemma~\ref{lem:cpp_classical} are well-known and obtained as an application of Doob's inequality or Kolmogorov's maximal inequality. 

\begin{lemma}\label{lem:cpp_classical}
 Let $N$ be a Poisson process and let $T,\varepsilon\in\RR_{>0}$. Then, for all $n \in \ZZ_{\geq1}$
 \begin{equation*}
     P\left(\sup_{t\in[0,nT]}\left|\frac{\cen{N}(t)}{n}\right|>\varepsilon\right)\leq 6 \exp\left(\frac{e}{2}T-\frac{\varepsilon \sqrt{n}}{3}\right).
 \end{equation*}
\end{lemma}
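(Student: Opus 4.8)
The plan is to use the exponential (Cram\'er) martingale of the Poisson process together with Doob's maximal inequality, handling the two one-sided deviations separately. Write $\cen{N}(t)=N(t)-t$, and recall that for every $\theta\in\RR$ the process
\[
M_\theta(t)=\exp\bigl(\theta N(t)-t(e^\theta-1)\bigr)
\]
is a non-negative c\`adl\`ag martingale with $E[M_\theta(t)]=1$; in particular it is a non-negative submartingale, so Doob's maximal inequality gives $P\bigl(\sup_{t\le nT}M_\theta(t)\ge a\bigr)\le E[M_\theta(nT)]/a=1/a$ for every $a>0$.

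For the upper tail I fix $\theta>0$ and rewrite the exponent as $\theta N(t)-t(e^\theta-1)=\theta\,\cen{N}(t)-t(e^\theta-1-\theta)$. Since $e^\theta-1-\theta\ge 0$ and $t\le nT$, on the (fixed-time) event $\{\cen{N}(t)>n\varepsilon\}$ one obtains the deterministic bound $M_\theta(t)\ge\exp\bigl(\theta n\varepsilon-nT(e^\theta-1-\theta)\bigr)$. Because $\cen{N}$ is c\`adl\`ag, the event $\{\sup_{t\le nT}\cen{N}(t)>n\varepsilon\}$ forces $M_\theta$ to reach this same level at some time, so Doob's inequality yields
\[
P\Bigl(\sup_{t\le nT}\cen{N}(t)>n\varepsilon\Bigr)\le\exp\bigl(-\theta n\varepsilon+nT(e^\theta-1-\theta)\bigr).
\]
Choosing $\theta=1/\sqrt n$ and using the elementary estimate $e^\theta-1-\theta\le\tfrac{\theta^2}{2}e^\theta\le\tfrac{e}{2n}$ (valid since $\theta\le1$ for $n\ge1$) collapses the exponent to $\tfrac{e}{2}T-\varepsilon\sqrt n$.

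The lower tail is symmetric: with $\theta=-1/\sqrt n$, setting $\phi=1/\sqrt n$ and writing the exponent as $\phi\bigl(t-N(t)\bigr)-t(\phi+e^{-\phi}-1)$, the analogous estimate $\phi+e^{-\phi}-1\le\tfrac{\phi^2}{2}$ bounds $P\bigl(\sup_{t\le nT}(-\cen{N}(t))>n\varepsilon\bigr)$ by $\exp\bigl(\tfrac{T}{2}-\varepsilon\sqrt n\bigr)\le\exp\bigl(\tfrac{e}{2}T-\varepsilon\sqrt n\bigr)$. Summing the two one-sided bounds controls $P\bigl(\sup_{t\le nT}|\cen{N}(t)|>n\varepsilon\bigr)$ by $2\exp\bigl(\tfrac{e}{2}T-\varepsilon\sqrt n\bigr)$, which is a fortiori smaller than the claimed $6\exp\bigl(\tfrac{e}{2}T-\tfrac{1}{3}\varepsilon\sqrt n\bigr)$, since $2e^{-\varepsilon\sqrt n}\le6e^{-\varepsilon\sqrt n/3}$ for all $n$.

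The analytic estimates here are routine once the martingale is in place; the one point that requires care is the transfer from the fixed-time inclusion $\{\cen{N}(t)>n\varepsilon\}\subseteq\{M_\theta(t)\ge a\}$ to the running supremum, namely checking that a c\`adl\`ag path which attains a large value of $\cen{N}$ at some random time makes $\sup_{t}M_\theta(t)$ large \emph{at that very time}, so that Doob's submartingale inequality applies. I expect this, rather than any computation, to be the main (and only mild) obstacle. Finally, note that fixing the scale $\theta=\pm1/\sqrt n$ is a deliberate, suboptimal choice: the minimizing $\theta$ would give the sharper $O(e^{-cn})$ Poisson large-deviation rate, but with constants depending on $T$ and $\varepsilon$, whereas $\theta=\pm1/\sqrt n$ is exactly what produces the clean universal constants sought here.
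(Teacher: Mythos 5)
Your proof is correct, and it takes a genuinely different route from the paper's. The paper discretizes: it reduces the running supremum to a maximum over dyadic grids $\Xi_j^{nT}$ by right-continuity, applies Etemadi's inequality to pass from the maximum to a fixed-time tail (paying a factor $3$ in the constant and replacing $\varepsilon$ by $\varepsilon/3$ in the exponent), and then bounds each fixed-time tail by Markov's inequality with the Poisson moment generating function at scale $n^{-1/2}$. You instead exploit the exponential martingale $M_\theta$ and Doob's maximal inequality, which handles the running supremum in one stroke; the same choice $\theta=\pm n^{-1/2}$ and the same Taylor estimate $e^\theta-1-\theta\le \tfrac{\theta^2}{2}e^\theta$ then appear, but without the Etemadi loss, so you land on the strictly sharper bound $2\exp\left(\tfrac{e}{2}T-\varepsilon\sqrt{n}\right)$, which dominates the stated $6\exp\left(\tfrac{e}{2}T-\tfrac{\varepsilon\sqrt{n}}{3}\right)$. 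The one delicate point you flag — transferring from the fixed-time inclusion to the supremum — is unproblematic: the event $\{\sup_{t\le nT}|\cen{N}(t)|>n\varepsilon\}$ (strict inequality) guarantees some $t$ with $|\cen{N}(t)|>n\varepsilon$, whether or not the supremum is attained, and at that $t$ the martingale exceeds the deterministic threshold, so Doob's inequality for right-continuous non-negative submartingales applies. The authors themselves remark that variants of this lemma follow from Doob's or Kolmogorov's maximal inequality; yours is precisely that argument carried out, while theirs trades sharpness for an entirely elementary tool (Etemadi needs no martingale structure, only fixed-time tail bounds).
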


\begin{proof}
 For all $j\in\ZZ_{\geq1}$ and all $h\in\RR_{>0}$ define
 \begin{equation}\label{eq:defxi}
     \Xi^{h}_j=\bigcup_{i=0}^{2^j h}\left\{\frac{i}{2^j}\right\}.
 \end{equation}
 Since $\cen{N}$ is almost surely right continuous, we have that for all $n\in\ZZ_{\geq1}$ and all $T\in\RR_{>0}$
 \begin{equation*}
     \sup_{t\in[0,nT]}\left|\frac{\cen{N}(t)}{n}\right|=\lim_{j\to\infty}\max_{t\in\Xi^{nT}_j}\left|\frac{\cen{N}(t)}{n}\right|
 \end{equation*}
 almost surely. Since for all $j\in\ZZ_{\geq1}$ we have $\Xi^{nT}_j\subset \Xi^{nT}_{j+1}$, by continuity of the probability measure we have
 \begin{equation*}
     P\left(\sup_{t\in[0,nT]}\left|\frac{\cen{N}(t)}{n}\right|>\varepsilon\right)= \lim_{j\to\infty} P\left(\max_{t\in \Xi^{nT}_j}\left|\frac{\cen{N}(t)}{n}\right|>\varepsilon\right).
 \end{equation*}
 By Etemadi's inequality we have
 \begin{equation*}
     P\left(\max_{t\in \Xi^{nT}_j}\left|\frac{\cen{N}(t)}{n}\right|>\varepsilon\right)\leq 3 \max_{t\in \Xi^{nT}_j} P\left(\left|\frac{\cen{N}(t)}{n}\right|>\frac{\varepsilon}{3}\right).
 \end{equation*}
 Moreover, for any real $\beta\in(0,1)$ and any real $t\in(0, nT)$ we have
 \begin{align*}
     P\left(\left|\frac{\cen{N}(t)}{n}\right|>\frac{\varepsilon}{3}\right)&\leq P\left(\frac{\cen{N}(t)}{n}>\frac{\varepsilon}{3}\right)+P\left(-\frac{\cen{N}(t)}{n}>\frac{\varepsilon}{3}\right)\\
     &=P\left(e^{\frac{n^\beta\cen{N}(t)}{n}}>e^{\frac{n^\beta\varepsilon}{3}}\right)+P\left(e^{-\frac{n^\beta\cen{N}(t)}{n}}>e^{\frac{n^\beta\varepsilon}{3}}\right)\\
     &\leq 2\exp\left(-\frac{n^\beta\varepsilon}{3}\right) \exp\left(t(e^{n^{\beta-1}}-1-n^{\beta-1})\right)\\
     &\leq 2\exp\left(-\frac{n^\beta\varepsilon}{3}\right) \exp\left(nT\frac{n^{2\beta-2}}{2}e^{n^{\beta -1}}\right),\\
     &\leq 2\exp\left(-\frac{n^\beta\varepsilon}{3}\right) \exp\left(nT\frac{n^{2\beta-2}}{2}e\right),
 \end{align*}
 where the inequality in the third line follows from the Markov's inequality and the known form of the moment generating function of a Poisson random variable, which leads to $E[e^{n^{\beta-1}\cen{N}(t)}]=e^{-n^{\beta-1} t}e^{t(e^{n^{\beta-1}}-1)}$ and $E[e^{-n^{\beta-1}\cen{N}(t)}]=e^{n^{\beta-1} t}e^{t(e^{-n^{\beta-1}}-1)}$. Hence, for all $n\in\ZZ_{\geq1}$ we have that both $E[e^{n^{\beta-1}\cen{N}(t)}]$ and $E[e^{-n^{\beta-1}\cen{N}(t)}]$ are less than or equal to $e^{t(e^{n^{\beta-1}}-1-n^{\beta-1})}$. The inequality in the forth line derives from the Taylor expansion of the exponential function. By choosing $\beta=1/2$ we have
 \begin{equation*}
     P\left(\left|\frac{\cen{N}(t)}{n}\right|>\frac{\varepsilon}{3}\right)\leq 2\exp\left(-\frac{\varepsilon\sqrt{n}}{3}\right) \exp\left(\frac{e}{2}T\right),
 \end{equation*}
 which completes the proof.
\end{proof}

% \begin{lemma}\label{lem:cpp}
%  Let $\{N_i\}_{i\in\ZZ_{\geq0}}$ be a set of independent Poisson processes defined on the same probability space. Moreover, let $T\in\RR_{>0}$. Then, for all $n \in \ZZ_{\geq1}$
%  $$P\left(\sup_{u\in[0,1]^{n}}\left|\sum_{i=0}^{n} \frac{\cen{N}_i(u_iT)}{n}\right|>\varepsilon\right)\leq 6n^2 e^{\frac{T}{2}-\frac{\varepsilon \sqrt{n}}{3}}.$$
% \end{lemma}
% \begin{proof}
%  For all $j\in\ZZ_{\geq1}$ and all $h\in\RR_{>0}$ define $\Xi^{h}_j$ as in \eqref{eq:defxi}. By using the continuity of the probability measure, Etemadi's inequality and Markov's inequality as in the proof of Lemma~\ref{lem:cpp_classical}, we obtain that for all $i\in\ZZ_{\geq0}$ and all $\beta\in\RR_{>0}$
%  \begin{align*}
%      P\left(\sup_{t\in[0,T]}\left|\frac{\cen{N}_i(t)}{n}\right|>\varepsilon\right)&=\lim_{j\to\infty}P\left(\max_{t\in\Xi^T_j}\left|\frac{\cen{N}_i(t)}{n}\right|>\varepsilon\right)\\
%      &\leq 3\lim_{j\to\infty}\max_{t\in\Xi^T_j}P\left(\left|\frac{\cen{N}_i(t)}{n}\right|>\frac{\varepsilon}{3}\right)\\
%      &\leq 6\lim_{j\to\infty}\max_{t\in\Xi^T_j}e^{-\frac{n^\beta\varepsilon}{3}} e^{t(e^{n^{\beta-1}}-1-n^{\beta-1})}\\
%      &\leq 6e^{-\frac{n^\beta\varepsilon}{3}}e^{T\frac{n^{2\beta-2}}{2}}
%  \end{align*}
%  By choosing $\beta=1$ we get
%  \begin{equation*}
%      P\left(\sup_{t\in[0,T]}\left|\frac{\cen{N}_i(t)}{n}\right|>\varepsilon\right)\leq 6e^{-\frac{n\varepsilon}{3}}e^{\frac{T}{2}}.
%  \end{equation*}
% \end{proof}
 \subsection{Estimates for $p^{V,\varepsilon,t}$}\label{sec:estimates_p}
  Many papers have focused on quantifying the distance between the process $X^V$ and its fluid limit $Z$. Among these, we list \cite{agazzi2018large, agazzi2022large, kurtz1976limit, kang2014central, anderson2020tier, prodhomme2020strong} with no claim of completeness. Here we use Lemma~\ref{lem:cpp_classical} to show the following upper bound on $p^{V,\varepsilon,t}$. While similar estimates are known in the reaction network community, we give a formal proof of the bound we propose as we could not find it in the literature. Before stating the result, we define the following quantities:
  \begin{align*}
      R&=\max_{y\to y'\in\Rc}\|y'-y\|_\infty,\\
      \Lambda^{\varepsilon,t}_0&=\sup_{z\in\Omega^{\varepsilon,t}_1}\sum_{y\to y'\in\Rc}\lambda_{y\to y'}(z),\quad \Lambda^{\varepsilon,t}_1=\int_0^t \Lambda^{\varepsilon,u}_0 du\\
      L^{\varepsilon,t}_0&=\sup_{\substack{(z,z')\in\Omega^{\varepsilon,t}_2\\ z\neq z'}}\sum_{y\to y'\in\Rc}\frac{|\lambda_{y\to y'}(z)-\lambda_{y\to y'}(z')|}{\|z-z'\|_\infty},\quad L^{\varepsilon,t}_1=\int_0^tL^{\varepsilon,u}_0 du\\
      \delta^{V,\varepsilon, t}_0&=\sup_{z\in\Omega^{\varepsilon, t}_1}\sum_{y\to y'\in\Rc}\left|\frac{\lambda^V_{y\to y'}(\floor{Vz})}{V}-\lambda_{y\to y'}(z)\right|,\quad\delta^{V,\varepsilon, t}_1=\int_0^t \delta^{V,\varepsilon, u}_0du\\
      \eta^{V,\varepsilon,t}(\gamma)&=e^{-L^{2\varepsilon, t}_1}\gamma\varepsilon-\delta^{V,2\varepsilon, t}_1,
  \end{align*}
  where in the last definition $\gamma$ is any real number in $(0,1]$. Note that $\Lambda^{\varepsilon,t}_0$ and $\delta^{V,\varepsilon,t}_0$ are finite for any $t\in[0,T]$, since the solution $Z$ exists up to time $T$ and the functions $\lambda_{y\to y'}$ are locally Lipschitz by Assumption~\ref{ass:classical_limit}. The local Lipschitzianity of the functions $\lambda_{y\to y'}$ also implies that $L^{\varepsilon, t}_0$ is finite for all $\varepsilon\in\RR_{>0}$ and $t\in[0,T]$. It also follows from Assumption~\ref{ass:classical_limit} that $\delta^{V,\varepsilon,t}_0$ tends to zero as $V$ tends to infinity. Furthermore, note that for fixed $V\in\ZZ_{\geq1}$ and $\varepsilon\in\RR_{>0}$, the quantities $\Lambda^{\varepsilon,t}_0, L^{\varepsilon,t}_0$, and $\delta^{V,\varepsilon,t}_0$ are all non-decreasing functions of $t$. As a consequence, for all $t\in[0,T], \varepsilon\in\RR_{>0}$, and $V\in\ZZ_{\geq1}$ we have
  \begin{equation*}
      \Lambda^{\varepsilon,t}_1\leq t\Lambda^{\varepsilon,t}_0,\quad
      L^{\varepsilon,t}_1\leq tL^{\varepsilon,t}_0,\quad\text{and}\quad
      \delta^{V,\varepsilon,t}_1\leq t\delta^{V,\varepsilon,t}_0.
  \end{equation*}
  It follows that for all $t\in[0,T], \varepsilon\in\RR_{>0}$, and $\gamma\in (0,1]$ the quantity $\eta^{V,\varepsilon,t}(\gamma)$ tends to the positive quantity $e^{-L^{2\varepsilon, t}_1}\gamma\varepsilon$ as $V$ tends to infinity. We can now state the following theorem.
  \begin{theorem}\label{thm:estimate_p}
   For any $\varepsilon,t\in\RR_{>0}$, any $\gamma\in(0,1]$, and any $V\in\ZZ_{\geq1}$ large enough such that $\eta^{V,2\varepsilon, t}(\gamma)>0$, we have
   \begin{equation*}
       p^{V,\varepsilon,t}\leq p^{V, (1-\gamma)\varepsilon e^{-L_1^{2\varepsilon,t}},0}+ 6\exp\left(\frac{e}{2}\Lambda^{2\varepsilon,t}_1+\frac{e}{2}\delta_1^{V,2\varepsilon,t}-\frac{1}{3R}\eta^{V,\varepsilon,t}(\gamma)\sqrt{V}\right)
   \end{equation*}
  \end{theorem}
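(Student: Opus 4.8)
The plan is to control the exit probability $p^{V,\varepsilon,t}=P(\A^c_{V,\varepsilon,t})$ by comparing $X^V$ with the auxiliary ``frozen-rate'' process $\hat X^{V,2\varepsilon}$ driven by the clipped trajectory $X^{V,2\varepsilon}$, and then to split the resulting deviation into an initial-condition error, a centered-Poisson fluctuation, and a Lipschitz drift that is absorbed by a Gronwall argument.

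First I would reduce everything to $\hat X^{V,2\varepsilon}$. Working in the $2\varepsilon$-tube leaves room to absorb the $O(R/V)$ overshoot produced by a single jump and the floor discretisation in the rates. At the first time $u^\ast$ at which $X^V/V$ leaves the $\varepsilon$-tube, the clipped process still coincides with $X^V$ on $[0,u^\ast]$, so $\hat X^{V,2\varepsilon}(u^\ast)=X^V(u^\ast)$ and $\|\hat X^{V,2\varepsilon}(u^\ast)/V-Z(u^\ast)\|_\infty\ge\varepsilon$; this is precisely the mechanism recorded in the estimate \eqref{eq:bound}, applied with $2\varepsilon$ in place of $\varepsilon$. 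Hence $p^{V,\varepsilon,t}\le P(\sup_{u\le t}\|\hat X^{V,2\varepsilon}(u)/V-Z(u)\|_\infty\ge\varepsilon)$, and the awkward randomness of $X^V$ is replaced by that of $\hat X^{V,2\varepsilon}$, whose rates are always evaluated at points of $\Omega_1^{2\varepsilon,s}$.

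Next I would expand $\hat X^{V,2\varepsilon}/V-Z$ through the Poisson representation as the sum of the initial error $X^V(0)/V-z^\ast$, a centered noise term $\sum_{y\to y'}\tfrac{y'-y}{V}\cen{N}_{y\to y'}(\int_0^u\lambda^V_{y\to y'}(X^{V,2\varepsilon}(s))\,ds)$, and a drift term $\sum_{y\to y'}(y'-y)\int_0^u[\lambda^V_{y\to y'}(X^{V,2\varepsilon}(s))/V-\lambda_{y\to y'}(Z(s))]\,ds$. Splitting the drift integrand as a rate-discrepancy part plus a Lipschitz part, and using $X^{V,2\varepsilon}(s)/V\in\Omega_1^{2\varepsilon,s}$, bounds the drift by $\delta^{V,2\varepsilon,t}_1+\int_0^u L^{2\varepsilon,s}_0\|X^{V,2\varepsilon}(s)/V-Z(s)\|_\infty\,ds$. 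The one genuinely delicate point — which I expect to be the main obstacle — is that this integral features the clipped process $X^{V,2\varepsilon}$ rather than the process $\hat X^{V,2\varepsilon}$ being estimated. I would resolve it by invoking \eqref{eq:bound} a second time to dominate $\|X^{V,2\varepsilon}(s)/V-Z(s)\|_\infty$ by $G(s):=\sup_{r\le s}\|\hat X^{V,2\varepsilon}(r)/V-Z(r)\|_\infty$, which turns the estimate into a genuine Gronwall inequality and yields $G(t)\le[\,\|X^V(0)/V-z^\ast\|_\infty+\sup_{s\le t}\|\mathrm{noise}(s)\|_\infty+\delta^{V,2\varepsilon,t}_1\,]\,e^{L^{2\varepsilon,t}_1}$.

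Finally I would convert $\{G(t)\ge\varepsilon\}$ into a union bound. Since the last display forces $\|X^V(0)/V-z^\ast\|_\infty+\sup_{s\le t}\|\mathrm{noise}(s)\|_\infty+\delta^{V,2\varepsilon,t}_1\ge\varepsilon e^{-L^{2\varepsilon,t}_1}$, the parameter $\gamma$ splits this budget into the initial-condition event $\{\|X^V(0)/V-z^\ast\|_\infty\ge(1-\gamma)\varepsilon e^{-L^{2\varepsilon,t}_1}\}$, whose probability is exactly $p^{V,(1-\gamma)\varepsilon e^{-L^{2\varepsilon,t}_1},0}$, and the fluctuation event $\{\sup_{s\le t}\|\mathrm{noise}(s)\|_\infty\ge\eta^{V,\varepsilon,t}(\gamma)\}$. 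For the latter I would bound the maximal argument of each $\cen{N}_{y\to y'}$ by $V(\Lambda^{2\varepsilon,t}_1+\delta^{V,2\varepsilon,t}_1)$, using $\lambda^V_{y\to y'}(X^{V,2\varepsilon}(s))/V\le\Lambda^{2\varepsilon,s}_0+\delta^{V,2\varepsilon,s}_0$ and integrating, and use $\|y'-y\|_\infty\le R$ to rescale the threshold to $\eta^{V,\varepsilon,t}(\gamma)/R$. Lemma~\ref{lem:cpp_classical} with $n=V$ and time horizon $\Lambda^{2\varepsilon,t}_1+\delta^{V,2\varepsilon,t}_1$ then produces the term $6\exp(\tfrac e2\Lambda^{2\varepsilon,t}_1+\tfrac e2\delta^{V,2\varepsilon,t}_1-\tfrac{1}{3R}\eta^{V,\varepsilon,t}(\gamma)\sqrt V)$, completing the bound; the standing hypothesis $\eta^{V,2\varepsilon,t}(\gamma)>0$ (hence $V$ large) is what keeps the fluctuation threshold positive and the concentration estimate non-vacuous.
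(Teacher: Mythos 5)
Your proposal is correct and follows essentially the same route as the paper's proof: the reduction to the clipped process $\hat X^{V,2\varepsilon}$ via \eqref{eq:bound}, the decomposition into initial error, centered-Poisson noise (aggregated by superposition and bounded on $[0,V(\Lambda^{2\varepsilon,t}_1+\delta^{V,2\varepsilon,t}_1)]$), and a Lipschitz drift closed by Gronwall, followed by the $\gamma$-split and Lemma~\ref{lem:cpp_classical}. You also correctly identified and resolved the one delicate point (replacing $X^{V,2\varepsilon}$ by $\hat X^{V,2\varepsilon}$ inside the Gronwall integrand via a second use of \eqref{eq:bound}), exactly as the paper does.
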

  \begin{proof}
  First, note that
  \begin{align*}
      p^{V,\varepsilon,t}&=P\left(\sup_{u\in[0,t]}\left\|\frac{X^{V}(u)}{V}-Z(u)\right\|_\infty > \varepsilon\right)=P\left(\sup_{u\in[0,t]}\left\|\frac{X^{V,2\varepsilon}(u)}{V}-Z(u)\right\|_\infty>\varepsilon\right)\\
      &=P\left(\sup_{u\in[0,t]}\left\|\frac{\hat{X}^{V,2\varepsilon}(u)}{V}-Z(u)\right\|_\infty>\varepsilon\right).
  \end{align*}
  Moreover, by superposition, for all $V\in\ZZ_{\geq1}$ and all $\varepsilon\in\RR_{>0}$ we can define a unit-rate Poisson process $U^{V, 2\varepsilon}$ coupled with $X^V$ in such a way that for all $t\in\RR_{\geq0}$
  \begin{equation*}
      U^{V,2\varepsilon}\left(\sum_{y\to y'\in\Rc} \int_0^t\lambda^V_{y\to y'}(X^{V,2\varepsilon}(u))du\right)=\sum_{y\to y'\in\Rc} N_{y\to y'}\left(\int_0^t\lambda^V_{y\to y'}(X^{V,2\varepsilon}(u))du\right).
  \end{equation*}
   Hence, by using \eqref{eq:drn} we have
   \begin{align*}
       \left\|\frac{\hat{X}^{V,2\varepsilon}(u)}{V}-Z(u)\right\|_\infty\leq& \left\|\frac{\hat{X}^{V,2\varepsilon}(0)}{V}-Z(0)\right\|_\infty+\frac{R}{V}\left|\sum_{y\to y'\in\Rc} \cen{N}_{y\to y'}\left(\int_0^u\lambda^V_{y\to y'}(X^{V,2\varepsilon}(w))dw\right)\right|\\
       &+\int_0^u\left| \sum_{y\to y'\in\Rc}\left(\frac{\lambda^V_{y\to y'}(X^{V,2\varepsilon}(w))}{V}-\lambda_{y\to y'}\left(\frac{X^{V,2\varepsilon}(w)}{V}\right)\right)dw\right|\\
       &+\int_0^u\left| \sum_{y\to y'}\left(\lambda_{y\to y'}\left(\frac{X^{V,2\varepsilon}(w)}{V}\right)-\lambda_{y\to y'}(Z(w))\right)dw\right|\\
       \leq& \left\|\frac{X^V(0)}{V}-Z(0)\right\|_\infty+\frac{R}{V}\left|\cen{U}^{V,2\varepsilon}\left(\sum_{y\to y'\in\Rc}\int_0^u\lambda^V_{y\to y'}(X^{V,2\varepsilon}(w))dw\right)\right|\\
       &+\delta_1^{V,2\varepsilon,u}+\int_0^u L^{2\varepsilon, w}_0\left\|\frac{X^{V,2\varepsilon}(w)}{V}-Z(w)\right\|_\infty dw
   \end{align*}
  By using \eqref{eq:bound}, by taking the supremum over $[0,t]$ on both sides we obtain
  \begin{align*}
      \sup_{0\leq u\leq t}\left\|\frac{\hat{X}^{V,2\varepsilon}(u)}{V}-Z(u)\right\|_\infty\leq& 
      \left\|\frac{X^V(0)}{V}-Z(0)\right\|_\infty\\
      &\hspace{-15pt}+\frac{R}{V}\sup_{0\leq u\leq t}\left|\cen{U}^{V,2\varepsilon}\left(\sum_{y\to y'\in\Rc}\int_0^u\lambda^V_{y\to y'}(X^{V,2\varepsilon}(w))dw\right)\right|\\
       &\hspace{-15pt}+\delta^{V,2\varepsilon,t}_1+\int_0^t L^{2\varepsilon, u}_0\sup_{0\leq w\leq u}\left\|\frac{\hat{X}^{V,2\varepsilon}(w)}{V}-Z(w)\right\|_\infty du.
  \end{align*}
  By Gronwall's inequality we get
  \begin{align*}
      \sup_{0\leq u\leq t}\left\|\frac{\hat{X}^{V,2\varepsilon}(t)}{V}-Z(t)\right\|_\infty\leq& e^{L^{2\varepsilon, t}_1}\left\|\frac{X^V(0)}{V}-Z(0)\right\|_\infty\\
      &+\frac{Re^{L^{2\varepsilon, t}_1}}{V}\sup_{0\leq u\leq t}\left|\cen{U}^{V,2\varepsilon}\left(\sum_{y\to y'\in\Rc}\int_0^u\lambda^V_{y\to y'}(X^{V,2\varepsilon}(w))dw\right)\right|\\
      &+e^{L^{2\varepsilon, t}_1}\delta^{V,2\varepsilon, t}_1.
  \end{align*}
  By noting that for all $t\in\RR_{\geq0}$
  \begin{equation*}
      \sup_{z\in\Omega^{2\varepsilon,t}_1}\sum_{y\to y'\in\Rc}\frac{\lambda^V_{y\to y'}(\floor{Vz})}{V}\leq\Lambda_0^{2\varepsilon,t}+\delta_0^{V,2\varepsilon,t},
  \end{equation*}
  we get 
  \begin{align*}
   p^{V,\varepsilon,t}\leq& P\left(e^{L^{2\varepsilon, t}_1}\left\|\frac{X^V(0)}{V}-Z(0)\right\|_\infty>(1-\gamma)\varepsilon\right)\\
   &+P\left(Re^{L^{2\varepsilon, t}_1}\sup_{0\leq u\leq V(\Lambda_1^{2\varepsilon,t}+\delta_1^{V,2\varepsilon,t})}\left|\frac{\cen{U}^{V,2\varepsilon}(u)}{V}\right|+e^{L^{2\varepsilon, t}_1}\delta^{V,2\varepsilon, t}_1>\gamma\varepsilon\right)
  \end{align*}
  for any $\gamma$ in $(0,1]$. The proof is concluded by Lemma~\ref{lem:cpp_classical}.
  \end{proof}
  
  \begin{example}\label{ex:SISestimates}
   Consider the SIS reaction network described in \eqref{eq:SIS}. In this case, in accordance with the classical mass-action choice of kinetics we have
   \begin{equation*}
       \lambda^V_{S+I\to 2I}(x)=\frac{1}{V}\kappa_1x_Sx_I\quad\text{and}\quad\lambda^V_{I\to S}(x)=\kappa_2x_I
   \end{equation*}
   for some positive constants $\kappa_1$ and $\kappa_2$. Hence, Assumption~\ref{ass:classical_limit} is satisfied with
   \begin{equation*}
       \lambda_{S+I\to 2I}(z)=\kappa_1z_Sz_I\quad\text{and}\quad\lambda_{I\to S}(z)=\kappa_2z_I.
   \end{equation*}
   The corresponding solution $Z$ exists for all non-negative times $t$, for all initial conditions $Z(0)=z^*$. Moreover, note that the sum of infected and susceptible individuals is kept constant, hence for all $t\in\RR_{>0}$ we have $Z_S(t)+Z_I(t)=z^*_S+z^*_I=\|z^*\|_1$. In this case we can obtain the following rough estimates
   \begin{align*}
      R&=2,\quad\Lambda^{\varepsilon,t}_0\leq (\|z^*\|_1+\varepsilon)[\kappa_1(\|z^*\|_1+\varepsilon)+\kappa_2],\quad
      L^{\varepsilon,t}_0\leq\kappa_1(\|z^*\|_1+\varepsilon)+\kappa_2,\\
      \delta^{V,\varepsilon, t}_0&=0,\quad \eta^{V,\varepsilon,t}\geq \varepsilon e^{-t \kappa_1(\|z^*\|_1+2\varepsilon)+t\kappa_2}.
  \end{align*}
   If we assume $X^V(0)=Vz^*$, then $p^{V,0,0}=0$. It follows from Theorem~\ref{thm:estimate_p} with the choice $\gamma=1$ that in this case
  \begin{equation*}
       p^{V,\varepsilon,t}\leq 6\exp\left(\frac{t}{2}(\|z^*\|_1+2\varepsilon)[\kappa_1(\|z^*\|_1+2\varepsilon)+\kappa_2]-\frac{\varepsilon\sqrt{V}}{6}e^{-t[ \kappa_1(\|z^*\|_1+2\varepsilon)-\kappa_2]}\right),
   \end{equation*}
   where $\exp(h)$ is defined as $e^h$ for all real numbers $h$.
  \end{example}
 \subsection{Proof of Theorem~\ref{cor:fddconv}}

 First of all, we define some quantities that are useful to give specific bounds on our approximation error.  Define
 \begin{align*}
 %K&=\max_{\trackS\in\track}|\{y\to y'\in\Rc\,:\,\sigma(\trackS)\in\supp(y)\}|\leq\# \Rc\\
 \tr{\Lambda}^t_0&=\max_{\trackS\in\track}\sum_{\trackS+y\to \trackS'+y'\in\trackRc}\lambda_{\trackS+y\to \trackS'+y'}(\trackS,Z(t)),\\
 \tr{L}^{\varepsilon,t}_0&=\sup_{\substack{(z,z')\in\Omega^{\varepsilon,t}_2\\ z\neq z'}}\max_{\trackS\in\track}\sum_{\trackS+y\to \trackS'+y'\in\trackRc}\frac{|\lambda_{\trackS+y\to \trackS'+y'}(\trackS,z)-\lambda_{\trackS+y\to \trackS'+y'}(\trackS,z')|}{\|z-z'\|_\infty} \\
 \tr{\delta}^{\edit{V},\varepsilon,t}_0&=\sup_{z\in\Omega^{\varepsilon, t}_1}\max_{\trackS\in\track} \sum_{\trackS+y\to \trackS'+y'\in\trackRc}|\lambda^V_{\trackS+y\to \trackS'+y'}(\trackS,\floor{Vz})-\lambda_{\trackS+y\to \trackS'+y'}(\trackS,z)|\\
 \tr{\Lambda}^t_1&=\int_0^t\tr{\Lambda}^u_0 du,\quad\tr{L}^{\varepsilon,t}_1=\int_0^t L^{\varepsilon, u}_0du,\quad\tr{\delta}^{\edit{V},\varepsilon,t}_1=\int_0^t \tr{\delta}_0^{V,\varepsilon, u}du.
 \end{align*}
Note that $\tr{\Lambda}^t_0$ is finite for any $t\in[0,T]$, due to the fact that $Z$ is defined over the whole interval $[0,T]$. Moreover the functions $\lambda_{\trackS+y\to \trackS'+y'}$ are locally Lipschitz on $\RR^d_{>0}$ by Lemma~\ref{lem:convergence_hatlambda}, hence $\tr{L}^{\varepsilon,t}_0$ is finite for all $t\in[0,T]$. Finally, $\tr{\delta}^{V,\varepsilon,t}_0$ is finite for all $t\in[0,T]$ by Lemma~\ref{lem:convergence_hatlambda}. Note that, for fixed $V$ and $\varepsilon$, the quantities $\tr{L}^{\varepsilon,t}_0$ and $\tr{\delta}^{\edit{V},\varepsilon,t}_0$ are non-decreasing functions of $t$. As a consequence, for all $t\in[0,T], \varepsilon\in\RR_{>0}$, and $V\in\ZZ_{\geq1}$ we have
\begin{equation}\label{eq:bounds}
    \tr{\Lambda}^t_1\leq t\tr{\Lambda}^t_0,\quad\tr{L}^{\varepsilon,t}_1\leq t\tr{L}^{\varepsilon,t}_0,\quad\text{and}\quad\tr{\delta}^{\edit{V},\varepsilon,t}_1\leq t\tr{\delta}^{\edit{V},\varepsilon,t}_0.
\end{equation}

 Before proving Theorem~\ref{cor:fddconv} we show the following stronger result.
 
\begin{theorem}\label{thm:expectation_conv}
 Assume that Assumption \ref{ass:classical_limit} holds. Furthermore, assume that the random variables $X^V(0)/V$ converge in probability to a constant $z^*$ as $V$ goes to infinity. Assume that the solution $Z$ to \eqref{eq:drn} with $Z(0)=z^*$ exists over the interval $[0,T]$ and that
 \begin{equation*}
     m=\min_{\substack{S\in\Sp\\u\in[0,T]}}Z_S(u)>0.
 \end{equation*}
 Finally, assume that $Y^V(0)=Y(0)$ for all positive integers $V$. Then, 
 \begin{equation}\label{eq:exp_prob}
 P\left(Y^V(t)\neq Y(t)\right)=E\left[\|Y^V(t)-Y(t)\|_\infty\right].
 \end{equation}
 Moreover, for any $0<\varepsilon<m$
 \begin{equation*}
     \sup_{t\in[0,T]}E\left[\|Y^V(t)-Y(t)\|_\infty\right]\leq p^{V,\varepsilon,T}+(\tr{\delta}^{V,\varepsilon,T}_1+\varepsilon \tr{L}^{\varepsilon,}_1)e^{2\tr{\Lambda}^T_1}.
 \end{equation*}
\end{theorem}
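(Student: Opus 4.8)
The plan is to first settle the identity \eqref{eq:exp_prob}, then peel off the ``bad'' event on which $X^V/V$ escapes the $\varepsilon$-tube around $Z$, and finally run a Gronwall estimate on the pair $(Y^V,Y)$ coupled through the shared Poisson processes. For \eqref{eq:exp_prob} I would simply note that, under the identification of $\track$ with the canonical basis of $\RR^{|\track|}$, both $Y^V(t)$ and $Y(t)$ are basis vectors, so $Y^V(t)-Y(t)$ either vanishes or has exactly one $+1$ and one $-1$ entry; hence $\|Y^V(t)-Y(t)\|_\infty=\mathbbm{1}_{\{Y^V(t)\neq Y(t)\}}$, and taking expectations gives the claim.

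For the quantitative bound I would start from $E[\|Y^V(t)-Y(t)\|_\infty]\le p^{V,\varepsilon,t}+E[\mathbbm{1}_{\A_{V,\varepsilon,t}}\|Y^V(t)-Y(t)\|_\infty]$. On $\A_{V,\varepsilon,t}$ the truncation \eqref{eq:minmax} is inactive, so $X^{V,\varepsilon}=X^V$ there; consequently the process $Y^{V,\varepsilon}$ obtained by driving \eqref{eq:Y_kurtznotation} with $X^{V,\varepsilon}$ in place of $X^V$ (keeping the same $N_r$) coincides with $Y^V$ on $\A_{V,\varepsilon,t}$, and it is enough to bound $E[\|Y^{V,\varepsilon}(t)-Y(t)\|_\infty]$, where now $\|X^{V,\varepsilon}(u)/V-Z(u)\|_\infty\le\varepsilon$ for every $u$ by construction. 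Since $Y^{V,\varepsilon}$ and $Y$ share the driving processes and start equal, subtracting the two integral representations and using that each jump vector is a difference of two status basis vectors (hence of sup-norm at most $1$) gives, by the triangle inequality,
\[
\|Y^{V,\varepsilon}(t)-Y(t)\|_\infty\le\sum_{r\in\trackRc}\Big|N_r\big(A^{V}_r(t)\big)-N_r\big(A_r(t)\big)\Big|,
\]
where $A^V_r(t)=\int_0^t\lambda^V_r(Y^{V,\varepsilon}(s),X^{V,\varepsilon}(s))\,ds$ and $A_r(t)=\int_0^t\lambda_r(Y(s),Z(s))\,ds$ are the two time-changed arguments fed to the common process $N_r$.

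The analytic core is the comparison $E\big|N_r(A^V_r(t))-N_r(A_r(t))\big|\le\int_0^t E\big|\lambda^V_r(Y^{V,\varepsilon}(s),X^{V,\varepsilon}(s))-\lambda_r(Y(s),Z(s))\big|\,ds$, after which I would bound the integrand by a case analysis on whether $Y^{V,\varepsilon}(s)=Y(s)$. When the two statuses agree only the rate-approximation and Lipschitz errors survive, so summing the matching family of reactions gives at most $\tr{\delta}^{V,\varepsilon,s}_0+\varepsilon\tr{L}^{\varepsilon,s}_0$ (this uses Lemma~\ref{lem:convergence_hatlambda} together with $\|X^{V,\varepsilon}/V-Z\|_\infty\le\varepsilon$). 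When the statuses differ, the two active reaction families contribute, up to the vanishing error $\tr{\delta}_0$, at most the full aggregate rate each, giving a bound $2\tr{\Lambda}^s_0$ multiplied by the probability $g(s)=E[\|Y^{V,\varepsilon}(s)-Y(s)\|_\infty]=P(Y^{V,\varepsilon}(s)\neq Y(s))$ that the processes have already decoupled. Summing over $r$ and integrating yields $g(t)\le\tr{\delta}^{V,\varepsilon,t}_1+\varepsilon\tr{L}^{\varepsilon,t}_1+2\int_0^t\tr{\Lambda}^s_0\,g(s)\,ds$, and Gronwall's inequality then produces $g(t)\le(\tr{\delta}^{V,\varepsilon,t}_1+\varepsilon\tr{L}^{\varepsilon,t}_1)e^{2\tr{\Lambda}^t_1}$. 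Recombining with $p^{V,\varepsilon,t}$, taking the supremum over $t\in[0,T]$, and using that all the quantities involved are non-decreasing in $t$ (so the maximum is at $t=T$) gives the stated inequality.

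The step I expect to require the most care is the comparison bounding $E\big|N_r(A^V_r(t))-N_r(A_r(t))\big|$ by the expected gap $E\big|A^V_r(t)-A_r(t)\big|$. The obstacle is that the two arguments are neither ordered nor independent of $N_r$ — they are path-functionals of the very processes that $N_r$ drives — so the naive mean-measure identity does not apply directly; I would make this rigorous via the martingale $\cen{N}_r(s)=N_r(s)-s$ and an optional-sampling/compensator computation in the filtration generated by all the driving processes, reducing the expected number of points between the two arguments to the expected length of the random interval. The factor $2$ in the exponent, which is exactly what forces the source term $\tr{\delta}_1+\varepsilon\tr{L}_1$ to be amplified by $e^{2\tr{\Lambda}^T_1}$, traces back to the decoupled case, in which both marginals simultaneously contribute their full transition rate to the growth of $g$.
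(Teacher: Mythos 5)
Your proposal is correct and follows essentially the same route as the paper's proof: the paper likewise introduces the auxiliary process $\hat Y^V$ driven by the truncated $X^{V,\varepsilon}$ (coinciding with $Y^V$ on $\A_{V,\varepsilon,T}$), bounds the expected Poisson increment gap by the expected integrated rate difference, splits the integrand into the $\tr{\delta}$, $\varepsilon\tr{L}$, and $2\tr{\Lambda}_0\,\mathbbm{1}_{\{\hat Y^V\neq Y\}}$ contributions (via a telescoping triangle inequality rather than your case analysis, but yielding the identical Gronwall inequality), and recombines with $p^{V,\varepsilon,T}$ at the end. Your explicit flagging of the optional-sampling justification for $E\bigl[\lvert N_r(A^V_r(t))-N_r(A_r(t))\rvert\bigr]$ is in fact more careful than the paper's brief remark that $\lvert N(t_1)-N(t_2)\rvert$ is distributed as $N(\lvert t_1-t_2\rvert)$.
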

\begin{proof}
 First, note that
 \begin{equation}\label{sigheraihbvilnil}
  \|Y^V(t)-Y(t)\|_\infty=\begin{cases}
                                   1 & \text{if }Y^V(t)\neq Y(t)\\
                                   0 & \text{if }Y^V(t)= Y(t)
                                  \end{cases},
 \end{equation}
 hence \eqref{eq:exp_prob} holds. Consider the process
\begin{equation}\label{eq:def_Y_bar}
 \hat Y^V(t)=Y(0)+\sum_{\trackS+y\to \trackS'+y'\in\trackRc}(\trackS'-\trackS)N_{\trackS+y\to \trackS'+y'}\left(\int_0^t \lambda^V_{\trackS+y\to \trackS'+y'}(\hat Y^V(u),X^{V,\varepsilon}(u))du\right).
\end{equation}
 \edit{Note that if $\trackS'\neq\trackS$ then $\|\trackS'-\trackS\|_\infty=1$. Moreover, for a unit-rate Poisson process $N$, we have
 \begin{equation*}
  |N(t_1)-N(t_2)|=\begin{cases}
                   N(t_1)-N(t_2)&\text{if }t_1\geq t_2\\
                   N(t_2)-N(t_1)&\text{otherwise}
                  \end{cases}.
 \end{equation*}
 In any case, $|N(t_1)-N(t_2)|$ is distributed as $N(|t_1-t_2|)$.} By equations \eqref{eq:tildeY_kurtznotation} and \eqref{eq:def_Y_bar}, using the triangular inequality, we obtain
 \begin{align*}
  E\Big[\|\hat Y^V(t)-Y(t)\|_\infty\Big]&\\
  &\hspace{-90pt}\edit{\leq E\left[ \sum_{\trackS+y\to \trackS'+y'\in\trackRc}\|\trackS'-\trackS\|_\infty \left|N_{\trackS+y\to \trackS'+y'}\left( \int_0^t\lambda^V_{\trackS+y\to \trackS'+y'}(\hat Y^V(u),X^{V,\varepsilon}(u))du-\int_0^t\lambda_{\trackS+y\to \trackS'+y'}(Y(u),Z(u))du\right)\right|\right]}\\
  &\hspace{-90pt}\leq E\left[ \int_0^t \sum_{\trackS+y\to \trackS'+y'\in\trackRc}\left| \lambda^V_{\trackS+y\to \trackS'+y'}(\hat Y^V(u),X^{V,\varepsilon}(u))-\lambda_{\trackS+y\to \trackS'+y'}(Y(u),Z(u))\right|du\right]\\ 
  &\hspace{-90pt}\leq\Upsilon_1+\Upsilon_2+\Upsilon_3
 \end{align*}
 where
 \begin{align*}
  \Upsilon_1&=E\left[\int_0^t\sum_{\trackS+y\to \trackS'+y'\in\trackRc}\left|\lambda^V_{\trackS+y\to \trackS'+y'}(\hat Y^V(u),X^{V,\varepsilon}(u))-\lambda_{\trackS+y\to \trackS'+y'}\left(\hat Y^V(u),\frac{X^{V,\varepsilon}(u)}{V}\right)\right|du\right]\\
  \Upsilon_2&=E\left[ \int_0^t \sum_{\trackS+y\to \trackS'+y'\in\trackRc}\left|\lambda_{\trackS+y\to \trackS'+y'}\left(\hat Y^V(u),\frac{X^{V,\varepsilon}(u)}{V}\right)-\lambda_{\trackS+y\to \trackS'+y'}(\hat Y^V(u),Z(u))\right|du\right]\\
  \Upsilon_3&=E\left[\int_0^t \sum_{\trackS+y\to \trackS'+y'\in\trackRc}\left|\lambda_{\trackS+y\to \trackS'+y'}(\hat Y^V(u),Z(u))-\lambda_{\trackS+y\to \trackS'+y'}(Y(u),Z(u))\right|du\right]
 \end{align*}
 Since for every $\trackS+y\to \trackS'+y'\in\trackRc$ we have
 \begin{align*}
  \lambda^V_{\trackS+y\to \trackS'+y'}(w,x)&=\mathbbm{1}_{\{\trackS\}}(w)\lambda^V_{\trackS+y\to \trackS'+y'}(\trackS,x)\quad\text{for all }x\in\ZZ^d_{\geq0}, w\in\track\\
  \lambda_{\trackS+y\to \trackS'+y'}(w,z)&=\mathbbm{1}_{\{\trackS\}}(w)\lambda^V_{\trackS+y\to \trackS'+y'}(\trackS,z)\quad\text{for all }z\in\RR^d_{\geq0}, w\in \track,
 \end{align*}
 we can write $\Upsilon_1\leq \tr{\delta}^{V,\varepsilon,t}_1$. Similarly, $\Upsilon_2\leq \varepsilon \tr{L}^{\varepsilon,t}_1$. Finally, 
 \begin{align*}
  \Upsilon_3&=E\left[\int_0^t \sum_{\trackS+y\to \trackS'+y'\in\trackRc}\left|\mathbbm{1}_{\{\trackS\}}(\hat Y^V(u))-\mathbbm{1}_{\{\trackS\}}( Y(u))\right|\lambda_{\trackS+y\to \trackS'+y'}(\trackS,Z(u))du\right]\\
  &\leq E\left[\int_0^t \sum_{\trackS\in\track}\left|\mathbbm{1}_{\{\trackS\}}(\hat Y^V(u))-\mathbbm{1}_{\{\trackS\}}(Y(u))\right|\tr{\Lambda}^u_0du\right]\\
  &= \int_0^t 2P\left(Y^V(u)\neq Y(u)\right)\tr{\Lambda}^u_0du=2\int_0^t E\Big[\|\hat Y^V(u)-Y(u)\|_\infty\Big]\tr{\Lambda}^u_0du,
 \end{align*}
 where in the last equality we used \eqref{eq:exp_prob}. In conclusion,
 \begin{equation*}
  E\Big[\|\hat Y^V(t)-Y(t)\|_\infty\Big]\leq (\tr{\delta}^{V,\varepsilon,t}_1+\varepsilon \tr{L}^{\varepsilon,t}_1)+2\int_0^t E\Big[\|\hat Y^V(u)-Y(u)\|_\infty\Big]\tr{\Lambda}^u_0du.
 \end{equation*}
 By the Gronwall inequality we then have
 \begin{equation*}
  E\Big[\|\hat Y^V(t)-Y(t)\|_\infty\Big]\leq (\tr{\delta}^{V,\varepsilon,t}_1+\varepsilon \tr{L}^{\varepsilon,t}_1)e^{2\tr{\Lambda}^t_1}.   
 \end{equation*}
 The result follows by taking the sup over $t\in[0,T]$ on both sides (the quantity on the right-hand side of the inequality is non-decreasing in $t$) and by noting that $\mathbbm{1}_{\A_{V,\varepsilon, T}}\hat Y^V(t)=\mathbbm{1}_{\A_{V,\varepsilon, T}} Y^V(t)$ for all $t\in[0,T]$. Hence,
 \begin{align*}
  \| Y^V(t)-Y(t)\|_\infty&=\| Y^V(t)-Y(t)\|_\infty\mathbbm{1}_{\A^c_{V,\varepsilon, T}}+\|\hat Y^V(t)-Y(t)\|_\infty\mathbbm{1}_{\A_{V,\varepsilon, T}}\\
  &\leq \mathbbm{1}_{\A^c_{V,\varepsilon, T}}+\|\hat Y^V(t)-Y(t)\|_\infty\mathbbm{1}_{\A_{V,\varepsilon, T}}\\
  &\leq \mathbbm{1}_{\A^c_{V,\varepsilon, T}}+\|\hat Y^V(t)-Y(t)\|_\infty.
 \end{align*}
\end{proof}

We are now ready to prove Theorem~\ref{cor:fddconv}

\begin{proof}[Proof of Theorem~\ref{cor:fddconv}]
 It follows from Theorem~\ref{thm:expectation_conv} that $P\left(Y^V(t)\neq Y(t)\right)=E\left[\|Y^V(t)-Y(t)\|_\infty\right]$. Moreover, for any $\varepsilon>0$ we have $\lim_{V\to\infty} p^{V,\varepsilon,T}=0$ by Theorem~\ref{thm:classical}, and $\lim_{V\to\infty}\tr{\delta}^{V,\varepsilon,T}_1=0$ by Lemma~\ref{lem:convergence_hatlambda} and \eqref{eq:bounds}. Hence,
 \begin{equation*}
  \lim_{V\to\infty}\sup_{t\in[0,T]}E\left[\|Y^V(t)-Y(t)\|_\infty\right]\leq \varepsilon \tr{L}^{\varepsilon,T}_1e^{2\tr{\Lambda}^T_1},
 \end{equation*}
 which concludes the proof by the arbitrariness of $\varepsilon>0$ and by the fact that $\tr{L}^{\varepsilon,T}_0$ (hence $\tr{L}^{\varepsilon,T}_1$) is non-decreasing in $\varepsilon$.
\end{proof}

\subsection{Proof of Theorem~\ref{thm:aggregate}}

Similarly to what was done in the previous section, we define the following quantities to give an upper bound for our approximation error. Define
\begin{align*}
    \hat{R}&=\max_{y\to y'\in\Rc}\|\pi(y'-y)\|_\infty,\quad \hat{r}=\max_{\trackS+y\to \trackS'+y'\in\trackRc}\left\|\frac{\sigma(\trackS')}{\alpha(\sigma(\trackS'))}-\frac{\sigma(\trackS)}{\alpha(\sigma(\trackS))}\right\|_\infty,\\
    \hat{\Lambda}_0^t&=\hat{r}\sum_{\trackS+y\to \trackS'+y'\in\trackRc}\lambda_{\trackS+y\to \trackS'+y'}(\trackS,Z(t)),\quad
    \hat{\Lambda}_1^t=\int_0^t\hat{\Lambda}_0^u du,\\
    \hat{\Lambda}_2^t&=\max_{\trackS\in\track\setminus\{\Delta\}}\sum_{\trackS+y\to \trackS'+y'\in\trackRc}\int_0^t \lambda_{\trackS+y\to \trackS'+y'}(\trackS,Z(u))du,\\
    \hat{\Lambda}^{V,\varepsilon,t}_3&=\int_0^t\sup_{z\in\Omega^{\varepsilon, u}_1}\sum_{y\to y'\in\Rc}\frac{\lambda^V_{y\to y'}(\floor{Vz})}{V}du,\\
    \omega^{\varepsilon,t}&=\hat{r}\sup_{\substack{(z,z')\in\Omega_2^{\varepsilon,t}\\ \|z-z'\|_\infty\leq\varepsilon}}\sum_{\trackS+y\to \trackS'+y'\in\trackRc} \left|\lambda_{\trackS+y\to \trackS'+y'}\left(\trackS,z\right)-\lambda_{\trackS+y\to \trackS'+y'}(\trackS,z')\right|,\\
    %\hat{\delta}^{V,\varepsilon,t}&=\hat{R}t \sup_{z\in\Omega^{\varepsilon, t}_1}\sum_{y\to y'\in\Rc}\left|\frac{\lambda^V_{y\to y'}(\floor{Vz})}{V}-\lambda_{y\to y'}(z)\right|,\\
    \zeta^{\varepsilon, t}&=\int_0^t(\|Z(u))\|_\infty+\varepsilon)du.
\end{align*}

Note that $\hat{\Lambda}_0^t$, $\hat{\Lambda}_2^t$, and $\zeta^{\varepsilon, t}$ are finite for any $t\in[0,T]$, because $Z$ is defined over the whole interval $[0,T]$ and the functions $\lambda_{\trackS+y\to \trackS'+y'}$ are continuous on $\RR^d_{>0}$ by Lemma~\ref{lem:convergence_hatlambda}. Lemma~\ref{lem:convergence_hatlambda} also implies that $\omega^{\varepsilon,t}$ is finite for all $t\in [0,T]$ and $\varepsilon\in\RR_{>0}$. Finally, $\hat{\Lambda}^{V,\varepsilon,t}_3$ is finite by Assumption~\ref{ass:classical_limit}. Note that, for fixed $V$ and $\varepsilon$, the quantities $\hat{\Lambda}^{V,\varepsilon,t}_3$, $\omega^{\varepsilon,t}$, and $\zeta^{\varepsilon, t}$ are non-decreasing functions of $t$.

We now state and prove the following result, which immediately implies Theorem~\ref{thm:aggregate}. Note that $\delta^{V,\varepsilon,t}_1$ is as defined in Section~\ref{sec:estimates_p}.

\begin{theorem}\label{thm:aggrconv}
 Consider a family of tracking stochastic reaction systems $(Y^V, X^V)$, and assume that Assumptions~\ref{ass:classical_limit} and \ref{ass:subconservative} are satisfied. Let $z^*\in\RR^d_{>0}$ and $\tr{X}^V(0)=\floor{Vz^*}$. Define the process $\tr{X}^V$ by
 \begin{equation*}
  \tr{X}^V(t)=\sum_{\trackS\in\track\setminus\{\Delta\}}\sum_{i=1}^{\tr{X}^V_{\sigma(\trackS)}(0)}\frac{\sigma(Y^{\trackS, i}(t))}{\alpha(\sigma(Y^{\trackS, i}(t)))},
 \end{equation*}
 where the processes $(Y^{\trackS, i})_{\trackS\in\track\setminus\{\Delta\}, i\in\ZZ_{\geq1}}$ are independent and satisfy
 \begin{equation*}
  Y^{\trackS, i}(t)=\trackS+\sum_{\trackS'+y\to \trackS''+y'\in\trackRc}(\trackS''-\trackS')N^{\trackS, i}_{\trackS'+y\to \trackS''+y'}\left(\int_0^t \lambda_{\trackS'+y\to \trackS''+y'}(Y(u)^{\trackS, i},Z(u))du\right),
 \end{equation*}
 for a family of independent, identically distributed unit-rate Poisson processes $\{N^{\trackS, i}_r\}_{\trackS\in\track\setminus\{\Delta\}, i\in\ZZ_{\geq1}, r\in\trackRc}$. For arbitrary $\nu_1,\nu_2, \nu_3\in\RR_{>0}$ define
 \begin{equation*}
     \nu=e^{\hat{\Lambda}_1^T}\left(\hat{R}\nu_1+\hat{r}\nu_2+\nu_3+\hat{R}\delta^{V,\varepsilon,T}_1+\omega^{\varepsilon,T}\zeta^{\varepsilon, T}\right)
 \end{equation*}
 Then,
 \begin{multline*}
     P\left(\sup_{0\leq t\leq T}\left\|\frac{\proj(X^V(t))}{V}-\frac{\tr{X}^V(t)}{V}\right\|_\infty>\nu\right)\leq 6\exp\left(\frac{e \hat{\Lambda}^{V,\varepsilon,t}_3}{2}-\frac{\nu_1\sqrt{V}}{3}\right)\\+6\exp\left(\frac{e c\hat{\Lambda}_2^t}{2}-\frac{\nu_2\sqrt{V}}{3}\right)
     +P\left(\left\|\frac{\proj(X^V(0))}{V}-\frac{\tr{X}^V(0)}{V}\right\|_\infty>\nu_3\right)+p^{V,\varepsilon,T},
 \end{multline*}
 where $c=\sum_{S\in\Sp} \alpha(S) z^*_S$.
\end{theorem}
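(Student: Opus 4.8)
The plan is to mimic, in full generality, the explicit computation carried out for the SIS model in Example~\ref{ex:SIS2}, and to run a Gronwall argument on the discrepancy process $D(t)=\sup_{0\leq s\leq t}\|\proj(X^V(s))/V-\tr{X}^V(s)/V\|_\infty$ while restricting to the favourable event $\A_{V,\varepsilon,T}$, on which $X^V/V$ stays within $\varepsilon$ of $Z$. The complement of this event contributes the additive term $p^{V,\varepsilon,T}$, so throughout the estimate we may assume $X^V(u)/V\in\Omega_1^{\varepsilon,u}$ and $(X^V(u)/V,Z(u))\in\Omega_2^{\varepsilon,u}$ for all $u\in[0,T]$. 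First I would write both normalized processes as Poisson integral equations. For $\proj(X^V)/V$ this is just the projection of \eqref{eq:X_kurtznotation}, whose jumps have size at most $\hat{R}/V$. For $\tr{X}^V/V$ I would start from the independent single-molecule representation and, using superposition (merging of independent Poisson processes into a single time-changed unit-rate process, exactly as in the two displayed identities preceding \eqref{eq:tobebounded}), rewrite the sum over the trajectories $Y^{\trackS,i}$ as a family of aggregate Poisson processes indexed by $r\in\trackRc$, evaluated at occupancy-weighted compensators; here each jump changes $\tr{X}^V/V$ by a vector of norm at most $\hat{r}/V$.

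Subtracting the two representations and centering the Poisson terms, $\|\proj(X^V(t))/V-\tr{X}^V(t)/V\|_\infty$ is bounded by (i) the initial discrepancy $\|\proj(X^V(0))/V-\tr{X}^V(0)/V\|_\infty$; (ii) a fluctuation term $\hat{R}/V$ times the supremum of the centered aggregate Poisson processes driving $X^V$; (iii) a fluctuation term $\hat{r}/V$ times the supremum of the centered aggregate Poisson processes driving $\tr{X}^V$; and (iv) the difference of the two compensators. The heart of the argument is to split (iv) by the triangle inequality and the explicit form of $\lambda_{\trackS+y\to \trackS'+y'}$ from Lemma~\ref{lem:convergence_hatlambda}: one piece replaces $\lambda^V_{y\to y'}(X^V)/V$ by $\lambda_{y\to y'}(X^V/V)$ and is bounded on $\A_{V,\varepsilon,T}$ by $\hat{R}\delta^{V,\varepsilon,T}_1$; a second piece replaces the state $X^V/V$ by $Z$ inside the rate functions and, using the modulus $\omega^{\varepsilon,t}$ together with the mass bound $\zeta^{\varepsilon,t}$, is controlled by $\omega^{\varepsilon,T}\zeta^{\varepsilon,T}$; and a third, ``self'' piece compares the occupancy densities of the aggregated trajectories against $\proj(X^V)/V$ and is bounded pointwise by $\hat{\Lambda}^u_0\,D(u)$. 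Here Assumption~\ref{ass:subconservative} is exactly what guarantees that the net jump vectors of the two representations line up after the $\alpha$-normalization, so that the leading-order compensators cancel and only these three remainders survive. Taking suprema and applying Gronwall's inequality to $D$ then produces the prefactor $e^{\hat{\Lambda}_1^T}$ and the bracketed sum $\hat{R}\nu_1+\hat{r}\nu_2+\nu_3+\hat{R}\delta^{V,\varepsilon,T}_1+\omega^{\varepsilon,T}\zeta^{\varepsilon,T}$ defining $\nu$.

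It remains to turn the two fluctuation terms into the stated exponential bounds. For this I would apply Lemma~\ref{lem:cpp_classical} to each centered aggregate Poisson process, bounding the argument at which it is evaluated by a deterministic multiple of $V$: on $\A_{V,\varepsilon,T}$ the total firing rate of $X^V$ is at most $V\,\hat{\Lambda}^{V,\varepsilon,t}_3$, while the total firing rate of the merged approximation process is at most $Vc\,\hat{\Lambda}_2^t$, with $c=\sum_{S\in\Sp}\alpha(S)z^*_S$ bounding the normalized number of single-molecule trajectories. Writing the compensators in the form $nT$ with $n=V$ and invoking Lemma~\ref{lem:cpp_classical} with thresholds $\nu_1$ and $\nu_2$ yields $6\exp(\tfrac{e}{2}\hat{\Lambda}^{V,\varepsilon,t}_3-\tfrac{\nu_1}{3}\sqrt{V})$ and $6\exp(\tfrac{e}{2}c\hat{\Lambda}_2^t-\tfrac{\nu_2}{3}\sqrt{V})$. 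A final union bound over the four contributions (the two fluctuation events at levels $\nu_1,\nu_2$, the initial-condition event at level $\nu_3$, and the complement $\A^c_{V,\varepsilon,T}$ of probability $p^{V,\varepsilon,T}$) gives the claimed inequality.

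The main obstacle I anticipate is step (iv), and specifically the bookkeeping that lets the two compensators be compared term by term. The full process $X^V$ is driven by genuinely correlated molecules whose transition rates depend on the fluctuating state $X^V$, whereas $\tr{X}^V$ is an agglomeration of independent trajectories driven by the deterministic $Z$; there is no pathwise coupling making individual molecules coincide. The correct move is therefore to compare only the aggregated, $\alpha$-normalized quantities and to verify, via the explicit rate formula of Lemma~\ref{lem:convergence_hatlambda} and the conservation identity of Assumption~\ref{ass:subconservative}, that the expected per-reaction increments of $\proj(X^V)$ and of $\tr{X}^V$ agree at leading order. Isolating the genuinely self-referential part of the compensator difference so that it is proportional to $D(u)$, rather than to an uncontrolled quantity, is the delicate point on which the whole Gronwall scheme rests; the remaining estimates (the scaling term $\delta$, the Lipschitz term $\omega\zeta$, and the two Poisson fluctuations) are then routine given the machinery already developed for Theorem~\ref{thm:estimate_p}.
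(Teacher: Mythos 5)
Your proposal is correct and follows essentially the same route as the paper's proof: the same five-part decomposition (initial discrepancy, two centered Poisson fluctuations merged by superposition, the $\hat{R}\delta^{V,\varepsilon,T}_1$ scaling term, the $\omega^{\varepsilon,T}\zeta^{\varepsilon,T}$ Lipschitz term, and a self-referential term proportional to $\hat{\Lambda}^u_0$ times the discrepancy), the same use of Assumption~\ref{ass:subconservative} to align the $\alpha$-normalized compensators via the identity for $\proj(y'-y)$, Gronwall, and Lemma~\ref{lem:cpp_classical} with clocks bounded by $V\hat{\Lambda}^{V,\varepsilon,t}_3$ and $Vc\hat{\Lambda}_2^t$. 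The only cosmetic difference is that the paper formalizes your ``restrict to $\A_{V,\varepsilon,T}$'' step through the clamped processes $X^{V,\varepsilon}$ and $\hat{X}^{V,\varepsilon}$ so that the deterministic bounds hold pathwise.
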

\begin{proof}
  By the superposition property of Poisson processes, for all $V\in\ZZ_{\geq1}$ there exist two unit-rate Poisson processes $U_1^V$ and $U_2^V$ such that for all $t\in\RR_{\geq0}$
  \begin{equation*}
     U_1^V\left(\sum_{y\to y'\in\Rc} \int_0^t \lambda^V_{y\to y'}(X^{V,\varepsilon}(u))du\right)=\sum_{y\to y'\in\Rc} N_{y\to y'}\left(\int_0^t \lambda^V_{y\to y'}(X^{V,\varepsilon}(u))du\right)
 \end{equation*}
 and
 \begin{multline*}
     U_2^V\left(\sum_{\trackS\in\track\setminus\{\Delta\}}\sum_{i=1}^{\tr{X}^V_{\sigma(\trackS)}(0)}\sum_{\trackS'+y\to \trackS''+y'\in\trackRc}\int_0^t\lambda_{\trackS'+y\to \trackS''+y'}(Y^{\trackS, i}(u),Z(u))du\right)\\
     =\sum_{\trackS\in\track\setminus\{\Delta\}}\sum_{i=1}^{\tr{X}^V_{\sigma(\trackS)}(0)}\sum_{\trackS'+y\to \trackS''+y'\in\trackRc} N^{\trackS, i}_{\trackS'+y\to \trackS''+y'}\left(\int_0^t \lambda_{\trackS'+y\to \trackS''+y'}(Y^{\trackS, i}(u),Z(u))du\right)
 \end{multline*}
 Note that
 \begin{multline*}
  \tr{X}^V(t)=\tr{X}^V(0)+\sum_{\trackS\in\track\setminus\{\Delta\}}\sum_{\trackS'+y\to \trackS''+y'\in\trackRc}\sum_{i=1}^{\tr{X}^V_{\sigma(\trackS)}(0)}\left(\frac{\sigma(\trackS'')}{\alpha(\sigma(\trackS''))}-\frac{\sigma(\trackS')}{\alpha(\sigma(\trackS'))}\right)\times\\
  \times N^{\trackS, i}_{\trackS'+y\to \trackS''+y'}\left(\int_0^t \lambda_{\trackS'+y\to \trackS''+y'}(Y(u)^{\trackS, i},Z(u))du\right).
 \end{multline*}
 Hence, by triangular inequality,
  \begin{align*}
     \sup_{0\leq u\leq t} \left\|\frac{\proj(\hat{X}^{V,\varepsilon}(u))}{V}-\frac{\tr{X}^V(u)}{V}\right\|_\infty\leq \left\|\frac{\proj(X^V(0))}{V}-\frac{\tr{X}^V(0)}{V}\right\|_\infty+\sum_{i=1}^5\Upsilon_i
  \end{align*}
  where
  \begin{align*}
%  \Upsilon_1(V)=&\mathbbm{1}_{(\delta, +\infty)}\left(\sup_{0\leq h\leq t}\left\|\frac{X^V(h)}{V}-Z(h)\right\|\right)\frac{1}{V}\left(\sum_{S\in\tracksub} \alpha(S)\left(X^V_S(0)+\hat{X}^V_S(0)\right)\right)\\
  \Upsilon_1=&\sup_{0\leq u\leq t}\sum_{y\to y'\in\Rc}\|\proj(y'-y)\|_\infty\frac{1}{V}\left|\cen{N}_{y\to y'}\left(\int_0^u \lambda^V_{y\to y'}(X^{V,\varepsilon}(w))dw\right)\right|\\
  &\leq \frac{\hat{R}}{V}\sup_{0\leq u\leq t}\left|\cen{U}^V_1\left(\sum_{y\to y'\in\Rc}\int_0^u \lambda^V_{y\to y'}(X^{V,\varepsilon}(w))dw\right)\right|\\
  %&\leq R\sum_{y\to y'\in\Rc}\frac{1}{V}\sup_{0\leq h\leq t}\left|\cen{N}_{y\to y'}\left(\int_0^h \lambda^V_{y\to y'}(X^{V,\varepsilon}(u))du\right)\right|\\
  %\left((\tr{\Lambda}^t_0+L^{\varepsilon,t}\varepsilon+\delta^{V,\varepsilon,t})t\right)
  \Upsilon_2=&\sup_{0\leq u\leq t}\sum_{\trackS\in\track\setminus\{\Delta\}}\sum_{\trackS'+y\to \trackS''+y'\in\trackRc}\sum_{i=1}^{\tr{X}^V_{\sigma(\trackS)}(0)}\left\|\frac{\sigma(\trackS'')}{\alpha(\sigma(\trackS''))}-\frac{\sigma(\trackS')}{\alpha(\sigma(\trackS'))}\right\|_\infty\times\\
  &\quad\times\frac{1}{V}\left|\cen{N}^{\trackS, i}_{\trackS'+y\to \trackS''+y'}\left(\int_0^u \lambda_{\trackS'+y\to \trackS''+y'}(Y^{\trackS, i}(w),Z(w))dw\right)\right|\\
  &\leq \frac{\hat{r}}{V} \sup_{0\leq u\leq t} \left|\cen{U}_2^V\left(\sum_{\trackS\in\track\setminus\{\Delta\}}\sum_{\trackS'+y\to \trackS''+y'\in\trackRc}\sum_{i=1}^{\tr{X}^V_{\sigma(\trackS)}(0)}\int_0^u \lambda_{\trackS'+y\to \trackS''+y'}(Y^{\trackS, i}(w),Z(w))dw\right)\right|\\
  %&\leq r \sum_{\trackS\in\track\setminus\{\Delta\}}\sum_{\trackS'+y\to \trackS''+y'\in\trackRc}\sum_{i=1}^{\tr{X}^V_{\sigma(\trackS)}(0)}\frac{1}{V}\sup_{0\leq h\leq t}\left|\cen{N}^{\trackS, i}_{\trackS'+y\to \trackS''+y'}\left(\int_0^h \lambda_{\trackS'+y\to \trackS''+y'}(Y^{\trackS, i}(u),Z(u))du\right)\right|\\
  \Upsilon_3=&\sup_{0\leq u\leq t}\sum_{y\to y'\in\Rc}\|\proj(y'-y)\|_\infty\int_0^u \left|\frac{\lambda^V_{y\to y'}(X^{V,\varepsilon}(w))}{V}-\lambda_{y\to y'}\left(\frac{X^{V,\varepsilon}(w)}{V}\right)\right|dw\\
  &\leq \hat{R}\delta^{V,\varepsilon, t}_1\\
  \Upsilon_4=&\sup_{0\leq u\leq t}\Bigg\|
  \sum_{y\to y'\in\Rc}\proj(y'-y)\int_0^u \lambda_{y\to y'}\left(\frac{X^{V,\varepsilon}(w)}{V}\right)dw\\
  &\quad-\sum_{\trackS'+y\to\trackS''+y'\in\trackRc}\left(\frac{\sigma(\trackS'')}{\alpha(\sigma(\trackS''))}-\frac{\sigma(\trackS')}{\alpha(\sigma(\trackS'))}\right)\int_0^u \frac{X^{V,\varepsilon}_{\sigma(\trackS')}(w)}{V}\lambda_{\trackS'+y\to \trackS''+y'}(\trackS',Z(w))dw\Bigg\|_\infty
  \end{align*}
  \begin{align*}
  \Upsilon_5=&\sup_{0\leq u\leq t}\Bigg\|\sum_{\trackS'+y\to\trackS''+y'\in\trackRc}\left(\frac{\sigma(\trackS'')}{\alpha(\sigma(\trackS''))}-\frac{\sigma(\trackS')}{\alpha(\sigma(\trackS'))}\right)\int_0^u \frac{X^{V,\varepsilon}_{\sigma(\trackS')}(w)}{V}\lambda_{\trackS'+y\to \trackS''+y'}(\trackS',Z(w))dw\\
  &\quad-\frac{1}{V}\sum_{\trackS\in\track\setminus\{\Delta\}}\sum_{\trackS'+y\to \trackS''+y'\in\trackRc}\sum_{i=1}^{\tr{X}^V_{\sigma(\trackS)}(0)}\left(\frac{\sigma(\trackS'')}{\alpha(\sigma(\trackS''))}-\frac{\sigma(\trackS')}{\alpha(\sigma(\trackS'))}\right)\int_0^u \lambda_{\trackS'+y\to \trackS''+y'}(Y^{\trackS, i}(w),Z(w))dw\Bigg\|_\infty
 \end{align*}
 We first focus on rewriting $\Upsilon_4$ and $\Upsilon_5$. To this aim, first note that by identifying species with canonical vectors of $\RR^d$ as previously done in the paper, we have that for all $y\in\C$
 \begin{equation*}
     \pi(y)=\sum_{S\in\tracksub}y_S S=\sum_{\trackS\in\track\setminus\{\Delta\}}\frac{y_{\sigma(\trackS)}\sigma(\trackS)}{\alpha(\sigma(\trackS))}.
 \end{equation*}
 Hence, for all $y\to y'\in\Rc$
  \begin{align*}
     \pi(y'-y)&=\sum_{\trackS'\in\track\setminus\{\Delta\}}\frac{y_{\sigma(\trackS')}\sigma(\trackS')}{\alpha(\sigma(\trackS'))} -\sum_{\trackS\in\track\setminus\{\Delta\}}\frac{y_{\sigma(\trackS)}\sigma(\trackS)}{\alpha(\sigma(\trackS))}\\
     &=\sum_{\trackS'\in\track\setminus\{\Delta\}}\frac{\sigma(\trackS')}{\alpha(\sigma(\trackS'))}\sum_{\trackS\in\track\setminus\{\Delta\}}y_{\sigma(\trackS)}p_{y\to y'}(\trackS, \trackS') -\sum_{\trackS\in\track\setminus\{\Delta\}}\frac{y_{\sigma(\trackS)}}{\alpha(\sigma(\trackS))} \sigma(\trackS),
 \end{align*}
 where we used Assumption~\ref{ass:subconservative} in the last equality. By recalling that $\sigma(\Delta)=0$ and $\sum_{\trackS'\in\track}p_{y\to y'}(\trackS,\trackS')$ for all $y\to y'\in\Rc$ and $\trackS\in\track$, we further obtain
 \begin{align*}
     \pi(y'-y)=&\sum_{\trackS'\in\track}\frac{\sigma(\trackS')}{\alpha(\sigma(\trackS'))}\sum_{\trackS\in\track\setminus\{\Delta\}}y_{\sigma(\trackS)}p_{y\to y'}(\trackS, \trackS') \\
     &-\sum_{\trackS\in\track\setminus\{\Delta\}}\frac{y_{\sigma(\trackS)}}{\alpha(\sigma(\trackS))} \sigma(\trackS)\sum_{\trackS'\in\track}p_{y\to y'}(\trackS, \trackS')\\
     =&\sum_{\trackS\in\track\setminus\{\Delta\}}\sum_{\trackS'\in\track}\left(\frac{\sigma(\trackS')}{\alpha(\sigma(\trackS'))}-\frac{\sigma(\trackS)}{\alpha(\sigma(\trackS))}\right)y_{\sigma(\trackS)}p_{y\to y'}(\trackS,\trackS').
 \end{align*}
 It follows that
 \begin{align*}
     &\sum_{y\to y'\in\Rc}\proj(y'-y)\int_0^u \lambda_{y\to y'}\left(\frac{X^{V,\varepsilon}(w)}{V}\right)dw\\
     &\;=\sum_{\trackS'+y\to \trackS''+y'\in\trackRc}\left(\frac{\sigma(\trackS'')}{\alpha(\sigma(\trackS''))}-\frac{\sigma(\trackS')}{\alpha(\sigma(\trackS'))}\right)\int_0^u y_{\sigma(\trackS')}p_{y\to y'}(\trackS',\trackS'')\lambda_{y\to y'}\left(\frac{X^{V,\varepsilon}(w)}{V}\right)dw\\
     &\;=\sum_{\trackS'+y\to \trackS''+y'\in\trackRc}\left(\frac{\sigma(\trackS'')}{\alpha(\sigma(\trackS''))}-\frac{\sigma(\trackS')}{\alpha(\sigma(\trackS'))}\right)\int_0^u \frac{X^{V,\varepsilon}_{\sigma(\trackS)}(w)}{V}\lambda_{\trackS'+y\to \trackS''+y'}\left(\trackS',\frac{X^{V,\varepsilon}(w)}{V}\right)dw,
 \end{align*}
 which in turn implies
 \begin{align*}
     \Upsilon_4\leq& \sup_{0\leq u\leq t}\sum_{\trackS'+y\to \trackS''+y'\in\trackRc}\left\|\frac{\sigma(\trackS'')}{\alpha(\sigma(\trackS''))}-\frac{\sigma(\trackS')}{\alpha(\sigma(\trackS'))}\right\|_\infty\times\\
     &\quad\times\int_0^u \frac{X^{V,\varepsilon}_{\sigma(\trackS)}(w)}{V}\left|\lambda_{\trackS'+y\to \trackS''+y'}\left(\trackS',\frac{X^{V,\varepsilon}(w)}{V}\right)-\lambda_{\trackS'+y\to \trackS''+y'}(\trackS',Z(w))\right|dw\\
     \leq& \omega^{\varepsilon,t}\zeta^{\varepsilon,t}.
 \end{align*}
 By summing over the values of the single-molecule trajectories, we also have 
 \begin{equation*}
  \sum_{\trackS\in\track\setminus\{\Delta\}}\sum_{i=1}^{\tr{X}^V_{\sigma(\trackS)}(0)}\lambda_{\trackS'+y\to \trackS''+y'}(Y^{\trackS, i}(w),Z(w))=\tr{X}^V_{\sigma(\trackS\edit{'})}(w)\lambda_{\trackS'+y\to \trackS''+y'}(\trackS\edit{'},Z(w)),
 \end{equation*}
 which implies
  \begin{align*}
     \Upsilon_5\leq& \sup_{0\leq u\leq t} \sum_{\trackS'+y\to \trackS''+y'\in\trackRc}\left\|\frac{\sigma(\trackS'')}{\alpha(\sigma(\trackS''))}-\frac{\sigma(\trackS')}{\alpha(\sigma(\trackS'))}\right\|_\infty\int_0^u \left|\frac{X^{V,\varepsilon}_{\sigma(\trackS\edit{'})}(w)}{V}-\frac{\tr{X}^{V}_{\sigma(\trackS\edit{'})}(w)}{V}\right|\lambda_{\trackS'+y\to \trackS''+y'}(\trackS',Z(w))dw\\
     %\leq& \sum_{\trackS'+y\to \trackS''+y'\in\trackRc}\left\|\frac{\sigma(\trackS'')}{\alpha(\sigma(\trackS''))}-\frac{\sigma(\trackS')}{\alpha(\sigma(\trackS'))}\right\|_\infty\int_0^t \left\|\frac{X^{V,\varepsilon}(u)}{V}-\frac{\tr{X}^{V}(u)}{V}\right\|_\infty\lambda_{\trackS'+y\to \trackS''+y'}(\trackS',Z(u))du\\
     \leq& \int_0^t\left\|\frac{X^{V,\varepsilon}(u)}{V}-\frac{\tr{X}^{V}(u)}{V}\right\|_\infty \hat{\Lambda}_0^u du\\
     =&\mathbbm{1}_{\A^c_{V,\varepsilon,t}}\int_0^t\left\|\frac{X^{V,\varepsilon}(u)}{V}-\frac{\tr{X}^{V}(u)}{V}\right\|_\infty \hat{\Lambda}_0^u du+ \mathbbm{1}_{\A_{V,\varepsilon,t}}\int_0^t\left\|\frac{\hat{X}^{V,\varepsilon}(u)}{V}-\frac{\tr{X}^{V}(u)}{V}\right\|_\infty \hat{\Lambda}_0^u du.\\
     \leq& \mathbbm{1}_{\A^c_{V,\varepsilon,t}}M^{V,\varepsilon,t}+ \int_0^t\left\|\frac{\hat{X}^{V,\varepsilon}(u)}{V}-\frac{\tr{X}^{V}(u)}{V}\right\|_\infty \hat{\Lambda}_0^u du,
 \end{align*}
 where
 \begin{equation*}
     M^{V,\varepsilon, t}=\int_0^t \left(\|Z(u)\|_\infty+\varepsilon+\sum_{S\in\Sp}\alpha(S)\frac{\tr{X}^V_S(0)}{V}\right)\hat{\Lambda}_0^u du
 \end{equation*}
 is an almost surely finite random variable, non-decreasing in $t$.
 Hence, putting everything together and applying the Gronwall inequality we have that almost surely
 \begin{align*}
      &\sup_{0\leq t\leq T}\left\|\frac{\proj(\hat{X}^{V,\varepsilon}(t))}{V}-\frac{\tr{X}^V(t)}{V}\right\|_\infty\leq e^{\hat{\Lambda}_1^T} \frac{\hat{R}}{V}\sup_{0\leq t\leq T}\left|\cen{U}^V_1\left(\sum_{y\to y'\in\Rc}\int_0^t \lambda^V_{y\to y'}(X^{V,\varepsilon}(u))du\right)\right|\\
     &\;+e^{\hat{\Lambda}_1^T} \frac{\hat{r}}{V}\sup_{0\leq t\leq T}\left|\cen{U}^V_2\left(\sum_{\trackS\in\track\setminus\{\Delta\}}\sum_{\trackS'+y\to \trackS''+y'\in\trackRc}\sum_{i=1}^{\tr{X}^V_{\sigma(\trackS)}(0)}\int_0^t \lambda_{\trackS'+y\to \trackS''+y'}(Y^{\trackS, i}(u),Z(u))du\right)\right|\\
     &\;+e^{\hat{\Lambda}_1^T}\Bigg(\left\|\frac{\proj(X^V(0))}{V}-\frac{\tr{X}^V(0)}{V}\right\|_\infty+\hat{R}\delta^{V,\varepsilon,T}_1+\omega^{\varepsilon,T}\zeta^{\varepsilon,T}+\mathbbm{1}_{\A^c_{V,\varepsilon,T}}M^{V,\varepsilon,T}\Bigg).
 \end{align*}
 Now note that if $A_1, A_2, \dots, A_j$ are random variables and $a_1, a_2, \dots, a_j$ are positive real numbers, then
 \begin{equation*}
     P\left(\sum_{i=1}^j A_i>\sum_{i=1}^j a_i\right)\leq P\left(\bigcup_{i=1}^j (A_i>a_i)\right)\leq \sum_{i=1}^j P(A_i>a_i).
 \end{equation*}
 Hence, if $\nu$ is as in the statement of the theorem and $\nu<\varepsilon$, 
 \begin{align*}
     &P\left(\sup_{0\leq t\leq T}\left\|\frac{\proj(X^V(t))}{V}-\frac{\tr{X}^V(t)}{V}\right\|_\infty>\nu\right)=P\left(\sup_{0\leq t\leq T}\left\|\frac{\proj(\hat{X}^{V,\varepsilon}(t))}{V}-\frac{\tr{X}^V(t)}{V}\right\|_\infty>\nu\right)\\
     &\leq P\left(\frac{1}{V}\sup_{0\leq t\leq T}\left|\cen{U}^V_1\left(\sum_{y\to y'\in\Rc}\int_0^t \lambda^V_{y\to y'}(X^{V,\varepsilon}(u))du\right)\right|>\nu_1\right)\\
     &\;+P\left(\frac{1}{V}\sup_{0\leq t\leq T}\left|\cen{U}^V_2\left(\sum_{\trackS\in\track\setminus\{\Delta\}}\sum_{\trackS'+y\to \trackS''+y'\in\trackRc}\sum_{i=1}^{\tr{X}^V_{\sigma(\trackS)}(0)}\int_0^t \lambda_{\trackS'+y\to \trackS''+y'}(Y^{\trackS, i}(u),Z(u))du\right)\right|>\nu_2\right)\\
     &\;+p^{V,\varepsilon,T}.
 \end{align*}
 Since for all $t\in[0,T]$
  \begin{equation*}
     \int_0^t \lambda^V_{y\to y'}(X^{V,\varepsilon}(u))du\leq V\hat{\Lambda}^{V,\varepsilon,t}_3
 \end{equation*}
 and
 \begin{equation*}
     \sum_{\trackS\in\track\setminus\{\Delta\}}\sum_{\trackS'+y\to \trackS''+y'\in\trackRc}\sum_{i=1}^{\tr{X}^V_{\sigma(\trackS)}(0)}\int_0^t \lambda_{\trackS'+y\to \trackS''+y'}(Y^{\trackS, i}(u),Z(u))du\leq V c \hat{\Lambda}_2^t,
 \end{equation*}
 the proof is concluded by Lemma~\ref{lem:cpp_classical}.
 \end{proof}
%  Before proceeding, it is convenient to define the functions $\nu^V_{y\to y', \trackS}$ from $\ZZ_{\geq0}^d$ to $\RR_{\geq0}$ as
%  \begin{equation*}
%   \nu^V_{y\to y', \trackS}(x)=\mathbbm{1}_{(0,\infty)}(x_{\sigma(\trackS)})\frac{\lambda^V_{y\to y'}(x)}{x_{\sigma(\trackS)}}.
%  \end{equation*}
%  Note that $\lambda^V_{\trackS+y\to \trackS'+y'}(w,x)=\mathbbm{1}_{w}(\trackS)y_{\sigma(\trackS)}p_{y\to y'}(\trackS, \trackS')\nu^V_{y\to y', \trackS}(x)$, hence it follows from Lemma~\ref{lem:convergence_hatlambda} that for any compact set $K\subset \mathbb{R}_{>0}^d$ we have
%  \begin{equation*}
%   \lim_{V\to\infty}\sup_{z\in K}|\nu^V_{y\to y', \trackS}(\floor{Vz})-\nu^V_{y\to y', \trackS}(z)|=0,
%  \end{equation*}
%  where $\nu_{y\to y', \trackS}$ is a locally Lipschitz function on $\RR^d_{>0}$ defined by
%   \begin{equation*}
%   \nu_{y\to y', \trackS}(z)=\mathbbm{1}_{(0,\infty)}(z_{\sigma(\trackS)})\frac{\lambda_{y\to y'}(z)}{z_{\sigma(\trackS)}}
%  \end{equation*}
%  for all $z\in\RR^d_{\geq0}$. 

 \begin{proof}[Proof of Theorem~\ref{thm:aggregate}]
  Note that by Lemma~\ref{lem:subconservative} and by the fact that $\alpha(S)\geq 1$ for all $S\in\Sp$ in \eqref{eq:definition_trX},
 \begin{align*}
  \left\|\frac{\proj(X^V(h))}{V}-\frac{\tr{X}^V(h)}{V}\right\|_1&\leq \left\|\frac{\proj(X^V(h))}{V}\right\|_1+\left\|\frac{\tr{X}^V(h)}{V}\right\|_1\\
  &\leq\frac{1}{V}\left(\sum_{S\in\tracksub} \alpha(S)\left(X^V_S(0)+\tr{X}^V_S(0)\right)\right).
 \end{align*}
 Under the assumption that both $X^V(0)$ and $\tr{X}^V(0)$ have finite expectation and converge in probability to $z^*$, and by the equivalence of norms in finite dimension, we conclude there exists $M\in\RR_{>0}$ such that
 \begin{equation*}
     \sup_{V\in\ZZ_{\geq1}}E\left[\left\|\frac{\proj(X^V(h))}{V}-\frac{\tr{X}^V(h)}{V}\right\|_\infty\right]\leq M.
 \end{equation*}
 Hence, if $\nu$ is as in Theorem~\ref{thm:aggrconv}, we have that 
 \begin{multline*}
  E\left[\sup_{0\leq t\leq T}\left\|\frac{\proj(X^V(h))}{V}-\frac{\tr{X}^V(h)}{V}\right\|_\infty\right]\leq\nu+ 6Me^{\frac{\hat{\Lambda}^{V,\varepsilon,t}_3}{2}-\frac{\nu_1\sqrt{V}}{3}}\\
  +6Me^{\frac{c\hat{\Lambda}_2^t}{2}-\frac{\nu_2\sqrt{V}}{3}}+MP\left(\left\|\frac{\proj(X^V(0))}{V}-\frac{\tr{X}^V(0)}{V}\right\|_\infty>\nu_3\right)+Mp^{V,\varepsilon,T}.
 \end{multline*}
 The proof is concluded if we can show that for all $T\in\RR_{>0}$ and any arbitrary $\eta>0$, we can fix $\nu_1, \nu_2,\nu_3\in\RR_{>0}$ and $\varepsilon\in(0,m)$ such that $\nu<\eta$ for large enough values of $V$. Indeed, for any fixed $\varepsilon\in(0,m),T\in\RR_{>0}$ the other terms on the right-hand side of the above inequality tend to zero as $V$ goes to infinity. To show that $\nu$ can be made smaller than $\eta$, simply note that $\nu_1, \nu_2,\nu_3$ can be chosen as small as desired among the positive real numbers, $\delta^{V,\varepsilon,T}_1$ tends to zero as $V$ goes to infinity for all fixed $\varepsilon\in(0,m)$ by Assumption~\ref{ass:classical_limit}, and $\omega^{\varepsilon,T}$ tends to zero as $\varepsilon$ tends to zero because the functions $\lambda_{\trackS+y\to \trackS'+y'}$ are locally Lipschitz on $\track\times\RR^d_{>0}$ by Lemma~\ref{lem:convergence_hatlambda}.
\end{proof}
 
\subsection{Proof of Theorem~\ref{thm:weak_conv}}

 Note that under the assumptions of Theorem~\ref{thm:weak_conv}, for all $t\in[0,T]$ $Y^V(t)$ converges in probability to $Y(t)$ by Theorem~\ref{cor:fddconv}. Hence, in order to prove Theorem~\ref{thm:weak_conv}, we need to show relative compactness of $\{Y^V\}$ as a sequence of processes with sample paths in $D_{\track}[0,T]$, and conclude by \cite[Lemma A2.1]{DK:1996}, \edit{stated here for convenience.
 \begin{theorem}[Lemma A2.1 in \cite{DK:1996}]
  Consider a sequence of stochastic processes $\{U^V\}$ with sample paths in $D_{E}[0,T]$ defined on the same probability space. Suppose that $\{U^V\}$ is relatively compact in $D_{E}[0,T]$, (in the sense of convergence in distribution) and that for a dense set $H\subseteq [0,\infty)$, $\{U^V(t)\}$ converges in probability in $E$ for each $t\in H$. Then $\{U^V\}$ converges in probability in $D_{E}[0,T]$.
 \end{theorem}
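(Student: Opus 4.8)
The statement to be established is a general fact about the Skorokhod space, so the plan is to prove it from first principles using tightness, Prohorov's theorem, and the right-continuity of cadlag paths. Throughout I take $E$ to be a complete separable metric space (as in the application, where $E=\track$ is finite), so that $D_E[0,T]$ equipped with the Skorokhod $J_1$ metric $d_{J_1}$ is itself Polish. The plan is to exploit the fact that in a complete separable metric space convergence in probability is equivalent to being Cauchy in probability. Hence it suffices to prove that $\{U^V\}$ is Cauchy in probability, that is, $d_{J_1}(U^V,U^W)\to 0$ in probability as $V,W\to\infty$; a standard Borel--Cantelli and completeness argument then produces a limiting process $U$ with $d_{J_1}(U^V,U)\to 0$ in probability. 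This reduction crucially uses the hypothesis that all the $U^V$ live on one common probability space, so that $d_{J_1}(U^V,U^W)$ is a genuine random variable.

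I would argue by contradiction. If $\{U^V\}$ were not Cauchy in probability, there would exist $\varepsilon,\delta>0$ and index sequences $V_k,W_k\to\infty$ with $P(d_{J_1}(U^{V_k},U^{W_k})>\varepsilon)>\delta$ for all $k$. Relative compactness of $\{U^V\}$ means, by Prohorov's theorem, that the marginal laws are tight; consequently the laws of the pairs $(U^{V_k},U^{W_k})$ are tight on the product space $D_E[0,T]\times D_E[0,T]$ (a product of two compact sets, each capturing one coordinate with high probability, is compact). I work in this product with the product topology, which sidesteps the well-known difficulty that two cadlag paths with mismatched jumps need not behave well jointly in $D_{E\times E}[0,T]$. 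Passing to a further subsequence, I obtain a weak limit $(U^{V_k},U^{W_k})\Rightarrow(U,\tilde U)$.

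The heart of the argument is to show that this limit sits on the diagonal, i.e.\ $U=\tilde U$ almost surely. For each $t\in H$ the hypothesis gives that $U^V(t)$ converges in probability to a common limit, so $U^{V_k}(t)$ and $U^{W_k}(t)$ converge in probability to the \emph{same} random element; hence $(U^{V_k}(t),U^{W_k}(t))$ converges in distribution to a law supported on the diagonal of $E\times E$. On the other hand, the evaluation map $(f,g)\mapsto(f(t),g(t))$ is continuous at those pairs whose coordinates are both continuous at $t$, and the law of $(U,\tilde U)$ charges discontinuities at only countably many times. Thus for every $t$ in the dense set obtained by deleting this countable set from $H$, the mapping theorem gives $(U^{V_k}(t),U^{W_k}(t))\Rightarrow(U(t),\tilde U(t))$, forcing $U(t)=\tilde U(t)$ almost surely. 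Fixing a countable dense such set of times and invoking right-continuity of cadlag paths (for any $t$, pick $t_n\downarrow t$ in this set and pass to the limit), I conclude $U=\tilde U$ a.s., whence $d_{J_1}(U,\tilde U)=0$ a.s. Finally, since $d_{J_1}$ is continuous on the product space, the portmanteau theorem applied to the closed set $\{d_{J_1}\geq\varepsilon\}$ yields $P(d_{J_1}(U,\tilde U)\geq\varepsilon)\geq\limsup_k P(d_{J_1}(U^{V_k},U^{W_k})>\varepsilon)\geq\delta>0$, contradicting $d_{J_1}(U,\tilde U)=0$ a.s. This contradiction establishes Cauchyness in probability, and the reduction of the first paragraph completes the proof.

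The main obstacle is precisely this identification step: reconciling the finite-dimensional convergence in probability, which is available only at the dense set $H$ and in distribution is directly usable only at continuity times, with the need for \emph{pathwise} equality on all of $[0,T]$. The resolution rests on two facts that must be combined carefully: the discontinuity set of the weak limit is at most countable, so continuity times remain dense inside $H$, and a cadlag path is determined by its values on any dense set through right-continuity.
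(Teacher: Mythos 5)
Your overall architecture---reduce to Cauchyness in probability via completeness of $D_E[0,T]$, extract weak limits of pairs in the product topology, show the limit law sits on the diagonal, then conclude via continuity of $d_{J_1}$ on the product and the portmanteau theorem---is sound, and it is essentially the argument behind the cited result of Donnelly and Kurtz (the paper itself states the lemma without proof, so this is the right baseline). But the step you yourself identify as the heart of the argument contains a genuine gap: from ``the fixed-discontinuity set $C$ of the weak limit $(U,\tilde U)$ is countable'' you infer that $H\setminus C$ is dense. A dense set minus a countable set need not be dense: $H$ is only assumed dense, so it may itself be countable (e.g.\ $H=\mathbb{Q}\cap[0,T]$), and at that stage of the proof nothing prevents the subsequential limit from having a fixed discontinuity at \emph{every} point of $H$ in some subinterval---cadlag paths jumping at every rational, such as $t\mapsto\sum_{n:\,q_n\leq t}2^{-n}$, are perfectly admissible, and excluding this behaviour for the limit would be circular, since it amounts to assuming what you are proving. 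Consequently your argument delivers $U(t)=\tilde U(t)$ a.s.\ only for $t\in H\setminus C$, a set that may fail to meet some subinterval, and the right-continuity step that upgrades pointwise to pathwise equality collapses.

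The identification can be repaired, but it needs a local argument rather than deletion of $C$. For instance: by Skorokhod representation plus a diagonal extraction over a countable dense $H_0\subseteq H$, you may assume $U^{V_k}\to U$ and $U^{W_k}\to\tilde U$ a.s.\ in $J_1$ and $r(U^{V_k}(t),U^{W_k}(t))\to 0$ a.s.\ for every $t\in H_0$, where $r$ is the metric on $E$; it then suffices to prove the deterministic statement that $x_k\to x$, $y_k\to y$ in $J_1$ together with $r(x_k(t),y_k(t))\to 0$ on a dense set forces $x=y$. The key observations are that two distinct cadlag paths must differ at some \emph{common continuity point} $t_0$ (paths agreeing at all common continuity points agree on $[0,T)$ by right continuity), and that near such a $t_0$ both candidate values $x(h-)$ and $x(h)$ (likewise for $y$) lie within $\varepsilon$ of $x(t_0)$ (resp.\ $y(t_0)$) for every $h\in H_0$ close to $t_0$; since $J_1$ convergence confines $x_k(h)$ to a neighbourhood of $\{x(h-),x(h)\}$, evaluating at such an $h$ gives $r(x(t_0),y(t_0))=0$, a contradiction---and this works whether or not $h$ is a fixed-discontinuity time, which is exactly what your deletion argument could not guarantee. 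A secondary caveat: on the compact interval $[0,T]$ your prescription ``pick $t_n\downarrow t$ in the dense set'' is unavailable at $t=T$, and evaluation at $T$ is $J_1$-continuous while the hypothesis gives no information there unless $T\in H$; in the paper's application this is harmless because convergence in probability holds at every $t\in[0,T]$, but a complete proof of the lemma as transcribed should address the endpoint explicitly.
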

 }
 To prove relative compactness of $\{Y^V\}$, \edit{we use \cite[Corollary 7.4, Chapter 3]{EK:1986}, which we state here for convenience.
 \begin{theorem}[Corollary 7.4 in Chapter 3 of \cite{EK:1986}]
  Let $(E, r)$ be complete and separable, and let $\{U^V\}$ be a sequence of stochastic processes with sample paths in $D_E[O,T]$. Then $\{U^V\}$ is relatively compact if and only if the following two conditions hold:
  \begin{enumerate}
   \item For every $\varepsilon > 0$ and rational $t>0$, there exists a compact set $\Gamma_{\varepsilon, t}\subseteq E$ such that
   $$\liminf_{V\to\infty} P\left(U^V(t)\in \Gamma_{\varepsilon, t}\right)\geq 1-\varepsilon.$$
   \item For every $\varepsilon > 0$ and $T>0$, there exists $\delta>0$ such that
   $$\limsup_{V\to\infty} P\left(\inf_{\{s_i\}}\max_i \sup_{s,t \in [s_{i-1}, s_i)} r(U^V(s), U^V(t))\geq \varepsilon \right) \leq \varepsilon,$$
   where $\{s_i\}$ ranges over all time sequences of the form $0=s_0<s_1<\dots<s_{n-1}<T\leq s_n$ with $\min_{1\leq i\leq n} (s_i-s_{i-1})>\delta$ and $n\geq 1$.  
  \end{enumerate}
 \end{theorem}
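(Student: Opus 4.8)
The plan is to reduce the stated equivalence to two classical ingredients: Prohorov's theorem, which identifies relative compactness in distribution with tightness of the laws, and the Arzel\`a--Ascoli-type characterization of compact subsets of the Skorokhod space. Since $(E,r)$ is complete and separable, the space $D_E[0,T]$ equipped with the $J_1$ metric is itself complete and separable, so Prohorov's theorem applies: the sequence $\{U^V\}$ is relatively compact (in the sense of convergence in distribution) if and only if the family of laws $\{\mathcal L(U^V)\}$ is tight, i.e.\ for every $\eta>0$ there is a compact $K_\eta\subseteq D_E[0,T]$ with $\liminf_{V\to\infty}P(U^V\in K_\eta)\geq 1-\eta$. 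The whole argument then consists in matching tightness against the two conditions, using the deterministic fact that a set $A\subseteq D_E[0,T]$ has compact closure if and only if (i) for every $t$ in a dense subset of $[0,T]$ (including the endpoints) the set $\{x(t):x\in A\}$ is relatively compact in $E$, and (ii) $\lim_{\delta\downarrow 0}\sup_{x\in A}w'(x,\delta,T)=0$, where $w'(x,\delta,T)=\inf_{\{s_i\}}\max_i\sup_{s,t\in[s_{i-1},s_i)}r(x(s),x(t))$ with the infimum over exactly the partitions appearing in the statement.

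For the necessity direction I would argue as follows. Assume $\{U^V\}$ is relatively compact, hence tight, and fix $\varepsilon>0$; choose a compact $K_\varepsilon$ with $\liminf_V P(U^V\in K_\varepsilon)\geq 1-\varepsilon$. Applying part (i) of the deterministic criterion to $\overline{K_\varepsilon}$ yields, for each rational $t$, a compact $\Gamma_{\varepsilon,t}\subseteq E$ containing $x(t)$ for all $x\in K_\varepsilon$, which is the first condition. Applying part (ii) to $\overline{K_\varepsilon}$ produces a $\delta>0$ with $\sup_{x\in K_\varepsilon}w'(x,\delta,T)<\varepsilon$; since then $\{w'(U^V,\delta,T)\geq\varepsilon\}\subseteq\{U^V\notin K_\varepsilon\}$, taking $\limsup_V$ gives the second condition.

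For the sufficiency direction, which carries the substance, I would construct the required compact sets explicitly. Fix $\eta>0$. Using the second condition, choose $\delta>0$ so that the oscillation event has $\limsup$-probability at most $\eta/2$, and pick finitely many rational times $t_1<\dots<t_{k-1}$ of mesh at most $\delta$ in $(0,T)$, adjoining the endpoint $t_k=T$. For each $t_j$ use the first condition to select a compact $\Gamma_{\eta,t_j}$ with $\liminf_V P(U^V(t_j)\in\Gamma_{\eta,t_j})\geq 1-\eta/(2k)$, and set
$$K=\bigl\{x\in D_E[0,T]:\ w'(x,\delta,T)\leq\gamma,\ x(t_j)\in\Gamma_{\eta,t_j}\text{ for }1\leq j\leq k\bigr\}$$
for a suitable threshold $\gamma$. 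A union bound gives $\liminf_V P(U^V\in K)\geq 1-\eta$, and it remains to verify that $\overline K$ is compact via the deterministic criterion: condition (ii) holds on $K$ by construction, while condition (i) at an arbitrary time $t$ is obtained by interpolation, the modulus bound forcing $x(t)$ to lie within an $r$-distance controlled by $\gamma$ of some $x(t_j)\in\Gamma_{\eta,t_j}$, so that the time-$t$ projections of $K$ are totally bounded.

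The main obstacle is precisely this interpolation step: the first condition controls the marginals only at the finitely many chosen rational times, whereas relative compactness in $D_E[0,T]$ requires control of the projections at \emph{every} time, and the Skorokhod modulus $w'$ must bridge this gap. Making the bridge rigorous is delicate because the near-optimal partition in $w'$ depends on the path $x$, so the blocks $[s_{i-1},s_i)$ are not a fixed grid aligned with the $t_j$; one must argue that every $t$ falls in a block sharing a chosen grid point, handle the half-open convention and the behaviour at the right endpoint $T$ (where only a left limit is available), and account for the single admissible "bad" jump per block. Once the projections at all times are shown relatively compact and the uniform modulus bound is in place, the deterministic characterization delivers compactness of $\overline K$ and completes the equivalence.
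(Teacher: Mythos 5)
You should first note that the paper does not prove this statement at all: it is quoted verbatim, ``for convenience,'' as Corollary 7.4 in Chapter 3 of \cite{EK:1986}, so the only meaningful comparison is with the textbook argument of Ethier and Kurtz. Your overall strategy is exactly theirs: reduce relative compactness in distribution to tightness via Prohorov's theorem (valid since $E$ complete and separable makes $D_E[0,T]$ complete and separable under a metric inducing the $J_1$ topology), and then match tightness against the deterministic Arzel\`a--Ascoli-type characterization of compact subsets of $D_E[0,T]$ (their Theorem 6.3: compact containment of the time-$t$ sections plus $\lim_{\delta\downarrow0}\sup_{x\in A}w'(x,\delta,T)=0$). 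Your necessity direction is correct as written.

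The sufficiency direction, however, has a genuine gap, and it is not the one you flag. Your set $K=\{x:\ w'(x,\delta,T)\le\gamma,\ x(t_j)\in\Gamma_{\eta,t_j},\ 1\le j\le k\}$, built at a \emph{single} scale $(\delta,\gamma)$, does not have compact closure for any fixed $\gamma>0$, for two reasons. First, since $w'(x,\cdot,T)$ is nondecreasing, the constraint $w'(x,\delta,T)\le\gamma$ only yields $\limsup_{\delta'\downarrow0}\sup_{x\in K}w'(x,\delta',T)\le\gamma$, whereas the deterministic criterion demands this limit be $0$. Second, your claimed total boundedness of the time-$t$ sections fails: lying within $r$-distance of order $\gamma$ of the compact set $\bigcup_j\Gamma_{\eta,t_j}$ furnishes finite nets only at scales comparable to $\gamma$ and coarser, and the $\gamma$-neighbourhood of a compact set in a general complete separable metric space is not totally bounded (take the ball of radius $\gamma$ around a single point in an infinite-dimensional Hilbert space). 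The standard repair --- and what Ethier and Kurtz actually do in proving their Theorem 7.2, from which Corollary 7.4 follows --- is multi-scale: for each $m\in\ZZ_{\geq1}$ use condition 2 to pick $\delta_m$ with threshold $2^{-m}$, pick a rational grid of mesh at most $\delta_m$ and compacts $\Gamma$ at those grid points with probability budgets summing to $\eta\,2^{-m}$, form the corresponding set $K_m$, and take $K=\bigcap_m K_m$. Then every time-$t$ section of $K$ lies within $2^{-m}$ of a compact set for \emph{every} $m$, hence is totally bounded, and $\sup_{x\in K}w'(x,\delta_m,T)\le 2^{-m}\to0$, so the deterministic criterion applies, while a union bound still gives $\liminf_V P(U^V\in K)\ge 1-\eta$. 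By contrast, the interpolation worry you single out as the main obstacle is routine: every block $[s_{i-1},s_i)$ of an admissible partition has length exceeding $\delta_m$ and therefore contains a grid point $t_j$ whenever the grid mesh is at most $\delta_m$, so $r(x(t),x(t_j))$ is bounded by the block oscillation, with only the right endpoint $T$ needing the separate (and easy) treatment you mention.
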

 In our case, the topological space $\track$ with the distance induced by $\|\cdot\|_\infty$ is discrete, complete, and separable. It is also compact, so the first condition in the theorem above is always satisfied. Moreover, if a jump occurs at time $t$ then $\|Y^V(t-)-Y^V(t)\|_\infty=1$.} Let $t^V_i$ with $i\in\ZZ_{\geq1}$ denote the time of the $i$th jump of $Y^V$, let $t^V_0=0$, and let $T^V$ be the time of the last jump of $Y^V$ in $[0,T]$. \edit{ Then, as a direct consequence of the theorem above we can state that the sequence of stochastic processes $\{Y^V\}$ with sample paths in $D_{\track}[O,T]$ is relatively compact if and only if for all $\varepsilon>0$ there exists $\delta>0$ such that
 $$\limsup_{V\to\infty}P\left(\min_{j=1,\dots,T^V}(t^V_j-t^V_{j-1})\leq\delta\right)\leq \varepsilon.$$
 }
 Fix $\delta\in\RR_{>0}$ and for all $j\in\ZZ$ with $-1\leq j\leq T/\delta$ let $N^{V,\delta}_j$ be the number of jumps of $Y^V$ in the interval $[j/\delta, \min\{j/\delta+2\delta, T\}]$. The $N^{V,\delta}_j$ are introduced to control the time between jumps: whenever two jumps occur at times differing for less than $\delta$, there necessarily exists an interval $[j/\delta, \min\{j/\delta+2\delta, T\}]$ with $j\geq0$ containing both of them. Also, whenever the time of a jump is smaller than \edit{$\delta$}, then $N^{V,\delta}_{-1}\geq1$. Hence, for all $\nu\in\RR_{>0}$ with $\nu>m$,
 \begin{align*}
     P\left(\min_{j=1,\dots,T^V}(t^V_j-t^V_{j-1})\leq\delta\right)
     &\leq P\left(N^{V,\delta}_{-1}\geq 1\text{ or }\max_{j=1,\dots,\floor{T/\delta}}N^{V,\delta}_j\geq 2\right)\\
     &\leq P\left(N^{V,\delta}_{-1}\geq 1\right)+\sum_{j=1}^{\floor{T/\delta}} P(N^{V,\delta}_j\geq 2)\\
     &\leq P\left(\sup_{0\leq t\leq T}\left\|\frac{X^V}{V}(t)-Z(t)\right\|_\infty>\nu\right)
     +P(N^{\nu}(\delta)\geq 1)
     +\frac{T}{\delta}P(N^{\nu}(2\delta)\geq 2),
 \end{align*}
 where $N^{\nu}$ is a Poisson process with rate
 \begin{equation*}
  B_\nu=\sup_{N\in\ZZ_{\geq1}}\sup_{z\in\Omega_1^{\nu,T}}\max_{\tr{S}\in\track} \sum_{\tr{S}+y\to\tr{S}'+y'\in\trackRc}\lambda^V_{\tr{S}+y\to\tr{S}'+y'}(\tr{S}, \floor{Vz}),
 \end{equation*}
 which is finite by Lemma~\ref{lem:convergence_hatlambda}. Hence, by Theorem~\ref{thm:classical}
 \begin{equation*}
     \limsup_{V\to\infty}P\left(\min_{j=1,\dots,T^V}(t^V_j-t^V_{j-1})\leq\delta\right)
     \leq (1-e^{-\delta B_\nu})+\frac{T}{\delta}(1-e^{-2\delta B_\nu}-2\delta B_\nu e^{-2\delta B_\nu}),
 \end{equation*}
 which tends to 0 as $\delta$ tends to 0. \edit{The proof is completed.}
\bibliographystyle{plain}
\bibliography{greg_daniele_revised2}

\begin{thebibliography}{10}

\bibitem{agazzi2022large}
Andrea Agazzi, Luisa Andreis, Robert~IA Patterson, and DR~Michiel Renger.
\newblock Large deviations for markov jump processes with uniformly diminishing
  rates.
\newblock {\em Stochastic Processes and their Applications}, 152:533--559,
  2022.

\bibitem{agazzi2018large}
Andrea Agazzi, Amir Dembo, and Jean-Pierre Eckmann.
\newblock Large deviations theory for markov jump models of chemical reaction
  networks.
\newblock {\em The Annals of Applied Probability}, 28(3):1821--1855, 2018.

\bibitem{AK:2015}
D.~F. Anderson and T.~G. Kurtz.
\newblock {\em Stochastic analysis of biochemical systems}.
\newblock Springer, 2015.

\bibitem{anderson2020tier}
David~F Anderson, Daniele Cappelletti, Jinsu Kim, and Tung~D Nguyen.
\newblock Tier structure of strongly endotactic reaction networks.
\newblock {\em Stochastic Processes and their Applications},
  130(12):7218--7259, 2020.

\bibitem{tau2}
David~F Anderson, Arnab Ganguly, and Thomas~G Kurtz.
\newblock {Error analysis of tau-leap simulation methods}.
\newblock {\em The Annals of Applied Probability}, 21(6):2226 -- 2262, 2011.

\bibitem{diffusion2}
David~F Anderson, Desmond~J Higham, Saul~C Leite, and Ruth~J Williams.
\newblock On constrained langevin equations and (bio) chemical reaction
  networks.
\newblock {\em Multiscale Modeling \& Simulation}, 17(1):1--30, 2019.

\bibitem{caleb20}
Caleb~Deen Bastian and Grzegorz~A Rempala.
\newblock Throwing stones and collecting bones: Looking for poisson-like random
  measures.
\newblock {\em Mathematical Methods in the Applied Sciences}, 43(7):4658--4668,
  2020.

\bibitem{hosp22}
Wasiur Rahman~KhudaBukhsh Bukhsh, Caleb~D Bastian, Matthew Wascher, Colin
  Klaus, Saumya~Yashmohini Sahai, Mark~H Weir, Eben Kenah, Elisabeth Root,
  Joseph~H Tien, and Grzegorz~A Rempala.
\newblock {Projecting COVID-19 Cases and Subsequent Hospital Burden in Ohio}.
\newblock {\em medRxiv}, 2022.

\bibitem{di2022dynamic}
Francesco Di~Lauro, Wasiur~R KhudaBukhsh, Istv{\'a}n~Z Kiss, Eben Kenah, Max
  Jensen, and Grzegorz~A Rempa{\l}a.
\newblock Dynamic survival analysis for non-markovian epidemic models.
\newblock {\em Journal of the Royal Society Interface}, 19(191):20220124, 2022.

\bibitem{DK:1996}
Peter Donnelly and Thomas~G. Kurtz.
\newblock {A countable representation of the Fleming-Viot measure-valued
  diffusion}.
\newblock {\em The Annals of Probability}, 24(2):698 -- 742, 1996.

\bibitem{EK:1986}
Stewart~N. Ethier and Thomas~G. Kurtz.
\newblock {\em Markov processes: characterization and convergence}.
\newblock John Wiley \& Sons Inc, 1986.

\bibitem{tau}
Daniel~T Gillespie.
\newblock Approximate accelerated stochastic simulation of chemically reacting
  systems.
\newblock {\em The Journal of Chemical Physics}, 115(4):1716--1733, 2001.

\bibitem{kang2014central}
Hye-Won Kang, Thomas~G Kurtz, and Lea Popovic.
\newblock Central limit theorems and diffusion approximations for multiscale
  markov chain models.
\newblock {\em The Annals of Applied Probability}, 24(2):721--759, 2014.

\bibitem{prison21}
Wasiur KhudaBukhsh, Sat~Kartar Khalsa, Eben Kenah, Grzegorz Rempala, and
  Joseph~H Tien.
\newblock Covid-19 dynamics in an ohio prison.
\newblock {\em medRxiv}, 2021.

\bibitem{wasiur20}
Wasiur~R KhudaBukhsh, Boseung Choi, Eben Kenah, and Grzegorz~A Rempa{\l}a.
\newblock Survival dynamical systems: individual-level survival analysis from
  population-level epidemic models.
\newblock {\em Interface focus}, 10(1):20190048, 2020.

\bibitem{kurtz1972}
Thomas~G. Kurtz.
\newblock The relationship between stochastic and deterministic models for
  chemical reactions.
\newblock {\em The Journal of Chemical Physics}, 57(7):2976--2978, 1972.

\bibitem{kurtz1976limit}
Thomas~G Kurtz.
\newblock Limit theorems and diffusion approximations for density dependent
  markov chains.
\newblock In {\em Stochastic Systems: Modeling, Identification and
  Optimization, I}, pages 67--78. Springer, 1976.

\bibitem{diffusion}
Pavel Mozgunov, Marco Beccuti, Andras Horvath, Thomas Jaki, Roberta Sirovich,
  and Enrico Bibbona.
\newblock A review of the deterministic and diffusion approximations for
  stochastic chemical reaction networks.
\newblock {\em Reaction Kinetics, Mechanisms and Catalysis}, 123(2):289--312,
  2018.

\bibitem{N:1998}
James~R Norris.
\newblock {\em Markov chains}.
\newblock Cambridge university press, 1998.

\bibitem{prodhomme2020strong}
Adrien Prodhomme.
\newblock Strong gaussian approximation of metastable density-dependent markov
  chains on large time scales.
\newblock {\em arXiv preprint arXiv:2010.06861}, 2020.

\bibitem{israel22}
Ido Somekh, Wasiur~R KhudaBukhsh, Elisabeth~Dowling Root, Lital~Keinan Boker,
  Grzegorz Rempala, Eric~AF Sim{\~o}es, and Eli Somekh.
\newblock Quantifying the population-level effect of the covid-19 mass
  vaccination campaign in israel: a modeling study.
\newblock In {\em Open forum infectious diseases}, volume~9, page ofac087.
  Oxford University Press US, 2022.

\bibitem{S:2001}
E.D. Sontag.
\newblock Structure and stability of certain chemical networks and applications
  to the kinetic proofreading model of t-cell receptor signal transduction.
\newblock {\em IEEE Transactions on Automatic Control}, 46(7):1028 -- 1047,
  2001.

\bibitem{mat21}
Matthew Wascher, Patrick~M Schnell, Wasiur~R Khudabukhsh, Mikkel Quam, Joseph~H
  Tien, and Grzegorz~A Rempala.
\newblock Monitoring sars-cov-2 transmission and prevalence in populations
  under repeated testing.
\newblock {\em medRxiv}, 2021.

\end{thebibliography}
\end{document}